\newcommand{\Abb}[5]{\begin{array}{ccccc}#1&:&#2&\longrightarrow&#3\\{}&{}&#4&\longmapsto&#5\end{array}}
\newcommand{\ol}{\overline}
\newcommand{\ul}{\underline}
\newcommand{\nz}{\mathbb{N}}
\newcommand{\rz}{\mathbb{R}}
\newcommand{\rN}{\rz^N}
\newcommand{\rNtN}{\rz^{N\times N}}
\DeclareMathOperator{\p}{\partial}
\newcommand{\na}{\nabla}
\DeclareMathOperator{\ed}{d}
\DeclareMathOperator{\cd}{\delta}
\newcommand{\T}{\mathcal{T}}
\renewcommand{\S}{\mathcal{S}}
\renewcommand{\P}{\mathcal{P}}
\newcommand{\Q}{\mathcal{Q}}
\newcommand{\E}{\mathcal{E}}
\newcommand{\eps}{\varepsilon}
\newcommand{\om}{\Omega}
\newcommand{\equi}{\Leftrightarrow}
\def\impl{\Rightarrow}
\newcommand{\qequi}{\quad\equi\quad}
\newcommand{\qimpl}{\quad\impl\quad}
\DeclareMathOperator{\supp}{supp}
\DeclareMathOperator{\dist}{dist}
\newcommand{\dvec}[3]{\begin{bmatrix}#1\\#2\\#3\end{bmatrix}}
\DeclareMathOperator{\id}{id}
\def\set#1#2{\{#1\,:\,#2\}}
\def\bset#1#2{\big\{#1\,:\,#2\big\}}
\DeclareMathOperator{\Lebesgue}{\mathsf{L}}
\newcommand{\Lgen}[2]{\Lebesgue^{#1}_{#2}}
\def\Lt{\Lgen{2}{}}
\def\Ltom{\Lt(\om)}
\def\Ltqeps{\Lgen{2,q}{\eps}}
\def\Ltqepsom{\Lgen{2,q}{\eps}(\om)}
\def\LtqepsXi{\Lgen{2,q}{\eps}(\Xi)}
\def\L6{\Lgen{6}{}}
\newcommand{\qLt}[1]{\Lgen{2,#1}{}}
\newcommand{\Ltq}{\qLt{q}}
\newcommand{\LtNmq}{\qLt{N-q}}
\newcommand{\Ltqpo}{\qLt{q+1}}
\newcommand{\Ltqmo}{\qLt{q-1}}
\newcommand{\Ltqom}{\Ltq(\om)}
\newcommand{\Ltqpoom}{\Ltqpo(\om)}
\newcommand{\Ltqmoom}{\Ltqmo(\om)}
\DeclareMathOperator{\DSobolev}{\mathsf{D}}
\newcommand{\Dgen}[3]{\overset{#3}{\DSobolev}{}^{#1}_{#2}}
\newcommand{\Dgenc}[2]{\mathring\DSobolev{}^{#1}_{#2}}
\newcommand{\qD}[1]{\Dgen{#1}{}{}}
\newcommand{\qDz}[1]{\Dgen{#1}{0}{}}
\newcommand{\qDc}[1]{\Dgenc{#1}{}}
\newcommand{\qDcgn}[1]{\Dgenc{#1}{\gamma_{\nu}}}
\newcommand{\qDcgt}[1]{\Dgenc{#1}{\gamma_{\tau}}}
\newcommand{\qDcn}[1]{\Dgenc{#1}{\Gamma_{\nu}}}
\newcommand{\qDct}[1]{\Dgenc{#1}{\Gamma_{\tau}}}
\newcommand{\qDcz}[1]{\Dgenc{#1}{0}}
\newcommand{\qDczt}[1]{\Dgenc{#1}{\Gamma_{\tau},0}}
\newcommand{\qDczgt}[1]{\Dgenc{#1}{\gamma_{\tau},0}}
\newcommand{\qDczgn}[1]{\Dgenc{#1}{\gamma_{\nu},0}}
\newcommand{\Dq}{\qD{q}}
\newcommand{\DNmqmo}{\qD{N-q-1}}
\newcommand{\Dqmo}{\qD{q-1}}
\newcommand{\Dqk}{\qD{k,q}}
\newcommand{\Dqomt}{\qD{1,q-2}}
\newcommand{\Dqz}{\qDz{q}}
\newcommand{\Dqpoz}{\qDz{q+1}}
\newcommand{\Dqmoz}{\qDz{q-1}}
\newcommand{\Dqc}{\qDc{q}}
\newcommand{\Dqmoc}{\qDc{q-1}}
\newcommand{\Dqcz}{\qDcz{q}}
\newcommand{\Dqct}{\qDct{q}}
\newcommand{\Dqcn}{\qDcn{q}}
\newcommand{\Dqmoct}{\qDct{q-1}}
\newcommand{\Dqmocn}{\qDcn{q-1}}
\newcommand{\Dqcgt}{\qDcgt{q}}
\newcommand{\Dqcgn}{\qDcgn{q}}
\newcommand{\Dqmocgt}{\qDcgt{q-1}}
\newcommand{\Dqmocgn}{\qDcgn{q-1}}
\newcommand{\Dqczt}{\qDczt{q}}
\newcommand{\Dqmoczt}{\qDczt{q-1}}
\newcommand{\Dqczgt}{\qDczgt{q}}
\newcommand{\Dqczgn}{\qDczgn{q}}
\newcommand{\Dqom}{\Dq(\om)}
\newcommand{\Dqmoom}{\Dqmo(\om)}
\newcommand{\Dqzom}{\Dqz(\om)}
\newcommand{\Dqpozom}{\Dqpoz(\om)}
\newcommand{\Dqcom}{\Dqc(\om)}
\newcommand{\Dqmocom}{\Dqmoc(\om)}
\newcommand{\Dqczom}{\Dqcz(\om)}
\DeclareMathOperator{\cDSobolev}{\boldsymbol\DSobolev}
\newcommand{\cDgenc}[2]{\mathring\cDSobolev{}^{#1}_{#2}}
\newcommand{\cqDc}[1]{\cDgenc{#1}{}}
\newcommand{\cqDct}[1]{\cDgenc{#1}{\Gamma_{\tau}}}
\newcommand{\cqDcn}[1]{\cDgenc{#1}{\Gamma_{\nu}}}
\newcommand{\cqDcgn}[1]{\cDgenc{#1}{\gamma_{\nu}}}
\newcommand{\cqDczt}[1]{\cDgenc{#1}{\Gamma_{\tau},0}}
\newcommand{\cqDczn}[1]{\cDgenc{#1}{\Gamma_{\nu},0}}
\newcommand{\cqDczgn}[1]{\cDgenc{#1}{\gamma_{\nu},0}}
\newcommand{\cDqc}{\cqDc{q}}
\newcommand{\cDqmocgn}{\cqDcgn{q-1}}
\newcommand{\cDqct}{\cqDct{q}}
\newcommand{\cDqcn}{\cqDcn{q}}
\newcommand{\cDqmocn}{\cqDcn{q-1}}
\newcommand{\cDqczt}{\cqDczt{q}}
\newcommand{\cDqczn}{\cqDczn{q}}
\newcommand{\cDqcgn}{\cqDcgn{q}}
\newcommand{\cDqczgn}{\cqDczgn{q}}
\newcommand{\cDqpoczgn}{\cqDczgn{q+1}}
\newcommand{\cDqmoczn}{\cqDczn{q-1}}
\DeclareMathOperator{\DeSobolev}{\Delta}
\newcommand{\Degen}[3]{\overset{#3}{\DeSobolev}{}^{#1}_{#2}}
\newcommand{\Degenc}[2]{\mathring\DeSobolev{}^{#1}_{#2}}
\newcommand{\qDe}[1]{\Degen{#1}{}{}}
\newcommand{\qDec}[1]{\Degenc{#1}{}}
\newcommand{\qDez}[1]{\Degen{#1}{0}{}}
\newcommand{\qDecz}[1]{\Degenc{#1}{0}}
\newcommand{\qDecn}[1]{\Degenc{#1}{\Gamma_{\nu}}}
\newcommand{\qDecgn}[1]{\Degenc{#1}{\gamma_{\nu}}}
\newcommand{\qDeczn}[1]{\Degenc{#1}{\Gamma_\nu,0}}
\newcommand{\qDeczgn}[1]{\Degenc{#1}{\gamma_{\nu},0}}
\newcommand{\Deq}{\qDe{q}}
\newcommand{\Deqk}{\qDe{k,q}}
\newcommand{\Deqpo}{\qDe{q+1}}
\newcommand{\Deqc}{\qDec{q}}
\newcommand{\Deqpoc}{\qDec{q+1}}
\newcommand{\Deqz}{\qDez{q}}
\newcommand{\Deqmoz}{\qDez{q-1}}
\newcommand{\Deqcz}{\qDecz{q}}
\newcommand{\Deqom}{\Deq(\om)}
\newcommand{\Deqpoom}{\Deqpo(\om)}
\newcommand{\Deqcom}{\Deqc(\om)}
\newcommand{\Deqpocom}{\Deqpoc(\om)}
\newcommand{\Deqzom}{\Deqz(\om)}
\newcommand{\Deqmozom}{\Deqmoz(\om)}
\newcommand{\Deqczom}{\Deqcz(\om)}
\newcommand{\Deqcn}{\qDecn{q}}
\newcommand{\Deqcgn}{\qDecgn{q}}
\newcommand{\Deqpocgn}{\qDecgn{q+1}}
\newcommand{\Deqczn}{\qDeczn{q}}
\newcommand{\Deqpoczn}{\qDeczn{q+1}}
\newcommand{\Deqczgn}{\qDeczgn{q}}
\newcommand{\Deqpocn}{\qDecn{q+1}}
\DeclareMathOperator{\cDeSobolev}{\boldsymbol\DeSobolev}
\newcommand{\cDegenc}[2]{\mathring\cDeSobolev{}^{#1}_{#2}}
\newcommand{\cqDec}[1]{\cDegenc{#1}{}}
\newcommand{\cqDecn}[1]{\cDegenc{#1}{\Gamma_{\nu}}}
\newcommand{\cqDecgn}[1]{\cDegenc{#1}{\gamma_{\nu}}}
\newcommand{\cqDeczn}[1]{\cDegenc{#1}{\Gamma_\nu,0}}
\newcommand{\cqDeczgn}[1]{\cDegenc{#1}{\gamma_{\nu},0}}
\newcommand{\cDeqc}{\cqDec{q}}
\newcommand{\cDeqcn}{\cqDecn{q}}
\newcommand{\cDeqcgn}{\cqDecgn{q}}
\newcommand{\cDeqpocgn}{\cqDecgn{q+1}}
\newcommand{\cDeqczn}{\cqDeczn{q}}
\newcommand{\cDeqczgn}{\cqDeczgn{q}}
\DeclareMathOperator{\Sobolev}{\mathsf{H}}
\newcommand{\Hgen}[3]{\overset{#3}{\Sobolev}{}^{#1}_{#2}}
\newcommand{\Hgenc}[2]{\mathring\Sobolev{}^{#1}_{#2}}
\newcommand{\Hgentr}[2]{\mathring{\utilde{\Sobolev}}{}^{#1}_{#2}}
\def\Ho{\Hgen{1}{}{}}
\def\Hk{\Hgen{k}{}{}}
\def\Hoom{\Ho(\om)}
\def\Hkom{\Hk(\om)}
\def\Hoq{\Hgen{1,q}{}{}}
\def\Hoqmo{\Hgen{1,q-1}{}{}}
\def\Hoqpo{\Hgen{1,q+1}{}{}}
\def\Hoqmt{\Hgen{1,q-2}{}{}}
\def\Hoqom{\Hoq(\om)}
\def\Hoqmoom{\Hoqmo(\om)}
\def\Hoqpoom{\Hoqpo(\om)}
\def\Hkq{\Hgen{k,q}{}{}}
\def\Hkqpo{\Hgen{k,q+1}{}{}}
\def\Hkqmo{\Hgen{k,q-1}{}{}}
\def\Hkqom{\Hkq(\om)}
\def\Hoqmot{\Hgenc{1,q-1}{\gamma_\tau}}
\def\Hoqpon{\Hgenc{1,q+1}{\gamma_\nu}}
\def\Hoqcgn{\Hgenc{1,q}{\gamma_\nu}}
\def\Hoqcn{\Hgenc{1,q}{\Gamma_\nu}}
\def\Hoqct{\Hgenc{1,q}{\Gamma_\tau}}
\def\Hoqmocn{\Hgenc{1,q-1}{\Gamma_\nu}}
\def\Hoqmocgn{\Hgenc{1,q-1}{\gamma_\nu}}
\def\Hoqpocgn{\Hgenc{1,q+1}{\gamma_\nu}}
\def\Hoqmocgt{\Hgenc{1,q-1}{\gamma_\tau}}
\def\Hoqcttr{\Hgentr{1,q}{\Gamma_\tau}}
\def\Hoqmocgntr{\Hgentr{1,q-1}{\gamma_\nu}}
\DeclareMathOperator{\cSobolev}{\boldsymbol\Sobolev}
\newcommand{\cHgenc}[2]{\mathring\cSobolev{}^{#1}_{#2}}
\def\cHoqct{\cHgenc{1,q}{\Gamma_\tau}}
\DeclareMathOperator{\Cont}{\mathsf{C}}
\newcommand{\Cgen}[2]{\overset{#2}{\Cont}{}^{#1}}
\newcommand{\Cgenc}[1]{\mathring\Cont{}^{#1}}
\def\Czo{\Cgen{0,1}{}}
\def\Czoq{\Cgen{0,1,q}{}}
\def\Czoqct{\Cgenc{0,1,q}_{\Gamma_\tau}}
\def\Cic{\Cgenc{\infty}}
\def\Cicn{\Cic_{\Gamma_\nu}}
\def\Ciq{\Cgen{\infty,q}{}}
\def\Ciqc{\Cgenc{\infty,q}}
\def\CiNmqmoc{\Cgenc{\infty,N-q-1}}
\def\Citc{\Cgenc{\infty,2}}
\def\Ciqpoc{\Cgenc{\infty,q+1}}
\def\Ciqom{\Ciq(\om)}
\def\Ciqct{\Cgenc{\infty,q}_{\Gamma_\tau}}
\def\Ciqcn{\Cgenc{\infty,q}_{\Gamma_\nu}}
\def\Ciqpocn{\Cgenc{\infty,q+1}_{\Gamma_\nu}}
\def\Ciqmoct{\Cgenc{\infty,q-1}_{\Gamma_\tau}}
\DeclareMathOperator{\dirichlet}{\mathcal{H}}
\newcommand{\qharmdi}[2]{\dirichlet^{#1}_{#2}(\om)}
\newcommand{\harmdiq}{\qharmdi{q}{}}
\newcommand{\harmdiqeps}{\qharmdi{q}{\eps}}
\newcommand{\normdst}{\hspace{-0.4ex}}
\newcommand{\scp}[2]{\langle#1,#2\rangle}
\newcommand{\scpLtom}[2]{\scp{#1}{#2}_{\Ltom}}
\newcommand{\scpLtqom}[2]{\scp{#1}{#2}_{\Ltqom}}
\newcommand{\scpLtqpoom}[2]{\scp{#1}{#2}_{\Ltqpoom}}
\newcommand{\scpLtqmoom}[2]{\scp{#1}{#2}_{\Ltqmoom}}
\newcommand{\scpLtqXi}[2]{\scp{#1}{#2}_{\Ltq(\Xi)}}
\newcommand{\scpLtqepsom}[2]{\scp{#1}{#2}_{\Ltqepsom}}
\newcommand{\scpLtqepsXi}[2]{\scp{#1}{#2}_{\LtqepsXi}}
\newcommand{\scpLtqmoXi}[2]{\scp{#1}{#2}_{\Ltqmo(\Xi)}}
\newcommand{\norm}[1]{|#1|}
\newcommand{\norms}[1]{|\normdst|#1|\normdst|}
\newcommand{\normLtqom}[1]{\norm{#1}_{\Ltqom}}
\newcommand{\normLtqpoom}[1]{\norm{#1}_{\Ltqpoom}}
\newcommand{\normLtqmoom}[1]{\norm{#1}_{\Ltqmoom}}
\newcommand{\normLtqXi}[1]{\norm{#1}_{\Ltq(\Xi)}}
\newcommand{\normLtqepsXi}[1]{\norm{#1}_{\Ltqeps(\Xi)}}
\newcommand{\normLtqmoXi}[1]{\norm{#1}_{\Ltqmo(\Xi)}}
\newcommand{\normHoqpoxi}[1]{\norm{#1}_{\Hoqpo(\Xi)}}
\newcommand{\normHoqmoxi}[1]{\norm{#1}_{\Hoqmo(\Xi)}}
\newcommand{\normDqkom}[1]{\norm{#1}_{\Dqk(\om)}}
\newcommand{\normDqkrn}[1]{\norm{#1}_{\Dqk(\rz^N)}}
\newtheorem{lem}{Lemma}[section]
\newtheorem{defi}[lem]{Definition}
\newtheorem{theo}[lem]{Theorem}
\newtheorem{cor}[lem]{Corollary}
\newtheorem{rem}[lem]{Remark}
\def\undertilde#1{\mathord{\vtop{\ialign{##\crcr
$\hfil\displaystyle{#1}\hfil$\crcr\noalign{\kern1.5pt\nointerlineskip}
$\hfil\widetilde{}\hfil$\crcr\noalign{\kern1.5pt}}}}}
\def\wideundertilde#1#2{\mathord{\vtop{\ialign{##\crcr
$\hfil\displaystyle{#2}\hfil$\crcr\noalign{\kern2pt\nointerlineskip}
$\hfil\widetilde{\hspace*{#1mm}}\hfil$\crcr\noalign{\kern2pt}}}}}
\def\utilde#1{\undertilde{#1}}
\title[Weck's Selection Theorem]
{Weck's Selection Theorem:
The Maxwell Compactness Property
for Bounded Weak Lipschitz Domains
with Mixed Boundary Conditions in Arbitrary Dimensions}
\author{Sebastian Bauer}
\author{Dirk Pauly}
\author{Michael Schomburg}
\address{Fakult\"at f\"ur Mathematik,
Universit\"at Duisburg-Essen, Campus Essen, Germany}
\email[Sebastian Bauer]{sebastian.bauer.seuberlich@uni-due.de}
\email[Dirk Pauly]{dirk.pauly@uni-due.de}
\email[Michael Schomburg]{michael.schomburg@uni-due.de}
\keywords{Maxwell compactness property, weak Lipschitz domain,
Maxwell estimate, Helmholtz decomposition, electro-magneto statics,
mixed boundary conditions, vector potentials}
\subjclass{35A23, 35Q61}
\date{\today; {\it Corresponding Author}: Dirk Pauly}
\thanks{We cordially thank Immanuel Anjam for providing the graphics in this paper.}
\begin{document}


\begin{abstract}
It is proved that the space of differential forms with weak exterior- and co-derivative, 
is compactly embedded into the space of square integrable  differential forms. 
Mixed boundary conditions on weak Lipschitz domains are considered. 
Furthermore, canonical applications such as Maxwell estimates, 
Helmholtz decompositions and a static solution theory are proved.
As a side product and crucial tool for our proofs
we show the existence of regular potentials and regular decompositions as well.
\end{abstract}


\maketitle
\begin{center}
In Memoriam of our Dear Friend and Mentor Karl-Josef (Charlie) Witsch (1948-2017)
\end{center}
\vspace*{2mm}
\tableofcontents


\section{Introduction}

The aim of this contribution is to prove a compact embedding, 
so called ``Weck's selection theorem''
or (generalized) Maxwell compactness property \cite{weckmaxcomp,weckmax,picardcomimb}, 
of differential $q$-forms with weak exterior and co-derivative into the space of square integrable $q$-forms
subject to mixed boundary conditions on bounded weak Lipschitz domains $\om\subset\rN$, i.e.,
\begin{align*}
\Dqct(\om)\cap\eps^{-1}\Deqcn(\om)\hookrightarrow\Ltqom
\end{align*}
is compact.
The main result is given by Theorem \ref{satzMKE}.
Here $N\geq2$ and $0\leq q\leq N$
are natural numbers, the dimension of the domain $\om$
and the rank of the differential forms, respectively.
This generalises the results from \cite{bauerpaulyschomburgmaxcompweaklip}, 
where bounded  weak Lipschitz domains in the classical setting of $\rz^3$ were considered.
In fact, the results from \cite{bauerpaulyschomburgmaxcompweaklip} can be recovered by setting 
$N=3$ and $q=1$ or $q=2$.  

Similar results for strong Lipschitz domains in three dimensions can be found in \cite{jochmanncompembmaxmixbc,FeGi97}.
For a historical overview of the mathematical treatment of Weck's selection theorem (Maxwell compactness property) 
see \cite{bauerpaulyschomburgmaxcompweaklip,leisbook,picardweckwitschxmas} 
and the literature cited therein.
In particular, let us mention the important papers
\cite{weckmaxcomp,webercompmax,picardcomimb,costabelremmaxlip,witschremmax,jochmanncompembmaxmixbc,picardweckwitschxmas}. 
We emphasise that in \cite{witschremmax} Witsch was able to go even beyond 
Lipschitz regularity ($p$-cusps). In \cite{weckelacomp} Weck applied Witsch's ideas 
to the theory of elasticity.

The central role of compact embeddings of this type can for example be seen in connection with Hilbert space complexes,
where the compact embeddings immediately provide closed ranges, solution theories by continuous inverses, 
Friedrichs/Poincar\'e-type estimates,
and access to Hodge-Helmholtz-type decompositions, Fredholm theory, div-curl-type lemmas,
and a-posteriori error estimation, see \cite{paulyapostfirstordergen,paulydivcurl,paulyzulehnerbiharmonic}. 
In exterior domains, where local versions of the compact embeddings hold,
one obtains radiation solutions (scattering theory) with the help of
Eidus' limiting absorption principle \cite{eiduslabp,eiduslamp,eiduslamptwo}, 
see \cite{paulydiss,paulytimeharm,paulystatic,paulydeco,paulyasym,paulypoly}.
We elaborate on some of these applications in our Section \ref{sectApplications}.

Finally we note that by the same arguments as in \cite{picardcomimb} our results extend to Riemannian manifolds. 

\section{Notations, Preliminaries and Outline of the Proof}

Let $ \om\subset\rN $ be a bounded weak Lipschitz domain. For a precise definition of weak Lipschitz domains, 
see Definitions \ref{defilipmani} and \ref{defilipsubmani}. 
In short, $ \om$ is an $N$-dimensional $\Czo$-submanifold of $\rN$ with boundary, i.e., a manifold with Lipschitz atlas. 
Let $ \Gamma := \partial \om $, which is itself an $(N-1)$-dimensional Lipschitz-manifold without boundary, 
consist of two relatively open subsets $\Gamma_\tau$ and $\Gamma_\nu$ 
such that $\overline\Gamma_\tau \cup \overline\Gamma_\nu = \Gamma$ and $\Gamma_{\tau}\cap\Gamma_{\nu}=\emptyset$. 
The separating set $ \overline\Gamma_\tau \cap \overline\Gamma_\nu $ (interface)
will be assumed to be a, not necessarily connected, $(N-2)$-dimensional Lipschitz-submanifold of $\Gamma$. 
We shall call $(\om, \Gamma_{\tau})$ a weak Lipschitz pair.

We will be working in the framework of alternating differential forms, see for example \cite{Janich2001VA}.
The vector space $\Ciqc(\om)$ is defined as the subset of $\Ciq(\om)$, the set of smooth alternating differential forms of rank $q$, having compact support in $\om$. Together with the inner product  
$$
\scpLtqom{E}{H} := \int_\om E \wedge \star H
$$
it is an inner product space\footnote{For simplicity we work in a real Hilbert space setting.}. We may then define $\Ltqom$ as the completion of $\Ciqc(\om)$ with respect to the corresponding norm.
$\Ltqom$ can be identified with those q-forms having $\Lt$-coefficients with respect to any coordinate system.
Using the weak version of Stokes' theorem
\begin{align}
\label{defweakderi}
\scpLtqpoom{\ed E}{H} = -\scpLtqom{E}{\cd H}, \qquad E\in\Ciqc(\om),\,H\in\Ciqpoc(\om),
\end{align}
weak versions of the exterior derivative and co-derivative can be defined. 
Here $\ed$ is the exterior derivative, $\cd=(-1)^{N(q-1)}\star\ed\star$ the co-derivative
and $\star$ the Hodge-star-operator on $\om$.
We thus introduce the Sobolev (Hilbert) spaces (equipped with their natural graph norms)
\begin{align*}
\Dq(\om):=\left\{E\in\Ltqom: \ed E \in\Ltqpoom\right\},\quad \Deq(\om):=\left\{E\in\Ltqom: \cd E \in\Ltqmo(\om)\right\}
\end{align*}
in the distributional sense. It holds
$$\star\Dqom=\Degen{N-q}{}{}(\om),\qquad \star\Deqom=\Dgen{N-q}{}{}(\om).$$

We further define the test forms
\begin{align*}
\Ciqct(\om) :=\bset{\varphi|_{\om}}{\varphi\in\Ciqc(\rN),~ \dist(\supp \varphi, \Gamma_\tau) > 0}
\end{align*}
and note that $ \Ciqc_\emptyset(\om)= \Ciq(\ol\om) $.
We now take care of boundary conditions. 
First we introduce strong boundary conditions as closures of test forms by
\begin{align}
\label{defstark}
\Dqct(\om) :=\overline{\Ciqct(\om)}^{\Dq(\om)}, \quad 
\Deqcn(\om) :=\overline{\Ciqcn(\om)}^{\Deq(\om)}.
\end{align}
For the full boundary case $ \Gamma_\tau = \Gamma $ (resp. $ \Gamma_\nu = \Gamma $) we set
$$\Dqc(\om) := \Dqct(\om),\quad \Deqc(\om) := \Deqcn(\om).$$
Furthermore, we define weak boundary conditions in the spaces 
\begin{align}
\label{defschwach}
\begin{split}
\cDqct(\om) &:=\bset{E\in\Dq(\om)}{\scpLtqom{E}{\cd \varphi} = -\scpLtqpoom{\ed E}{\varphi}~\text{for all}~ \varphi\in\Ciqpocn(\om)},\\
\cDeqcn(\om) &:=\bset{H\in\Deq(\om)}{\scpLtqom{H}{\ed\varphi} = -\scpLtqmoom{\cd H}{\varphi}~\text{for all}~ \varphi\in\Ciqmoct(\om)},
\end{split}
\end{align}
and again for $ \Gamma_\tau = \Gamma $ (resp. $ \Gamma_\nu = \Gamma $) we set
$$\cDqc(\om) := \cDqct(\om),\quad \cDeqc(\om) := \cDeqcn(\om).$$
We note that in definitions \eqref{defweakderi} and \eqref{defstark} the smooth test forms can by mollification be replaced by their respective Lipschitz continuous counterparts, e.g. $\Ciqct(\om)$ can be replaced by $\Czoqct(\om)$. Similarly, in definition \eqref{defschwach} the smooth test forms can by completion be replaced by their respective closures, i.e., $\Ciqpocn(\om)$ and $\Ciqmoct(\om)$  can be replaced by $\Deqpocn(\om)$ and $\Dqmoct(\om)$, respectively.
In \eqref{defstark} and \eqref{defschwach} homogeneous tangential and normal traces on $ \Gamma_\tau $, respectively $ \Gamma_\nu $, are generalised.
Clearly
$$\Dqct(\om)\subset\cDqct(\om),\quad\Deqcn(\om)\subset\cDeqcn(\om)
$$
and it will later be shown that in fact equality holds under our regularity assumptions on the boundary. In case of full boundary conditions the equality even holds without any assumptions on the regularity of the boundary, as can be seen by a short functional analytic argument, see \cite{bauerpaulyschomburgmaxcompweaklip}, but which is unavailable for the mixed boundary case.

We define the closed subspaces
\begin{align*}
\Dqz(\om):=\bset{E\in\Dq(\om)}{\ed E = 0},\quad \Deqz(\om):=\bset{E\in\Deq(\om)}{\cd E = 0}
\end{align*}
as well as $ \Dqczt(\om) := \Dqct(\om) \cap \Dqz(\om) $ and $\Deqczn(\om) := \Deqcn(\om) \cap \Deqz(\om) $.
Analogously for the weak spaces
\begin{align*}
\cDqczt(\om) := \cDqct(\om) \cap \Dqz(\om) ,\quad \cDeqczn(\om) := \cDeqcn(\om) \cap \Deqz(\om).
\end{align*}

In addition to the latter canonical Sobolev spaces
we will also need the classical Sobolev spaces for the Euclidean components of $q$-forms.
Note that $\om$, together with the global identity chart, is an $N$-dimensional Riemannian manifold.
In particular, $q$-forms $E\in\Ltqom$ can be represented  globally 
in Cartesian coordinates by their components $E_I$, i.e., $E = \sum_{I} E_I \mathrm{d} x^I$.
Here we use the ordered multi index notation $\mathrm{d}x^I = \mathrm{d}x^{i_1}\wedge 
\cdot\cdot\cdot\wedge\mathrm{d}x^{i_q}$ for $I=(i_1,...,i_q)\in\{1,...,N\}^q$.
The inner product for $E,H\in\Ltqom$ is given by 
\begin{align*}
\scpLtqom{E}{H} 
&= \int_{\om} E\wedge\star H 
= \sum_I \int_{\om} E_I H_I 
= \sum_I \scpLtom{E_I}{H_I} 
= \scpLtom{\vec E}{\vec H},
\end{align*}
where we introduce the vector proxy notation 
$$\vec E =[E_I]_I \in \Lt(\om;\rz^{N_q}),\quad N_q := \binom{N}{q}.$$
For $k\in\nz$ 
we can now define the Sobolev space $\Hkqom$ as the subset of $\Ltqom$ having each component $E_I$ in $\Hkom$.
In these cases, we have for $\norm{\alpha} \leq k$
$$\partial^\alpha E 
= \sum_I \partial^\alpha E_I \mathrm{d}x^I \quad\text{and}\quad \scp{E}{H}_{\Hkqom} 
:= \sum_{0\leq\norm{\alpha}\leq k}\scpLtqom{\partial^\alpha E}{\partial^\alpha H}$$
and we use the vector proxy notation also for the gradient, i.e.,
$$\na\vec E = [\partial_n E_I]_{n,I} = [...\na E_I...]_I \in \Lt(\om;\rz^{N\times N_q}).$$
In particular, for $E,H\in\Hoqom$
\begin{align*}
\scp{E}{H}_{\Hoqom}
&=\scpLtqom{E}{H}+\sum_{n=1}^N \scpLtqom{\partial_n E}{\partial_n H}
=\sum_I\big(\int_\om E_I H_I + \sum_n \int_\om \partial_n E_I \partial_n H_I\big)\\
&=\sum_I(\scpLtom{E_I}{H_I} + \scpLtom{\na E_I}{\na H_I}) 
= \scpLtom{\vec E}{\vec H} + \scpLtom{\na\vec E}{\na\vec H}
= \scp{\vec E}{\vec H}_{\Hoom}.
\end{align*}
Boundary conditions for $\Hoqom$-forms can again be defined strongly and weakly, i.e., by closure
\begin{align*}
\Hoqct(\om) := \overline{\Ciqc_{\Gamma_{\tau}}(\om)}^{\Hoqom}
\end{align*}
and by integration by parts
$$\cHoqct(\om) :=\bset{E\in\Hoqom}{\scpLtom{E_{I}}{\p_{n}\phi} = -\scpLtom{\p_{n}E_{I}}{\phi}
~\text{for all}~ n,I~\text{and all}~\phi\in\Cicn(\om)},$$
respectively. 
Le us also introduce the following Sobolev type spaces
\begin{align*}
\Dqk(\om):&=\bset{E\in\Hkq(\om)}{ \ed E \in\Hkqpo(\om)}, \\ 
\Deqk(\om):&=\bset{E\in\Hkq(\om)}{ \cd E \in\Hkqmo(\om)}.
\end{align*}

\begin{rem}
We emphasise that by switching $ \Gamma_\tau $ and $ \Gamma_\nu $ 
we can define the respective boundary conditions on the other part of the boundary as well.
Moreover, all definitions of our spaces extend literally
to any open subset $\om\subset\rN$ and any relatively open complementary boundary pairs $\Gamma_\tau$ and $\Gamma_\nu$.
\end{rem}

Finally we introduce our transformations $\eps$.

\begin{defi}
\label{defitrafo}
A transformation $\eps:\Ltqom\to\Ltqom$ will be called admissible,
if $\eps$ is bounded, symmetric, and uniformly positive definite.
More precisely, $\eps$ is a self-adjoint operator on $\Ltqom$ 
and there exists $\ul{\eps},\ol{\eps}>0$ such that for all $E\in\Ltqom$
$$\ul{\eps}\normLtqom{\eps E}\leq\normLtqom{E}\leq\ol{\eps}\sqrt{\scpLtqom{\eps E}{E}}.$$ 
\end{defi}

\subsection{Lipschitz Domains}

Let $\om\subset\rN$ be a bounded domain with boundary $\Gamma:=\p\!\om$.
We introduce the setting we will be working in. Define (cf. Figure \ref{fig:cube})
\begin{align*}
I&:=(-1,1),&
B&:=I^N\subset\rN, &B_{\pm}&:=\set{x\in B}{\pm x_N>0}, &
B_0&:=\set{x\in B}{x_N = 0}, \\
&&&&B_{0,\pm}&:=\set{x\in B_0}{\pm x_1 > 0 },&
B_{0,0}&:=\set{ x\in B_0 }{ x_1 = 0 }.
\end{align*}

\begin{defi}[weak Lipschitz domain]
\label{defilipmani}
$ \om $ is called weak Lipschitz, if the boundary $ \Gamma $ is a Lipschitz submanifold 
of the manifold $\ol{\om}$, i.e., 
there exist a finite open covering $ U_1,\dots, U_K\subset \rN $ of $ \Gamma $ 
and vector fields $ \phi_k : U_k \rightarrow B$, such that for $ k = 1,\dots, K $ 
\begin{itemize}
\item[\bf(i)] 
$ \phi_k \in \Czo(U_k,B) $ is bijective
and $ \psi_k := \phi_k^{-1} \in \Czo(B,U_k) $,
\item[\bf(ii)] 
$ \phi_k(U_k \cap \om) = B_{-}$
\end{itemize}
hold.
\end{defi}

\begin{rem}
For $ k = 1,\dots, K $ we have
$\phi_k (U_k \setminus \ol{\om}) = B_+ $ and $\phi_k (U_k \cap \Gamma) = B_0$.
\end{rem}

\begin{defi}[weak Lipschitz domain and weak Lipschitz interface]
\label{defilipsubmani}
Let $ \om $ be weak Lipschitz. A relatively open subset
$ \Gamma_\tau $ of $ \Gamma $ is called weak Lipschitz, 
if $ \Gamma_\tau $ is a Lipschitz submanifold of $ \Gamma $, i.e., 
there are an open covering $ U_1,\dots, U_K \subset \rN $ of $ \Gamma $ 
and vector fields $ \phi_k := U_k \rightarrow B $, 
such that for $ k = 1,\dots, K $ and in addition to (i), (ii) 
in Definition \ref{defilipmani} one of 
\begin{itemize}
\item[\bf(iii)] 
$ U_k \cap \Gamma_\tau = \emptyset$,
\item[\bf(iii$'$)] 
$ U_k \cap \Gamma_\tau = U_k \cap \Gamma \qimpl \phi_k(U_k \cap \Gamma_\tau) = B_{0} $,
\item[\bf(iii$''$)] 
$ \emptyset \neq U_k \cap \Gamma_\tau \neq U_k \cap \Gamma \qimpl \phi_k(U_k \cap \Gamma_\tau) = B_{0,-} $
\end{itemize}
holds. We define $ \Gamma_\nu := \Gamma \setminus \ol\Gamma_\tau $ 
to be the relatively open complement of $\Gamma_\tau$.
\end{defi}

\begin{defi}[weak Lipschitz pair]
A pair $ (\om,\Gamma_\tau) $ conforming to Definitions \ref{defilipmani} and
\ref{defilipsubmani} will be called weak Lipschitz.
\end{defi}

\begin{rem}
If $(\om,\Gamma_{\tau})$ is weak Lipschitz, so is $(\om,\Gamma_{\nu})$.
Moreover, for the cases (iii), (iii$'$) and (iii$''$)in Definition \ref{defilipsubmani} we further have
\begin{itemize}
\item[\bf(iii)] 
$ U_k \cap \Gamma_\tau = \emptyset \;\impl\; U_k \cap \Gamma_\nu = U_k \cap \Gamma \;\impl\; 
\phi_k(U_k \cap \Gamma_\nu) = B_{0}$,
\item[\bf(iii$'$)] 
$ U_k \cap \Gamma_\tau = U_k \cap \Gamma \;\impl\; U_k \cap \Gamma_\nu = \emptyset $,
\item[\bf(iii$''$)] 
$ \emptyset \neq U_k \cap \Gamma_\tau \neq U_k \cap \Gamma \;\impl \;
\emptyset \neq U_k \cap \Gamma_\nu \neq U_k \cap \Gamma \;\impl\; \phi_k(U_k \cap \Gamma_\nu) = B_{0,+} $
and $ \phi_k(U_k \cap \ol{\Gamma}_\tau \cap \ol{\Gamma}_\nu) = B_{0,0} $.
\end{itemize}
\end{rem}

In the literature the notion of a Lipschitz domain $\om\subset\rN$ 
is often used for a strong Lipschitz domain. For this let us define for $x\in\rN$
$$x':=(x_1,x_2,\dots,x_{N-1}),\quad
x'':=(x_2,\dots,x_{N-1}).$$

\begin{defi}[strong Lipschitz domain]
\label{defistrlip}
$ \om $ is called strong Lipschitz, 
if there are an open covering $ U_1,\dots, U_K \subset \rN $ of $\Gamma$,
rigid body motions $ R_k = A_k + a_k $, $ A_k\in\rNtN $ orthogonal, 
$ a_k \in \rN $, and $ \xi_k \in \Czo(I^{N-1},I)$, 
such that for $ k = 1,\dots, K $
\begin{itemize}
\item[\bf(i)]
$R_k(U_k \cap \om) = \bset{ x \in B }{ x_N < \xi_k(x') }$.
\end{itemize}
\end{defi}

\begin{rem}
For $ k = 1,\dots, K $ we have 
$$R_k(U_k \setminus \ol{\om}) = \bset{ x \in B}{x_N > \xi_k (x')},\quad
R_k(U_k \cap \Gamma) = \bset{ x \in B}{x_N = \xi_k (x')}.$$
\end{rem}

\begin{defi}[strong Lipschitz domain and strong Lipschitz interface]
\label{defistrlipinterface}
Let $ \om $ be strong Lipschitz. A relatively open subset
$ \Gamma_\tau $ of $ \Gamma $ is called strong Lipschitz, 
if there exist an open covering $ U_1,\dots, U_K \subset \rN $ of $ \Gamma $,
rigid body motions $R_k$,
and $ \xi_k \in \Czo(I^{N-1},I)$, $ \zeta_k \in \Czo(I^{N-2},I) $,
such that for $ k = 1,\dots, K $ and in addition to (i) 
in Definition \ref{defistrlip} one of 
\begin{itemize}
\item[\bf(ii)] 
$ U_k \cap \Gamma_\tau = \emptyset$,
\item[\bf(ii$'$)] 
$ U_k \cap \Gamma_\tau = U_k \cap \Gamma 
\qimpl 
R_k(U_k \cap \Gamma_{\tau}) = \bset{ x \in B}{x_N = \xi_k (x')}$,
\item[\bf(ii$''$)] 
$ \emptyset \neq U_k \cap \Gamma_\tau \neq U_k \cap \Gamma 
\qimpl
R_k(U_k \cap \Gamma_\tau) 
= \bset{ x\in B }{ x_N = \xi_k(x'),~ x_1 < \zeta_k (x'') }$
\end{itemize}
holds. We define $ \Gamma_\nu := \Gamma \setminus \ol\Gamma_\tau $ 
to be the relatively open complement of $\Gamma_\tau$.
\end{defi}

\begin{defi}[strong Lipschitz pair]
A pair $ (\om,\Gamma_\tau) $ conforming to Definitions \ref{defistrlip} and
\ref{defistrlipinterface} will be called strong Lipschitz.
\end{defi}

\begin{rem}
If $(\om,\Gamma_{\tau})$ is strong Lipschitz, so is $(\om,\Gamma_{\nu})$.
Moreover, for the cases (ii), (ii$'$) and (ii$''$) in Definition \ref{defistrlipinterface} 
we further have
\begin{itemize}
\item[\bf(ii)] 
$ U_k \cap \Gamma_\tau = \emptyset 
\;\impl\; 
U_k \cap \Gamma_\nu = U_k \cap \Gamma 
\;\impl\; 
R_k(U_k \cap \Gamma_{\nu}) = \bset{ x \in B}{x_N = \xi_k (x')}$,
\item[\bf(ii$'$)] 
$ U_k \cap \Gamma_\tau = U_k \cap \Gamma \;\impl\; U_k \cap \Gamma_\nu = \emptyset $,
\item[\bf(ii$''$)] 
$ \emptyset \neq U_k \cap \Gamma_\tau \neq U_k \cap \Gamma 
\;\impl \;
\emptyset \neq U_k \cap \Gamma_\nu \neq U_k \cap \Gamma 
\;\impl\; $
\begin{align*}
R_k(U_k \cap \Gamma_\nu) &= \bset{ x \in B }{ x_N = \xi_k (x'),~ x_1 > \zeta_k (x'') }, \\
R_k(U_k \cap \ol{\Gamma}_\tau \cap \ol{\Gamma}_\nu) &= \bset{ x \in B }{ x_N = \xi_k (x'),~ x_1 = \zeta_k (x'') }.
\end{align*}
\end{itemize}
\end{rem}

\begin{rem}
The following holds:
\begin{itemize}
\item[\bf(i)]  
$ \om $ strong Lipschitz $ \qimpl $ $ \om $ weak Lipschitz
\item[\bf(ii)] 
$ (\om,\Gamma_\tau) $ strong Lipschitz pair $ \qimpl $ $ (\om,\Gamma_\tau) $ weak Lipschitz pair
\end{itemize}
For a proof just define $\phi_{k}:=\varphi_{k}\circ R_{k}$ with $\varphi_{k}:U_{k}\to B$ given by
$$\varphi_{k}(x):=\dvec{x_{1}-\zeta_{k}(x'')}{x''}{x_{N}-\xi_{k}(x')}.$$
Note that the contrary does not hold as the implicit function theorem
is not available for Lipschitz maps.
\end{rem}

For later purposes we introduce special notations for the half-cube domain
\begin{align}
\label{halfcube}
\Xi:=B_{-},\quad \gamma:=\p\Xi
\end{align} 
and its relatively open boundary parts $\gamma_{\tau}$ and $\gamma_{\nu}:=\gamma\setminus\ol{\gamma_{\tau}}$.
We will only consider the cases 
\begin{align}
\label{halfcubegn}
\gamma_{\nu}=\emptyset,\quad\gamma_{\nu}=B_{0},\quad\gamma_{\nu}=B_{0,+}
\end{align}
and we note that $\Xi$ and $\gamma$, $\gamma_\tau$, $\gamma_\nu$ are strong Lipschitz, see Figure \ref{figureone}.

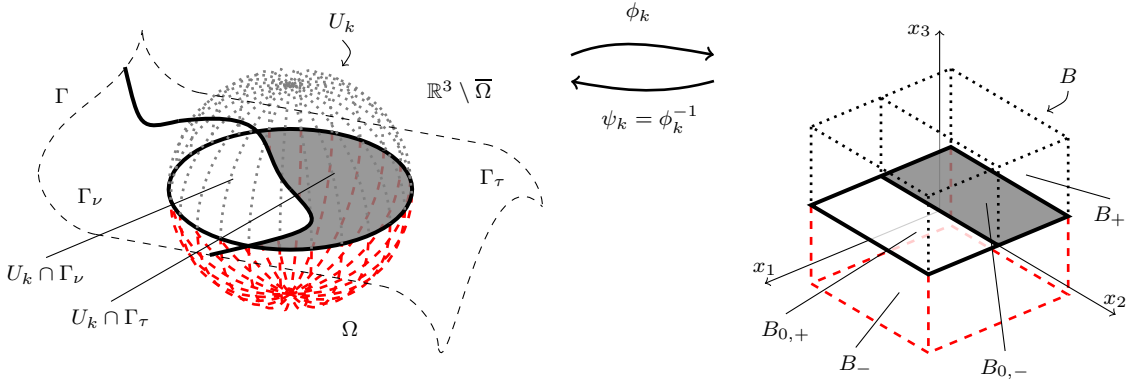
\begin{figure}
\centering
{\footnotesize		
\tdplotsetmaincoords{60}{20}
\begin{tikzpicture}[scale=0.4,tdplot_main_coords]
	\coordinate (O) at (0,0,0);		
	\def\Rad{4}				
	\foreach \angle in {0,15,...,165}{
		\tdplotsetthetaplanecoords{\angle}
		\tdplotdrawarc[red,dashed,line width=1pt,tdplot_rotated_coords]{(O)}{\Rad}{90}{270}{}{}
	}
	\fill [gray,opacity=.8] (O) circle (\Rad);
    	\fill [white] (0,0,0) -- (-1,1.3,0) -- (-2.35,3.3,0) -- (-3.3,2.3,0) -- (-3.95,0.9,0) -- (-3.95,-0.5,0) -- (-3.95,-0.9,0) -- (-3.3,-2.3,0) -- (-2.35,-3.3,0) -- (-1.1,-3.9,0) -- (0,-4,0) -- (1.3,-2.5,0) -- (1.4,-1.9,0) -- (1.1,-1.1,0) -- cycle;
	\draw [line width=1.5pt] (O) circle (\Rad);
	\foreach \angle in {-90,-75,...,75}{
		\tdplotsetthetaplanecoords{\angle}
		\tdplotdrawarc[gray,dotted,line width=1pt,tdplot_rotated_coords]{(O)}{\Rad}{-90}{90}{}{}
	}
	\draw [dashed] (-5,-5,0) -- (5,-5,0);
	\draw [dashed] (-5,-5,0) .. controls (-6,-5,0) and (-7,-5,1) .. (-7,-5,2);	
	\draw [dashed] (5,-5,0) .. controls (6,-5,0) and (7,-5,-1) .. (7,-5,-2);	
	\draw [dashed] (-5,5,0) -- (5,5,0);
	\draw [dashed] (-5,5,0) .. controls (-6,5,0) and (-7,5,1) .. (-7,5,2);	
	\draw [dashed] (5,5,0) .. controls (6,5,0) and (7,5,-1) .. (7,5,-2);		
	\draw [dashed] (-7,-5,2) .. controls (-7,-4,4) and (-7,4,0) .. (-7,5,2);	
	\draw [dashed] (7,-5,-2) .. controls (7,-4,-4) and (5,5,0) .. (7,5,-2);	
	\draw [line width=1.5pt] (-6.7,2.5,2) .. controls (-6,2,0) .. (-5,3,0);
	\draw [line width=1.5pt] (-5,3,0) .. controls (-2,5,0) and (-2,2,0) .. (0,0,0);
	\draw [line width=1.5pt] (0,0,0) .. controls (2,-2,0) .. (-1,-5,0);
	\draw (5,-8,0) 		node [] {$\Omega$};							
	\draw (3,8,0) 		node [] {$\mathbb{R}^3 \setminus \ol\Omega$};
	\draw (-8,0,2) 		node [] {$\Gamma$};							
	\draw (-6,-2.7,0) 	node [] {$\Gamma_\nu$};
	\draw (6,3,0) 		node [] {$\Gamma_\tau$};
	\draw [black,->] (0,5,3) node[anchor=south]{$U_k$} .. controls (0,6,2) .. (0,5,2);
	\draw [black] (-3,-9,0) node[anchor=north]{$U_k \cap \Gamma_\tau$} -- (1,1.5,0);
	\draw [black] (-6,-7,0) node[anchor=north]{$U_k \cap \Gamma_\nu$} -- (-2,0,0);
	\draw [black,line width=1pt,->] (8,5,4) .. controls (9.5,5,5) and (11,5,5) .. (13,5,5);
	\draw (10,6,5.5) node [] {$\phi_{k}$};
	\draw [black,line width=1pt,<-] (8,5,3) .. controls (9.5,5,3) and (11,5,3) .. (13,5,4);
	\draw (10.3,6.3,1.3) node [] {$\psi_{k} = \phi_{k}^{-1}$};
\end{tikzpicture}
\tdplotsetmaincoords{60}{140}
\begin{tikzpicture}[scale=1.2,tdplot_main_coords]
	\draw [black] (0,0,0) -- (1,0,0);
	\draw [black] (0,0,0) -- (0,1,0);
	\draw [gray] (0,0,0) -- (0,0,1);
	\draw [red,dashed,line width=1pt] (1,1,-1) -- (1,-1,-1) -- (-1,-1,-1) -- (-1,1,-1) -- cycle;	
	\draw [red,dashed,line width=1pt] (1,1,0) -- (1,1,-1);							
	\draw [red,dashed,line width=1pt] (1,-1,0) -- (1,-1,-1);							
	\draw [red,dashed,line width=1pt] (-1,-1,0) -- (-1,-1,-1);						
	\draw [red,dashed,line width=1pt] (-1,1,0) -- (-1,1,-1);							
	\fill [gray,opacity=.8] (0,1,0) -- (-1,1,0) -- (-1,-1,0) -- (0,-1,0) -- cycle;				
	\fill [white,opacity=.8] (0,1,0) -- (1,1,0) -- (1,-1,0) -- (0,-1,0) -- cycle;				
	\draw [black,line width=1.5pt] (0,-1,0) -- (0,1,0);								
	\draw [black,line width=1.5pt] (1,1,0) -- (1,-1,0) -- (-1,-1,0) -- (-1,1,0) -- cycle;		
	\draw [black,dotted,line width=1pt] (0,-1,1) -- (0,-1,0);					
	\draw [black,dotted,line width=1pt] (0,1,0) -- (0,1,1);					
	\draw [black,dotted,line width=1pt] (-1,-1,1) -- (-1,-1,0);				
	\draw [black,dotted,line width=1pt] (-1,1,1) -- (-1,1,0);					
	\draw [black,dotted,line width=1pt] (1,-1,1) -- (1,-1,0);					
	\draw [black,dotted,line width=1pt] (1,1,1) -- (1,1,0);					
	\draw [black,dotted,line width=1pt] (0,-1,1) -- (1,-1,1) -- (1,1,1) -- (0,1,1);			
	\draw [black,dotted,line width=1pt] (0,1,1) -- (-1,1,1) -- (-1,-1,1) -- (0,-1,1) -- cycle;	
	\draw [black,->] (1,0,0) -- (2.5,0,0) 	node[anchor=south]{$x_1$};
	\draw [black,->] (0,1,0) -- (0,3,0) 	node[anchor=south]{$x_2$};
	\draw [black,->] (0,0,1) -- (0,0,2.3) 	node[anchor=east]{$x_3$};
	\draw [black,->] (-1,1,1.6) 	node[anchor=south]{$B$} .. controls (-1.4,0.5,1.1) .. (-1.2,0.4,1);
	\draw [black] (2.2,0,-0.5) 	node[anchor=north]{$B_{0,+}$} -- (0.5,0.2,0);
	\draw [black] (2,3.5,0.5) 	node[anchor=north]{$B_{0,-}$} -- (-0.5,0.2,0);
	\draw [black] (1.2,0,-1.3) 	node[anchor=north]{$B_{-}$} -- (1.5,1.1,0);
	\draw [black] (-1.6,1.0,0) 	node[anchor=north]{$B_{+}$} -- (-0.9,0.4,0.3);
\end{tikzpicture}
}
\caption{Mappings $\phi_{k}$ and $\psi_{k}$ between a ball $U_k$ and the cube $B$.}
\label{figureone}
\end{figure}

\subsection{Outline of the Proof}

Let $(\om,\Gamma_{\tau})$ be a weak Lipschitz pair for a bounded domain $\om\subset\rN$. 
\begin{itemize}
\item 
As a first step, we observe $ \Hoqct(\om) = \cHoqct(\om) $, i.e., 
for the $ \Hoqom$-spaces the strong and weak definitions of the boundary conditions coincide,
see Lemma \ref{Hoct}.
\item 
In the second and essential step, we construct various regular $ \Hoq $-potentials on simple domains,
mainly for the half-cube $ \Xi$ from \eqref{halfcube} 
with the special boundary constellations \eqref{halfcubegn}, i.e.,
$$ \cDqczn(\Xi) = \Dqczgn(\Xi) = \ed\Hoqmocgn(\Xi), \quad 
\cDeqczn(\Xi) = \Deqczn(\Xi) = \cd\Hoqpocgn(\Xi),$$
see Section \ref{secregpot}.
Potentials of this type are called regular potentials.
\item 
In the third step, Section \ref{secstrweakXi}, 
it is shown that the strong and weak definitions of the boundary conditions coincide 
on the half-cube $\Xi$ from \eqref{halfcube} with the special boundary constellation \eqref{halfcubegn}, i.e.,
\begin{align}
\label{pillemann}
\cDqcn(\Xi) = \Dqcn(\Xi), \quad\cDeqcn(\Xi) = \Deqcn(\Xi). 
\end{align}
\item 
The fourth step proves the compact embedding on the half-cube $\Xi$ from \eqref{halfcube} 
with the special boundary constellations \eqref{halfcubegn}, i.e.,
\begin{align}
\label{MCPXi}
\Dqct(\Xi)\cap\eps^{-1}\Deqcn(\Xi)\hookrightarrow\Ltq(\Xi)
\end{align}
is compact, see Section \ref{seccompembhalfcube}.
\item 
In the fifth step, Theorem \ref{theoweakeqstrong}, \eqref{pillemann} is established for weak Lipschitz domains, i.e., 
$$\cDqct(\om) = \Dqct(\om),\quad\cDeqcn(\om) = \Deqcn(\om).$$
\item 
In the last step, we finally prove the compact embedding \eqref{MCPXi} for weak Lipschitz pairs, i.e.,
\begin{align*}
\Dqct(\om)\cap\eps^{-1}\Deqcn(\om)\hookrightarrow\Ltqom
\end{align*}
is compact, see our main result Theorem \ref{satzMKE}.
\end{itemize}

\subsection{Some Important Results}

Within our proofs we need a few important technical lemmas.
First, the strong and weak definitions of the boundary conditions coincide for $\Hoqom$-forms,
which is a density result for $\Hoqom$-forms.
This is an immediate consequence of the corresponding scalar result, 
whose proof can be found in \cite[Lemma 2, Lemma 3]{jochmanncompembmaxmixbc} and with a simplified proof in
\cite[Lemma 3.1]{bauerpaulyschomburgmaxcompweaklip}.

\begin{lem}[weak and strong boundary conditions coincide for $\Hoqom$]
\label{Hoct}
Let $\om\subset\rN$ be a bounded domain and let 
$(\om, \Gamma_{\tau})$ be a weak Lipschitz pair as well as
$$\Hoqcttr(\om):=\bset{u\in\Hoqom}{u|_{\Gamma_{\tau}}=0}$$
in the sense of traces. Then $\cHoqct(\om)=\Hoqcttr(\om)=\Hoqct(\om)$.
\end{lem}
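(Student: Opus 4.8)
The plan is to reduce the assertion to its scalar counterpart — the case of $0$-forms, i.e.\ of functions — which is precisely \cite[Lemma 2, Lemma 3]{jochmanncompembmaxmixbc} (see also the short proof in \cite[Lemma 3.1]{bauerpaulyschomburgmaxcompweaklip}), and then to lift it componentwise through the global Cartesian chart. The only structural point needed is that, in Cartesian coordinates, each of the three spaces $\cHoqct(\om)$, $\Hoqcttr(\om)$, $\Hoqct(\om)$ is the componentwise analogue of the corresponding scalar space.

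Recalling that the $\Hoqom$-inner product splits over the $N_q$ Cartesian components, $\scp{E}{H}_{\Hoqom}=\scpLtom{\vec E}{\vec H}+\scpLtom{\na\vec E}{\na\vec H}$, convergence in $\Hoqom$ is equivalent to convergence of all $N_q$ component sequences in $\Hoom$. Reading off the definitions: $E\in\cHoqct(\om)$ precisely means that every component $E_I\in\Hoom$ satisfies the scalar integration-by-parts relation against all $\phi\in\Cicn(\om)$; $E\in\Hoqcttr(\om)$ precisely means $E_I|_{\Gamma_{\tau}}=0$ for all $I$, the scalar $L^2$-trace on $\Gamma_{\tau}$ being well defined because the weak Lipschitz charts $\phi_k\in\Czo(U_k,B)$ are bi-Lipschitz, which suffices to flatten the boundary and invoke the trace theorem on the half-cube; and $E\in\Hoqct(\om)$ holds iff every $E_I$ lies in the $\Hoom$-closure of the scalar test functions whose support keeps positive distance from $\Gamma_{\tau}$ — indeed every $\varphi\in\Ciqct(\om)$ has all its Cartesian components in that scalar test space, and conversely approximating scalar tuples $(\phi_I)_I$ of that kind reassemble to a form $\sum_I\phi_I\,\mathrm{d}x^I$ which again belongs to $\Ciqct(\om)$, its support keeping positive distance from $\Gamma_{\tau}$ since only finitely many $I$ occur.

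With these identifications the three $q$-form spaces coincide as soon as their three scalar models do, which is exactly the cited result applied to each $E_I$; this gives $\cHoqct(\om)=\Hoqcttr(\om)=\Hoqct(\om)$. I do not expect a real obstacle here — the lemma is, as indicated, an immediate consequence of the scalar density theorem — and the only points asking for a little care are the elementary equivalence of the vector-valued and the componentwise closures and the (standard, but worth stating) fact that the trace in the definition of $\Hoqcttr(\om)$ is the componentwise $L^2$-trace available on bounded weak Lipschitz domains.
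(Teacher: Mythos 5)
Your proposal is correct and follows the same route as the paper, which itself gives no separate argument but declares the lemma an immediate consequence of the scalar result of Jochmann (resp.\ \cite[Lemma 3.1]{bauerpaulyschomburgmaxcompweaklip}) applied componentwise in the global Cartesian chart. The points you spell out --- that the $\Hoqom$-norm, the weak integration-by-parts condition, the trace condition and the closure of test forms all decouple into the $N_q$ Euclidean components --- are exactly the (routine) identifications the paper leaves implicit, so there is nothing to add.
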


Another crucial tool in our arguments is a universal extension operator for the Sobolev spaces 
$\Dqk(\om)$ and $\Deqk(\om)$ 
given in  \cite{hiptmairlizouunivextdiffforms}, which is based on the universal extension operator for standard Sobolev
spaces $\Hk(\om)$ introduced by Stein in \cite{steinsingintbook}.  
``Universality'' in this context means that the operator, which is given by a single formula, is able to extend all orders of 
Sobolev spaces simultaneously. More precisely, the following theorem, 
which is taken from \cite[Theorem 3.6]{hiptmairlizouunivextdiffforms}, holds:

\begin{lem}[Stein's extension operator]
\label{steinexop}
Let $\om\subset\rN$ be a bounded strong Lipschitz domain.
Then for $k\in\nz_0$ and $0\leq q\leq N$
there exists a (universal) linear and continuous extension operator
$$\E\,:\,\Dqk(\om)\rightarrow \Dqk(\rN).$$
More precisely, $\E$ satisfies $\E E = E$ a.e. in $\om$ and there exists $c>0$
such that for all $E\in\Dqk(\om)$
$$\normDqkrn{\E E} \leq c\normDqkom{E}.$$
Furthermore, $\E$ can be chosen such that $\E E$ has a fixed compact support in $\rN$
for all $E\in\Dqk(\om)$.
\end{lem}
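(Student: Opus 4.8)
The plan is to follow Stein's extension construction, set up as an averaging of pullbacks so that it commutes with the exterior derivative, after a partition of unity reduces the general strong Lipschitz domain to the model case of a sub-graph domain; the resulting operator $\E$ depends on $q$ but not on $k$ and is bounded $\Dqk(\om)\to\Dqk(\rN)$ for every $k\in\nz_0$ simultaneously. By Definition \ref{defistrlip} fix a covering $U_1,\dots,U_K$ of $\Gamma$, rigid motions $R_m$, and $\xi_m\in\Czo(I^{N-1},I)$ with $R_m(U_m\cap\om)=\{x\in B:x_N<\xi_m(x')\}$. Extending each $\xi_m$ to a globally Lipschitz function on $\rz^{N-1}$ by McShane exhibits $R_m(U_m\cap\om)$ as an open subset of the special Lipschitz domain $\Omega_m:=\{x\in\rN:x_N<\xi_m(x')\}$, and $R_m$ induces an isometric, $\ed$-commuting change of coordinates $T_m$ (acting on coefficients by composition with $R_m^{-1}$ and a constant orthogonal matrix) from $q$-forms on $U_m\cap\om$ to $q$-forms on $\Omega_m$. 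Choose $U_0$ with $\ol{U_0}\subset\om$ so that $U_0,\dots,U_K$ cover $\ol\om$, and fix $\varphi_0,\dots,\varphi_K\in C^\infty(\rN)$ with $\supp\varphi_m\subset U_m$ and $\sum_{m=0}^K\varphi_m=1$ near $\ol\om$, as well as $\chi_m\in C^\infty(\rN)$ with $\supp\chi_m\subset U_m$ and $\chi_m=1$ on $\supp\varphi_m$. Granting, for each $m$, a \emph{model operator} $\E_m:\Dqk(\Omega_m)\to\Dqk(\rN)$ with $\E_mF=F$ on $\Omega_m$, $\ed\E_m=\E_m\ed$, and $\|\E_mF\|_{\Hkq(\rN)}\le c\,\|F\|_{\Hkq(\Omega_m)}$ for every $k\in\nz_0$, one sets
\[
\E E:=\varphi_0 E+\sum_{m=1}^{K}\varphi_m\cdot T_m^{-1}\E_m\big(T_m(\chi_m E)\big),
\]
noting that $\chi_m E$ (extended by zero) lies in $\Dqk(\om)$ and $T_m(\chi_m E)$ has compact support in $R_m(U_m\cap\om)\subset\Omega_m$.

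On the model domain $\Omega_*=\{x_N<\xi(x')\}$ with $\xi$ globally Lipschitz, let $\delta\in C^\infty(\rN\setminus\ol\Omega_*)$ be Stein's regularised distance, $\delta(x)\asymp\dist(x,\partial\Omega_*)\asymp x_N-\xi(x')$ with $|\partial^\alpha\delta(x)|\le c_\alpha\,\delta(x)^{1-|\alpha|}$, and for $\lambda\ge1$ put $\Phi_\lambda(x):=(x',\,x_N-c\lambda\delta(x))$ for $x\notin\ol\Omega_*$, with $c$ large enough that $\Phi_\lambda$ maps $\rN\setminus\ol\Omega_*$ into $\Omega_*$ for all $\lambda\ge1$; each $\Phi_\lambda$ is smooth off $\partial\Omega_*$ with uniformly bounded Jacobian. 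Choosing $\psi\in C^\infty([1,\infty))$, rapidly decaying, with $\int_1^\infty\psi=1$ and $\int_1^\infty\lambda^j\psi(\lambda)\,\mathrm{d}\lambda=0$ for all $j\ge1$, define $\E_*F:=F$ on $\Omega_*$ and
\[
(\E_*F)(x):=\int_1^\infty(\Phi_\lambda^{*}F)(x)\,\psi(\lambda)\,\mathrm{d}\lambda,\qquad x\notin\ol\Omega_*.
\]
Each coefficient of $\Phi_\lambda^{*}F$ is a finite combination of coefficients $F_J\circ\Phi_\lambda$ times bounded entries of the Jacobian of $\Phi_\lambda$, and the unit-mass and vanishing-moment conditions on $\psi$ are precisely Stein's; hence the differential-form version of Stein's extension theorem, \cite[Theorem 3.6]{hiptmairlizouunivextdiffforms} (built on \cite{steinsingintbook}), gives $\|\E_*F\|_{\Hkq(\rN)}\le c\,\|F\|_{\Hkq(\Omega_*)}$ for every $k\in\nz_0$ --- one and the same formula for all orders, which is the advertised universality. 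Naturality of pullback, $\ed\Phi_\lambda^{*}F=\Phi_\lambda^{*}\ed F$, together with linearity of the $\lambda$-average and the fact that the $(q+1)$-form instance of $\E_*$ uses the same $\Phi_\lambda$, gives $\ed\E_*F=\E_*\ed F$ off $\partial\Omega_*$ and trivially on $\Omega_*$; since $\int_1^\infty\psi=1$ the traces match across $\partial\Omega_*$, so $\E_*F$ has $H^k$ coefficients with no interface jump and the identity $\ed\E_*F=\E_*\ed F$ holds on all of $\rN$. Thus $F\in\Dqk(\Omega_*)$ implies that $\E_*F$ and $\ed\E_*F=\E_*\ed F$ have $H^k$ coefficients, i.e. $\E_*F\in\Dqk(\rN)$ with the stated bound.

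Reassembling, the product rule gives $\ed\big(\varphi_m T_m^{-1}\E_m(T_m\chi_m E)\big)=\ed\varphi_m\wedge T_m^{-1}\E_m(T_m\chi_m E)+\varphi_m T_m^{-1}\E_m\big(T_m\ed(\chi_m E)\big)$, using $\ed T_m^{\pm1}=T_m^{\pm1}\ed$ and $\ed\E_m=\E_m\ed$; both terms have $H^k$ coefficients since $\varphi_m$ is smooth and $\E_m$ is $H^k$-bounded, and likewise $\ed(\varphi_0 E)=\ed\varphi_0\wedge E+\varphi_0\ed E$. Hence $\E:\Dqk(\om)\to\Dqk(\rN)$ is linear and continuous for every $k\in\nz_0$, by one formula. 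On $\om$ we have $\chi_m=1$ on $\supp\varphi_m$, so the $m$-th summand equals $\varphi_m E$ there, and $\sum_{m=0}^K\varphi_m=1$ near $\ol\om$ gives $\E E=E$ a.e.\ in $\om$; every summand is supported in $U_m$, so $\E E$ is supported in the fixed bounded set $\bigcup_{m=0}^K U_m$, which gives the fixed compact support. The one genuinely hard point is not this bookkeeping but the model estimate itself: boundedness of $F\mapsto\int_1^\infty\Phi_\lambda^{*}F\,\psi\,\mathrm{d}\lambda$ on $H^k$ for \emph{all} $k$ by a single formula, despite $\xi$ being merely Lipschitz --- so $\Phi_\lambda$ and its Jacobian are only bounded and measurable up to $\partial\Omega_*$ --- and despite the form-pullback factors; this is Stein's singular-integral analysis, carried out for scalar functions in \cite{steinsingintbook} and extended to differential forms in \cite[Theorem 3.6]{hiptmairlizouunivextdiffforms}, on which we rely.
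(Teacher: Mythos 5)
The paper offers no proof of this lemma at all: it is quoted verbatim from \cite[Theorem 3.6]{hiptmairlizouunivextdiffforms}, which in turn builds on Stein \cite{steinsingintbook}. Your proposal is a faithful reconstruction of the architecture of that cited proof --- localization by a partition of unity to special Lipschitz sub-graph domains, Stein's regularised distance and the averaged reflection with a rapidly decaying weight of unit mass and vanishing higher moments, recast as an average of \emph{pullbacks} so that the operator commutes with $\ed$ and is therefore bounded on $\Dqk$ for every $k$ by one formula --- and you correctly isolate the genuinely hard step (the $H^k$-boundedness of the averaged pullback for all $k$ with merely Lipschitz $\xi$) and defer it to exactly the references the paper cites. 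So in substance you are doing the same thing the paper does, just with the citation unpacked into its constituent steps; there is no gap relative to the paper's own treatment. One small imprecision: the Jacobian of $\Phi_\lambda$ is not uniformly bounded in $\lambda$ (its entries grow linearly in $\lambda$ through the term $c\lambda\nabla\delta$); this is harmless because the rapid decay of $\psi$ absorbs all polynomial growth in $\lambda$, which is precisely why Stein's weight is chosen rapidly decaying, but the statement as written should be corrected.
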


Our third lemma
summarises well known and fundamental results 
for the theory of Maxwell's equations from \cite{picardpotential,picardcomimb}.
For this, we denote orthogonality and the orthogonal sum in $\Ltqom$
by $\bot$ and $\oplus$, respectively, and introduce
the harmonic Dirichlet and Neumann forms 
$$\mathcal{H}^{q}_{D}(\om):=\Dqcz(\om)\cap\Deqz(\om),\qquad
\mathcal{H}^{q}_{N}(\om):=\Dqz(\om)\cap\Deqcz(\om),$$
respectively.

\begin{lem}[Picard's generalisation of Weck's selection theorem, Helmholtz decompositions and Maxwell estimates]
\label{picardlem}
Let $\om\subset\rN$ be a bounded weak Lipschitz domain. Then the embeddings
\begin{align*}
\Dqcom\cap\Deqom\hookrightarrow\Ltqom,\qquad
\Dqom\cap\Deqcom\hookrightarrow\Ltqom
\end{align*}
are compact and $\mathcal{H}^{q}_{D}(\om)$, $\mathcal{H}^{q}_{N}(\om)$ are finite-dimensional.
Moreover, the Helmholtz decompositions 
\begin{align*}
\Ltqom&=\ed\Dqmocom\oplus\Deqzom
&
\Ltqom&=\ed\Dqmoom\oplus\Deqczom\\
&=\Dqczom\oplus\cd\Deqpoom
&
&=\Dqzom\oplus\cd\Deqpocom\\
&=\ed\Dqmocom\oplus\mathcal{H}^q_{D}(\om)\oplus\cd\Deqpo(\om),
&
&=\ed\Dqmoom\oplus\mathcal{H}^q_{N}(\om)\oplus\cd\Deqpoc(\om)
\end{align*}
are valid. In particular, all ranges are closed subspaces of $\Ltqom$ and 
\begin{align*}
\ed\Dqmocom
&=\Dqczom\cap\mathcal{H}^q_{D}(\om)^{\bot},
&
\ed\Dqmoom
&=\Dqzom\cap\mathcal{H}^q_{N}(\om)^{\bot},\\
\cd\Deqpoom
&=\Deqzom\cap\mathcal{H}^q_{D}(\om)^{\bot},
&
\cd\Deqpocom
&=\Deqczom\cap\mathcal{H}^q_{N}(\om)^{\bot}.
\end{align*}
Furthermore, there exists $c>0$
such that 
$$c\normLtqom{E}\leq\normLtqpoom{\ed E}+\normLtqmoom{\cd E}$$
holds for all $E\in\Dqcom\cap\Deqom\cap\mathcal{H}^q_{D}(\om)^{\bot}$
and all $E\in\Dqom\cap\Deqcom\cap\mathcal{H}^q_{N}(\om)^{\bot}$,
i.e., the Maxwell (or Friedrichs-Poincar\'e type) estimates are valid.
\end{lem}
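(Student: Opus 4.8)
The plan is to deduce Lemma \ref{picardlem} from the existing literature (Picard's work \cite{picardpotential,picardcomimb}) together with a short abstract Hilbert-space argument. The compactness statements and finite-dimensionality of the harmonic forms are exactly the classical generalised Weck selection theorem of Picard \cite{picardcomimb}, valid on bounded weak Lipschitz domains; here we may simply cite it, possibly after noting that the weak Lipschitz hypothesis is what Picard calls the ``Maxwell compactness property'' assumption and checking that our Definition \ref{defilipmani} conforms to his hypotheses. So the substantive part to organise is how the Helmholtz decompositions, closedness of ranges, and the Maxwell estimates follow from those two facts.

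First I would set up the abstract machinery: consider the densely defined closed operator $\ed$ acting from $\Dqmocom$ (or from $\Dqmoom$) into $\Ltqom$; its Hilbert-space adjoint is (up to sign and a Hodge star) $\cd$ with the complementary boundary condition, by the very definition of the weak spaces and the density results. Since we are in full-boundary-condition case here (the lemma is stated for $\Gamma_\tau=\Gamma$ or $\Gamma_\nu=\Gamma$, written $\Dqcom$ etc.), the identification of adjoints is the clean functional-analytic one recalled in the text. The key input is that the range $\ed\Dqmocom$ is closed: this follows from the compact embedding $\Dqcom\cap\Deqom\hookrightarrow\Ltqom$ via the standard argument — on the orthogonal complement of the kernel one gets a Poincar\'e-type estimate $\normLtqom{E}\le c\,\normLtqpoom{\ed E}$ by a contradiction/compactness argument (Rellich trick: a minimising sequence would have a strongly convergent subsequence, forcing the limit into the kernel, contradiction). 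This simultaneously gives the Maxwell estimate asserted at the end. Closedness of $\ed\Dqmocom$ together with the closed range theorem then yields the orthogonal decomposition $\Ltqom=\ed\Dqmocom\oplus(\ed\Dqmocom)^\bot$, and $(\ed\Dqmocom)^\bot=\ker(\cd\text{ with the dual b.c.})=\Deqzom$ — this is the first Helmholtz decomposition. Dualising (applying the Hodge star, or repeating the argument with the roles of $\ed$ and $\cd$ swapped) gives the companion decomposition $\Ltqom=\Dqczom\oplus\cd\Deqpoom$, and similarly the two decompositions in the right-hand column with the complementary boundary condition.

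Next I would refine these: since $\ed\Dqmocom\subset\Dqczom$ and both are closed, $\Dqczom=\ed\Dqmocom\oplus(\Dqczom\cap(\ed\Dqmocom)^\bot)$; identifying the second summand with $\Dqczom\cap\Deqzom=\mathcal H^q_D(\om)$ (again using the adjoint relation and that $(\ed\Dqmocom)^\bot=\Deqzom$) gives the three-term decomposition $\Ltqom=\ed\Dqmocom\oplus\mathcal H^q_D(\om)\oplus\cd\Deqpoom$, and likewise on the right with $\mathcal H^q_N(\om)$. The displayed identities for the ranges ($\ed\Dqmocom=\Dqczom\cap\mathcal H^q_D(\om)^\bot$ etc.) are then just a rereading of these orthogonal decompositions. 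Finally, the Maxwell estimate on $\Dqcom\cap\Deqom\cap\mathcal H^q_D(\om)^\bot$: split such an $E$ via Helmholtz as $E=\ed\alpha+\cd\beta$ with $\alpha\in\Dqmocom$, $\beta\in\Deqpoom$ chosen in the orthogonal complements of the respective kernels, apply the two Poincar\'e estimates to $\alpha$ and $\beta$, use $\normLtqom{E}^2=\normLtqom{\ed\alpha}^2+\normLtqom{\cd\beta}^2$, and note $\ed E=\ed\cd\beta$, $\cd E=\cd\ed\alpha$ so that the two pieces are controlled by $\normLtqpoom{\ed E}$ and $\normLtqmoom{\cd E}$ respectively.

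The main obstacle is not any single hard estimate but bookkeeping: one must carefully track which boundary condition is attached to which space under taking adjoints and Hodge stars, so that the two columns of decompositions come out with the correct pairing of $\Gamma$-conditions, and one must make sure that the compact embedding cited from \cite{picardcomimb} is indeed available in exactly the weak Lipschitz generality stated (as opposed to strong Lipschitz) — if there is any gap there it would have to be bridged, but the authors evidently intend this to be a pure citation. Given that, the proof is a routine, if somewhat lengthy, application of the abstract Hilbert-space complex formalism to the pair $(\ed,\cd)$.
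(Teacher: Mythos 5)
Your proposal is correct and matches the paper's intent exactly: the paper offers no proof of Lemma \ref{picardlem} at all, presenting it as a pure citation of Picard's results \cite{picardpotential,picardcomimb}, and the chain you describe (compact embedding $\Rightarrow$ finite-dimensional harmonic forms and Friedrichs/Poincar\'e estimates by contradiction $\Rightarrow$ closed ranges $\Rightarrow$ projection-theorem Helmholtz decompositions and their refinements) is precisely the standard functional-analytic toolbox the authors invoke, and which they replay explicitly in Sections 4--5 for the mixed-boundary analogues. The only point to tighten is the final estimate: rather than splitting $E=\ed\alpha+\cd\beta$ and applying the reduced-operator estimates to $\ed\alpha$ and $\cd\beta$ (which works but must be phrased for those forms, not for $\alpha$ and $\beta$ themselves), the combined Maxwell estimate follows in one stroke from the compact embedding by the same contradiction argument you already used.
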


\begin{cor}[refined Helmholtz decompositions]
\label{picardcorone}
Let $\om\subset\rN$ be a bounded weak Lipschitz domain.
Then
\begin{align*}
\Dqcom&=\Dqczom\oplus\big(\Dqcom\cap\cd\Deqpoom\big),
&
\ed\Dqcom&=\ed\big(\Dqcom\cap\cd\Deqpoom\big),\\
\Dqom&=\Dqzom\oplus\big(\Dqom\cap\cd\Deqpocom\big),
&
\ed\Dqom&=\ed\big(\Dqom\cap\cd\Deqpocom\big),\\
\Deqom&=\big(\ed\Dqmocom\cap\Deqom\big)\oplus\Deqzom,
&
\cd\Deqom&=\cd\big(\ed\Dqmocom\cap\Deqom\big),\\
\Deqcom&=\big(\ed\Dqmoom\cap\Deqcom\big)\oplus\Deqczom,
&
\cd\Deqcom&=\cd\big(\ed\Dqmoom\cap\Deqcom\big).
\end{align*}
\end{cor}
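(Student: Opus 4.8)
The plan is to obtain all four statements of the Corollary by restricting the four $\Lt$-orthogonal Helmholtz decompositions of Lemma \ref{picardlem} to the appropriate Sobolev spaces, and then to read off the corresponding range identities by applying $\ed$, resp.\ $\cd$, and exploiting that these operators annihilate the closed, resp.\ co-closed, summand.

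First I would isolate the elementary splitting principle that does all the work: if $\Ltqom=V\oplus W$ is an $\Lt$-orthogonal decomposition and $X\subset\Ltqom$ is a subspace with $W\subset X$ (the case $V\subset X$ being symmetric), then $X=(X\cap V)\oplus(X\cap W)$, again $\Lt$-orthogonally. Indeed, for $x\in X$ write $x=v+w$ with $v\in V$ and $w\in W$; since $w\in W\subset X$ we get $v=x-w\in X$, hence $v\in X\cap V$ and $w\in X\cap W=W$, and uniqueness and orthogonality are inherited from the ambient decomposition.

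Next I would apply this principle four times. For the first row, take $V=\Dqczom$, $W=\cd\Deqpoom$, $X=\Dqcom$ and use $\Ltqom=\Dqczom\oplus\cd\Deqpoom$ together with $\Dqczom\subset\Dqcom$; since $X\cap V=\Dqczom$ this yields $\Dqcom=\Dqczom\oplus(\Dqcom\cap\cd\Deqpoom)$. For the second row use $\Ltqom=\Dqzom\oplus\cd\Deqpocom$ with $X=\Dqom$ and $\Dqzom\subset\Dqom$; for the third use $\Ltqom=\ed\Dqmocom\oplus\Deqzom$ with $X=\Deqom$ and $\Deqzom\subset\Deqom$; for the fourth use $\Ltqom=\ed\Dqmoom\oplus\Deqczom$ with $X=\Deqcom$ and $\Deqczom\subset\Deqcom$. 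In each case the containment of the closed or co-closed summand in the Sobolev space being refined is immediate from the definitions. The range identities then follow at once: given $E\in\Dqcom$, decompose $E=E_{0}+E_{1}$ with $E_{0}\in\Dqczom$ and $E_{1}\in\Dqcom\cap\cd\Deqpoom$; then $\ed E=\ed E_{1}$ because $\ed E_{0}=0$, so $\ed\Dqcom\subset\ed(\Dqcom\cap\cd\Deqpoom)$, the reverse inclusion being trivial. The other three range identities are obtained in the same way, applying $\cd$ instead of $\ed$ in the last two rows, where $\cd$ kills the $\cd$-closed summand.

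Since everything reduces to Lemma \ref{picardlem} and the trivial splitting principle, there is no genuine obstacle here; the only thing requiring a little care is the bookkeeping, namely matching each of the four decompositions to the correct one of the four Helmholtz splittings and checking which summand is already contained in the Sobolev space in question.
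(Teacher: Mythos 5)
Your proof is correct and is exactly the argument the paper intends (it states the corollary without proof as an immediate consequence of Lemma \ref{picardlem}): restrict each of the four orthogonal Helmholtz decompositions to the relevant Sobolev space using the fact that one summand is already contained in it, then apply $\ed$ resp.\ $\cd$ to kill the closed resp.\ co-closed part and obtain the range identities. The bookkeeping of which splitting pairs with which space is done correctly in all four cases.
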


Let $\pi_{q,\om}:\Ltqom\to\cd\Deqpocom$ be the
orthonormal Helmholtz projector onto $\cd\Deqpocom$.
By the latter corollary $\pi_{q,\om}$ maps $\Dqom$ to 
$$\Dqom\cap\cd\Deqpocom=\Dqom\cap\Deqczom\cap\mathcal{H}^q_{N}(\om)^{\bot}.$$

\begin{cor}[Maxwell estimate for $\ed$ and Neumann boundary condition]
\label{picardcortwo}
Let $\om\subset\rN$ be a bounded weak Lipschitz domain.
Then for all $E\in\Dqom$ it holds $\pi_{q,\om}E\in\Dqom\cap\cd\Deqpocom$
and $\ed\pi_{q,\om}E=\ed E$ as well as 
$$c\normLtqom{\pi_{q,\om}E}\leq\normLtqpoom{\ed E},$$
with $c$ from Lemma \ref{picardlem}. 
\end{cor}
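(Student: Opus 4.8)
The plan is to read the statement off from the Helmholtz decomposition $\Ltqom=\Dqzom\oplus\cd\Deqpocom$ of Lemma \ref{picardlem} together with the associated Maxwell estimate. First I would note that the membership $\pi_{q,\om}E\in\Dqom\cap\cd\Deqpocom=\Dqom\cap\Deqczom\cap\mathcal{H}^q_{N}(\om)^{\bot}$ for $E\in\Dqom$ is precisely what is recorded in the remark preceding the corollary, and it follows from Corollary \ref{picardcorone} combined with the identity $\cd\Deqpocom=\Deqczom\cap\mathcal{H}^q_{N}(\om)^{\bot}$ of Lemma \ref{picardlem}. In particular $\cd\pi_{q,\om}E=0$ and $\pi_{q,\om}E\in\mathcal{H}^q_{N}(\om)^{\bot}$.

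Next I would identify the complementary part of $E$. Since the sum $\Ltqom=\Dqzom\oplus\cd\Deqpocom$ is orthogonal and $\pi_{q,\om}$ is, by definition, the orthogonal projector onto the second summand, we have $E-\pi_{q,\om}E=(1-\pi_{q,\om})E\in\Dqzom$. As $E\in\Dqom$ and $\Dqzom\subset\Dqom$, this re-confirms $\pi_{q,\om}E\in\Dqom$, and applying $\ed$ gives $\ed\pi_{q,\om}E=\ed E-\ed(E-\pi_{q,\om}E)=\ed E$, because $\ed$ annihilates $\Dqzom$.

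Finally I would apply the Maxwell estimate of Lemma \ref{picardlem} to the form $\pi_{q,\om}E$, which is admissible since $\pi_{q,\om}E\in\Dqom\cap\Deqcom\cap\mathcal{H}^q_{N}(\om)^{\bot}$ (using $\Deqczom\subset\Deqcom$ together with the two facts from the first step). This yields
$$c\normLtqom{\pi_{q,\om}E}\leq\normLtqpoom{\ed\pi_{q,\om}E}+\normLtqmoom{\cd\pi_{q,\om}E}=\normLtqpoom{\ed E}$$
since $\cd\pi_{q,\om}E=0$ and $\ed\pi_{q,\om}E=\ed E$, and with the same constant $c$ as in Lemma \ref{picardlem}. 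This is exactly the asserted estimate. There is no real obstacle here: the corollary is a direct consequence of the preceding two results, and the only point requiring a little care is matching the space that contains $\pi_{q,\om}E$ with the hypotheses of the Maxwell estimate, i.e.\ verifying both $\cd\pi_{q,\om}E=0$ and $\pi_{q,\om}E\in\mathcal{H}^q_{N}(\om)^{\bot}$.
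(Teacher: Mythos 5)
Your proposal is correct and follows essentially the route the paper intends: the corollary is stated without a separate proof, the preceding remark already recording that Corollary \ref{picardcorone} together with the identity $\cd\Deqpocom=\Deqczom\cap\mathcal{H}^q_{N}(\om)^{\bot}$ yields $\pi_{q,\om}E\in\Dqom\cap\cd\Deqpocom$, after which $\ed\pi_{q,\om}E=\ed E$ follows from $(1-\pi_{q,\om})E\in\Dqzom$ and the estimate is the Maxwell estimate of Lemma \ref{picardlem} applied to $\pi_{q,\om}E\in\Dqom\cap\Deqcom\cap\mathcal{H}^q_{N}(\om)^{\bot}$. Your verification of exactly these memberships is the right (and only) point of care.
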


If $\om=\rN$ a similar theory holds true utilising polynomially weighted Sobolev spaces,
see \cite{picardpotential} for details. Let $\pi_{q,\rN}:\Ltq(\rN)\to\Deqz(\rN)$
be the orthonormal Helmholtz projector onto $\Deqz(\rN)$.

\begin{lem}[Helmholtz decompositions and Maxwell estimate for $\ed$ in the whole space]
\label{picardlemrN}
It holds $\mathcal{H}^q_{N}(\rN)=\mathcal{H}^q_{D}(\rN)=\{0\}$ and 
$$\Ltq(\rN)=\Dqz(\rN)\oplus\Deqz(\rN),\quad
\Dq(\rN)=\Dqz(\rN)\oplus\big(\Dq(\rN)\cap\Deqz(\rN)\big).$$
Moreover, for all $E\in\Dq(\rN)$ it holds $\pi_{q,\rN}E\in\Dq(\rN)\cap\Deqz(\rN)$
and $\ed\pi_{q,\rN}E=\ed E$ as well as 
$$\norm{\pi_{q,\rN}E}_{\Dq(\rN)}\leq\norm{E}_{\Dq(\rN)}.$$
\end{lem}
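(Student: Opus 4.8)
The final statement to prove is Lemma \ref{picardlemrN}: Helmholtz decompositions and a Maxwell estimate for $\ed$ on $\rN$, using polynomially weighted Sobolev spaces as in \cite{picardpotential}.

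My plan is as follows. First I would recall from \cite{picardpotential} the weighted Sobolev spaces $\mathsf{L}^{2,s}_q(\rN)$ with weight $(1+|x|^2)^{s/2}$ and the associated weighted versions of $\mathsf{D}^q$ and $\Delta^q$; the point of the weights is that the scalar Laplacian (and hence the Hodge--Laplacian acting componentwise in Cartesian coordinates) is an isomorphism between suitable weighted spaces, which replaces the compact-embedding/closed-range machinery that is unavailable on the unbounded domain $\rN$. From the unique solvability of $-\Delta u = f$ in these weighted spaces together with elliptic regularity one gets, for every $F\in\mathsf{L}^2_q(\rN)$, a form $u$ in a weighted $\mathsf{H}^2_q$-space with $-\Delta u = F$, and then $F = \ed\cd u + \cd\ed u$ gives the $L^2$-orthogonal splitting of $\mathsf{L}^2_q(\rN)$ into $\ed\bigl(\text{weighted }\mathsf{D}^{q-1}\bigr)$ and $\cd\bigl(\text{weighted }\mathsf{D}^{q+1}_{\eps}\bigr)$; closing up in $\mathsf{L}^2$ and using that $\ed$ and $\cd$ have orthogonal ranges (since $\ed^2=0$, $\cd^2=0$ and $\cd=\ed^\ast$ on the relevant domains) yields $\mathsf{L}^2_q(\rN)=\mathsf{D}^q_0(\rN)\oplus\Delta^q_0(\rN)$. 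That $\mathcal{H}^q_N(\rN)=\mathcal{H}^q_D(\rN)=\{0\}$ is immediate from this decomposition: a form that is both closed and co-closed is $L^2$-orthogonal to both summands, hence zero (no boundary conditions intervene since $\Gamma=\emptyset$, so the Dirichlet and Neumann harmonic forms coincide and both spaces equal $\mathsf{D}^q_0(\rN)\cap\Delta^q_0(\rN)=\{0\}$).

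Next, intersecting the decomposition with $\mathsf{D}^q(\rN)$: given $E\in\mathsf{D}^q(\rN)$ write $E=E_0+\widetilde E$ with $E_0\in\mathsf{D}^q_0(\rN)$ and $\widetilde E\in\Delta^q_0(\rN)$. Since $\ed E_0=0$ we have $\widetilde E=E-E_0\in\mathsf{D}^q(\rN)$ with $\ed\widetilde E=\ed E$, so $\widetilde E\in\mathsf{D}^q(\rN)\cap\Delta^q_0(\rN)$, giving $\mathsf{D}^q(\rN)=\mathsf{D}^q_0(\rN)\oplus\bigl(\mathsf{D}^q(\rN)\cap\Delta^q_0(\rN)\bigr)$. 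By definition $\pi_{q,\rN}E=\widetilde E$, so $\pi_{q,\rN}E\in\mathsf{D}^q(\rN)\cap\Delta^q_0(\rN)$ and $\ed\pi_{q,\rN}E=\ed E$. Finally the estimate $\norm{\pi_{q,\rN}E}_{\mathsf{D}^q(\rN)}\leq\norm{E}_{\mathsf{D}^q(\rN)}$ follows from $\normLtqrn{\pi_{q,\rN}E}\leq\normLtqrn{E}$ (orthogonal projection) together with $\normLtqporn{\ed\pi_{q,\rN}E}=\normLtqporn{\ed E}$, since the $\mathsf{D}^q$-norm is $\normLtqrn{\cdot}^2+\normLtqporn{\ed\cdot}^2$; adding these two bounds gives the claim with constant $1$.

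The main obstacle, and the only genuinely technical point, is establishing the weighted solvability and regularity for $-\Delta$ on $\rN$ that underlies the whole decomposition — i.e. showing that $-\Delta$ is an isomorphism between the appropriate polynomially weighted Sobolev spaces and that the solution has the regularity needed to split $F=\ed\cd u+\cd\ed u$ with both pieces in $L^2$. This is precisely the content of \cite{picardpotential}, so I would invoke it rather than reprove it; everything else is the same linear-algebra-of-Hilbert-spaces argument used for bounded domains in Lemma \ref{picardlem} and Corollary \ref{picardcorone}, specialised to the case of empty boundary where the harmonic forms vanish.
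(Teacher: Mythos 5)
Your proposal is correct and follows essentially the same route as the paper, which proves nothing here beyond the remark preceding Lemma \ref{picardlemrN} that the whole-space theory follows from the polynomially weighted Sobolev space framework of \cite{picardpotential}; you invoke that reference for the one substantive ingredient (weighted solvability of the Laplacian, hence the Hodge splitting and the vanishing of the $\Lt$-harmonic forms) and fill in the remaining orthogonal-projection bookkeeping correctly. The concluding estimate with constant $1$ via $\normLtqrn{\pi_{q,\rN}E}\leq\normLtqrn{E}$ and $\ed\pi_{q,\rN}E=\ed E$ is exactly what the stated norm bound requires.
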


Regularity in the whole space, see e.g. \cite[(4.7) or Lemma 4.2 (i)]{kuhnpaulyregmax}, 
shows the following result.

\begin{lem}[regularity in the whole space]
\label{kuhnpaulylemrN}
$\Dq(\rN)\cap\Deq(\rN)=\Hoq(\rN)$ with equal norms.
More precisely, $E\in\Dq(\rN)\cap\Deq(\rN)$ if and only if $E\in\Hoq(\rN)$ and 
$$\norm{E}_{\Hoq(\rN)}^2
=\norm{E}_{\Ltq(\rN)}^2+\norm{\ed E}_{\Ltqpo(\rN)}^2+\norm{\cd E}_{\Ltqmo(\rN)}^2.$$
\end{lem}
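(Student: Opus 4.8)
The plan is to prove $\Dq(\rN)\cap\Deq(\rN)=\Hoq(\rN)$ with the stated norm identity by reducing the problem to components and exploiting the whole-space Fourier transform. First I would observe that one inclusion is trivial: if $E\in\Hoq(\rN)$, then each component $E_I\in\Ho(\rN)$, so $\ed E$ and $\cd E$ are obtained by taking linear combinations of first-order partials of the $E_I$, hence lie in $\Lt$; thus $E\in\Dq(\rN)\cap\Deq(\rN)$ and $\norm{\ed E}^2+\norm{\cd E}^2\leq c\norm{\na\vec E}^2$. The substance is the reverse inclusion together with the sharp constant, and for this I would pass to Fourier space.

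The key computational step is the pointwise algebraic identity on the Fourier side. Writing $\widehat E(\xi)=\sum_I\widehat{E_I}(\xi)\,\mathrm dx^I$, the symbols of $\ed$ and $\cd$ are (up to the usual factor of $\mathrm i$) exterior multiplication by $\xi$ and interior multiplication (contraction) by $\xi$, respectively. The crucial linear-algebra fact is that for any covector $\xi\in\rN$ and any alternating $q$-form $\omega$ on $\rN$,
\begin{align*}
\abs{\xi\wedge\omega}^2+\abs{\iota_\xi\omega}^2=\abs{\xi}^2\abs{\omega}^2,
\end{align*}
which is just the statement that exterior and interior multiplication by a fixed covector are adjoint and that $\iota_\xi(\xi\wedge\omega)+\xi\wedge\iota_\xi\omega=\abs{\xi}^2\omega$ (the Leibniz/Cartan identity for the contraction). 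Applying this with $\omega=\widehat E(\xi)$ and integrating over $\xi\in\rN$ gives, by Plancherel,
\begin{align*}
\norm{\ed E}_{\Ltqpo(\rN)}^2+\norm{\cd E}_{\Ltqmo(\rN)}^2
=\int_{\rN}\abs{\xi}^2\abs{\widehat E(\xi)}^2\,\mathrm d\xi
=\sum_I\int_{\rN}\abs{\xi}^2\abs{\widehat{E_I}(\xi)}^2\,\mathrm d\xi
=\norm{\na\vec E}_{\Lt(\rN)}^2.
\end{align*}
Adding $\norm{E}_{\Ltq(\rN)}^2$ to both sides yields the asserted identity, and in particular $\abs{\xi}^2\abs{\widehat{E_I}}^2\in\Lt(\rN)$ for each $I$, so each $E_I\in\Ho(\rN)$, i.e.\ $E\in\Hoq(\rN)$.

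I expect the main obstacle to be purely presentational rather than deep: one must be careful that the argument is legitimately applied to $E\in\Dq(\rN)\cap\Deq(\rN)$ \emph{a priori}, i.e.\ that $\ed E$ and $\cd E$ computed distributionally agree with the Fourier multipliers $\mathrm i\,\xi\wedge\cdot$ and $\mathrm i\,\iota_\xi\cdot$ acting on $\widehat E\in\Lt$; this is standard since distributional derivatives correspond to tempered-distribution multiplication by $\mathrm i\xi$, and $\ed E,\cd E\in\Lt$ by hypothesis forces the corresponding symbols applied to $\widehat E$ to be $\Lt$-functions. Alternatively, and perhaps cleaner for the paper, one can avoid Fourier analysis entirely by first mollifying: for $E\in\Dq(\rN)\cap\Deq(\rN)$ set $E_\rho=E*\varphi_\rho$ with a standard mollifier, note $E_\rho\in\Ciq(\rN)$ with $\ed E_\rho=(\ed E)*\varphi_\rho$, $\cd E_\rho=(\cd E)*\varphi_\rho$, and verify the identity $\norm{\na\vec E_\rho}_{\Lt}^2=\norm{\ed E_\rho}_{\Lt}^2+\norm{\cd E_\rho}_{\Lt}^2$ by integration by parts component-wise (this is the classical ``$\mathrm{div}$-$\mathrm{curl}$'' integration-by-parts identity, valid on $\rN$ with no boundary terms and generalising to forms via the Cartan identity), then pass to the limit $\rho\to0$ using that the right-hand side converges while lower-semicontinuity of the $\Lt$-norm of $\na\vec E_\rho$ gives $\na\vec E\in\Lt$ with the sharp bound, and finally upgrade the inequality to equality by the trivial inclusion already noted. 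Either route is short; I would present the mollifier version to keep the paper self-contained, relegating the Fourier computation to a remark. In any case the result is essentially classical, which is why the excerpt simply cites \cite{kuhnpaulyregmax}.
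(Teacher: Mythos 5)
The paper does not prove this lemma at all: it simply cites \cite[(4.7) or Lemma 4.2 (i)]{kuhnpaulyregmax} and moves on. Your Fourier-side argument is the standard proof of exactly this statement and is correct: the adjointness of $\xi\wedge\cdot$ and $\iota_\xi$ together with the anticommutation identity $\iota_\xi(\xi\wedge\omega)+\xi\wedge\iota_\xi\omega=|\xi|^2\omega$ gives the pointwise identity $|\xi\wedge\widehat E(\xi)|^2+|\iota_\xi\widehat E(\xi)|^2=|\xi|^2|\widehat E(\xi)|^2$, and Plancherel converts this into the claimed norm identity; the a priori issue you flag (that for $E\in\Dq(\rN)\cap\Deq(\rN)$ the distributional $\ed E,\cd E$ are the $\Lt$-functions with Fourier transforms $\mathrm i\,\xi\wedge\widehat E$ and $\pm\mathrm i\,\iota_\xi\widehat E$) is indeed routine for tempered distributions, and the membership $|\xi|\,|\widehat{E_I}|\in\Lt(\rN)$ then yields $E\in\Hoq(\rN)$. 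So your write-up would make the paper self-contained where it currently defers to the literature. One small correction to your mollifier variant: the final step ``upgrade the inequality to equality by the trivial inclusion'' does not quite work as stated, since the trivial inclusion only gives $\norm{\ed E}^2+\norm{\cd E}^2\leq c\,\norm{\na\vec E}^2$ with an unspecified constant $c$, which combined with the reverse inequality from lower semicontinuity does not force equality unless $c=1$. The clean fix is: once $E\in\Hoq(\rN)$ is established, the exact identity $\norm{\na\vec E}^2=\norm{\ed E}^2+\norm{\cd E}^2$ follows by density of $\Ciqc(\rN)$ in $\Hoq(\rN)$ and continuity of both sides in the $\Hoq$-norm (or simply by the now fully justified Fourier computation). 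With that repair both of your routes are sound; the Fourier version is the shorter and sharper of the two.
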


\section{Regular Potentials}
\label{secregpot}

As one of our main steps (step 4), in Section \ref{seccompembhalfcube} 
the compact embedding is proved on the half-cube $\Xi\subset\rN$.
This will be achieved (in step 2) by constructing regular $\Ho(\Xi)$-potentials 
for $ \ed $-free and $ \cd $-free $\Ltq(\Xi)$-forms,
which will then enable us to use Rellich's selection theorem. 
This section is devoted to the construction and existence of these regular potentials, i.e.,
to step 2.

\subsection{Regular Potentials Without Boundary Conditions}

Let us recall
$$\ed\Dqmoom=\Dqzom\cap\mathcal{H}^q_{N}(\om)^{\bot},\quad
\cd\Deqpoom=\Deqzom\cap\mathcal{H}^q_{D}(\om)^{\bot}$$
from Lemma \ref{picardlem}.
The next two lemmas ensure the existence of $ \Hoqom $-potentials without boundary conditions
for strong Lipschitz domains. 

\begin{lem}[regular potential for $\ed$ without boundary condition]
	\label{Dqzpot}
	Let $\om\subset\rN$ be a bounded strong Lipschitz domain.
	Then there exists a continuous linear operator
	$$\T_{\ed}:\Dqzom\cap\mathcal{H}^{q}_{N}(\om)^{\perp}\rightarrow\Hoqmo(\rN)\cap\Deqmoz(\rN)$$ 
	such that for all $E\in\Dqzom\cap\mathcal{H}^{q}_{N}(\om)^{\perp}$ 
	$$\ed\T_{\ed} E=E\quad\text{in }\om.$$
	Especially
	$$\Dqzom\cap\mathcal{H}^{q}_{N}(\om)^{\perp}
	=\ed\Hoqmoom=\ed\big(\Hoqmoom\cap\Deqmozom\big)$$ 
	and the regular potential 
	depends continuously on the data.
	Particularly, these are closed subspaces of $\Ltqom$ and $\T_{\ed}$ is a right inverse to $\ed$.
	By a simple cut-off technique $\T_{\ed}$ may be modified to 
	$$\T_{\ed}:\Dqzom\cap\mathcal{H}^{q}_{N}(\om)^{\perp}\rightarrow\Hoqmo(\rN)$$ 
	such that $\T_{\ed}E$ has a fixed compact support in $\rN$ 
	for all $E\in\Dqzom\cap\mathcal{H}^{q}_{N}(\om)^{\perp}$.
\end{lem}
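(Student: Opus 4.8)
The plan is to reduce the statement to the corresponding result in the whole space, Lemma \ref{picardlemrN} together with the whole-space regularity Lemma \ref{kuhnpaulylemrN}, via Stein's universal extension operator (Lemma \ref{steinexop}) applied to the space $\Dqmok(\om)$ with $k=0$. First I would take $E\in\Dqzom\cap\mathcal{H}^{q}_{N}(\om)^{\perp}$ and recall from Corollary \ref{picardcorone} that $E=\ed F$ for some $F\in\Dqmoom\cap\cd\Deqom$; more to the point, by Lemma \ref{picardlem} we have $E\in\ed\Dqmoom$, so there is an $\Ltqmoom$-potential whose exterior derivative is $E$. The idea is \emph{not} to use such a (merely $\Lt$-regular) potential directly, but to first extend $E$ itself: since $E\in\Dqzom\subset\Dq(\om)=\Dqgen{q}{0}{}(\om)$, Stein's operator produces $\widetilde E:=\E E\in\Dqgen{q}{0}{}(\rN)=\Dq(\rN)$ with $\ed\widetilde E=\E(\ed E)=\E 0$. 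Here one must be slightly careful: $\E$ commutes with $\ed$ in the sense that $\ed\E E=\E\ed E$ (this is part of why the extension of \cite{hiptmairlizouunivextdiffforms} is built to respect the de Rham complex), so $\widetilde E\in\Dqz(\rN)$ and $\widetilde E$ has fixed compact support in $\rN$.

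Next I would apply the whole-space Helmholtz machinery. By Lemma \ref{picardlemrN}, $\widetilde E\in\Dqz(\rN)$ can be written via the projector as $\widetilde E=\pi_{q,\rN}\widetilde E+(1-\pi_{q,\rN})\widetilde E$; since $\widetilde E$ is already $\ed$-closed one instead wants a potential, so I would invoke the whole-space regular potential for $\ed$: from $\mathcal{H}^q_{N}(\rN)=\mathcal{H}^q_{D}(\rN)=\{0\}$ and the decomposition $\Ltq(\rN)=\Dqz(\rN)\oplus\Deqz(\rN)$ one gets $\Dqz(\rN)=\ed\Dqmo(\rN)$, and in fact (this is the whole-space analogue of Lemma \ref{Dqzpot} itself, available from \cite{picardpotential} in weighted spaces, or directly by convolution with the Poincaré/Bogovskii-type kernel) there is a continuous right inverse $\widetilde{\T}_{\ed}:\Dqz(\rN)\to\Hoqmo(\rN)$ with $\ed\widetilde{\T}_{\ed}=\mathrm{id}$. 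Composing, set $\T_{\ed}E:=\big(\widetilde{\T}_{\ed}\circ\E\big)E\in\Hoqmo(\rN)$. Then $\ed\T_{\ed}E=\widetilde E=E$ in $\om$, and continuity $\normHoqmorn{\T_{\ed}E}\le c\normDqkom{E}=c\normDqom{E}=c\normLtqom{E}$ follows by chaining the two operator bounds, using $\ed E=0$.

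To upgrade the target space to $\Hoqmo(\rN)\cap\Deqmoz(\rN)$, I would use the whole-space Helmholtz decomposition $\Hoqmo(\rN)=\big(\Hoqmo(\rN)\cap\Dqmoz(\rN)\big)\oplus\big(\Hoqmo(\rN)\cap\Deqmoz(\rN)\big)$, or more precisely project $\T_{\ed}E$ along $\Dqmoz(\rN)$: replacing $\T_{\ed}E$ by $\pi_{q-1,\rN}^{\perp}$-type correction one kills the $\cd$-part while leaving $\ed\T_{\ed}E$ unchanged (since the correction term is $\ed$-closed), and by Lemma \ref{kuhnpaulylemrN} the corrected potential stays in $\Hoqmo(\rN)$ with controlled norm. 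This gives $\T_{\ed}E\in\Hoqmo(\rN)\cap\Deqmoz(\rN)$ and hence the claimed identities $\Dqzom\cap\mathcal{H}^{q}_{N}(\om)^{\perp}=\ed\Hoqmoom=\ed\big(\Hoqmoom\cap\Deqmozom\big)$, where ``$\subseteq$'' is what we just proved and ``$\supseteq$'' is immediate from $\ed\Hoqmoom\subseteq\ed\Dqmoom=\Dqzom\cap\mathcal{H}^q_{N}(\om)^{\perp}$ (Lemma \ref{picardlem}); closedness of these subspaces follows since they equal the closed space $\Dqzom\cap\mathcal{H}^q_{N}(\om)^{\perp}$. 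Finally, the compact-support modification is exactly the cut-off remark: multiply $\T_{\ed}E$ by a fixed $\chi\in\Ci_c(\rN)$ equal to $1$ on a neighbourhood of $\ol\om\cup\supp\E E$; this preserves $\ed\T_{\ed}E=E$ on $\om$ and continuity, at the cost of losing the global $\cd$-free property outside $\om$, which is acceptable for the stated modified operator. The main obstacle I anticipate is the bookkeeping that Stein's extension genuinely commutes with $\ed$ and preserves the relevant (co)homological structure — everything else is assembling whole-space facts that the excerpt has already granted us.
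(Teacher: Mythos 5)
Your proposal inverts the order of operations compared with the paper, and this creates a genuine gap. You extend $E$ itself by Stein's operator and then try to solve $\ed A=\E E$ in the whole space; the paper instead first produces an $\Lt$-potential \emph{inside} $\om$ (Lemma \ref{picardlem}), normalises it by the Helmholtz projection $\pi_{q-1,\om}$ so that it is unique and bounded by $\normLtqom{E}$ (Corollary \ref{picardcortwo}), extends that \emph{potential} by Stein's operator, projects it in $\rN$ onto $\Deqmoz(\rN)$ (which leaves $\ed$ unchanged, Lemma \ref{picardlemrN}), and reads off the $\Hoqmo(\rN)$-regularity from Lemma \ref{kuhnpaulylemrN}. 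The step of yours that does not go through as written is the existence of a continuous right inverse $\widetilde{\T}_{\ed}:\Dqz(\rN)\to\Hoqmo(\rN)$. Your justification, ``from $\mathcal{H}^q_N(\rN)=\{0\}$ and $\Ltq(\rN)=\Dqz(\rN)\oplus\Deqz(\rN)$ one gets $\Dqz(\rN)=\ed\Dqmo(\rN)$,'' is false: the orthogonal decomposition only yields $\Dqz(\rN)=\overline{\ed\Dqmo(\rN)}$, and in the unbounded whole space this range is \emph{not} closed (there is no Poincar\'e estimate), which is exactly why Lemma \ref{picardlemrN} is stated in terms of projectors and why the paper constructs the potential on the bounded domain, where Lemma \ref{picardlem} does give closed ranges and a continuous potential. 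A whole-space right inverse for compactly supported closed forms can be built (weighted spaces, Newton/Bogovski\u{\i}-type kernels), but that is additional machinery not contained in the lemmas you were granted, and it is precisely the difficulty the paper's arrangement of steps avoids.

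A secondary issue: your first step needs $\ed\E E=\E\ed E=0$, i.e.\ that Stein's extension commutes with $\ed$. Lemma \ref{steinexop} only asserts boundedness $\Dqk(\om)\to\Dqk(\rN)$; for $E\in\Dqz(\om)$ this gives $\norm{\ed\E E}_{\Ltqpo(\rN)}\le c\normLtqom{E}$ but not $\ed\E E=0$. (The commutation property does hold for the Hiptmair--Li--Zou operator, but you would have to invoke it explicitly; the paper's proof needs only the stated boundedness of $\E$ on $\Dgen{0,q-1}{}{}(\om)$ because it extends the potential, whose exterior derivative is then corrected anyway by the subsequent projection.) Your final upgrade to $\Deqmoz(\rN)$ via $\pi_{q-1,\rN}$ and Lemma \ref{kuhnpaulylemrN}, and the identification of the three spaces together with their closedness, do match the paper and are fine once the two points above are repaired.
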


\begin{proof}
Suppose $ E\in\Dqzom\cap\mathcal{H}^{q}_{N}(\om)^{\perp} $.
By Lemma \ref{picardlem} there exists $ H\in\Dqmoom $ with $ \ed H = E $ in $ \om $. 
Applying Corollary \ref{picardcortwo} we get
$\pi_{q-1,\om}H\in\Dqmoom\cap\cd\Deqcom $ 
with $\ed\pi_{q-1,\om}H=\ed H=E$ and 
$$\norm{\pi_{q-1,\om} H}_{\Dqmoom}\leq c\normLtqom{E}.$$
Note that $\pi_{q-1,\om} H$ is uniquely determined.
By the Stein extension operator $\E:\Dgen{0,q-1}{}{}(\om)\rightarrow \Dgen{0,q-1}{}{}(\rN)$
from Lemma \ref{steinexop} we have
$\E\pi_{q-1,\om}H\in\Dgen{0,q-1}{}{}(\rN)$ with compact support.
Projecting again, now with Lemma \ref{picardlemrN} onto $ \Deqmoz(\rN) $, we obtain 
$\pi_{q-1,\rN}\E\pi_{q-1,\om}H\in\Dqmo(\rN)\cap\Deqmoz(\rN)$
(again uniquely determined) with $\ed\pi_{q-1,\rN}\E\pi_{q-1,\om}H=\ed\E\pi_{q-1,\om}H$ and
$$\norm{\pi_{q-1,\rN}\E\pi_{q-1,\om}H}_{\Dqmo(\rN)}
\leq\norm{\E\pi_{q-1,\om}H}_{\Dqmo(\rN)}
\leq c\norm{\pi_{q-1,\om}H}_{\Dqmo(\om)}.$$
Lemma \ref{kuhnpaulylemrN} shows $\pi_{q-1,\rN}\E\pi_{q-1,\om}H\in\Hoqmo(\rN)\cap\Deqmoz(\rN)$ with
$$\norm{\pi_{q-1,\rN}\E\pi_{q-1,\om}H}_{\Hoqmo(\rN)}
=\norm{\pi_{q-1,\rN}\E\pi_{q-1,\om}H}_{\Dqmo(\rN)}.$$
Finally, 
$\T_{\ed}E:=\pi_{q-1,\rN}\E\pi_{q-1,\om}H\in\Hoqmo(\rN)\cap\Deqmoz(\rN)$ 
meets our needs as
$$\norm{\T_{\ed}E}_{\Hoqmo(\rN)}
\leq c\normLtqom{E}$$
and $\ed\T_{\ed}E=\ed\pi_{q-1,\rN}\E\pi_{q-1,\om}H=\ed\E\pi_{q-1,\om}H=\ed\pi_{q-1,\om}H=\ed H=E$ in $\om$.
\end{proof}

By Hodge-$\star$-duality we get a corresponding result for the $ \cd $-operator.

\begin{lem}[regular potential for $\cd$ without boundary condition]
	\label{Deqzpot}
	Let $\om\subset\rN$ be a bounded strong Lipschitz domain.
	Then there exists a continuous linear operator
	$$\T_{\cd}:\Deqzom\cap\mathcal{H}_{D}^{q}(\om)^\perp\rightarrow\Hoqpo(\rN)\cap\Dqpoz(\rN),$$ 
	such that for all $E\in\Deqzom\cap\mathcal{H}_{D}^{q}(\om)^\perp$ 
	$$\cd\T_{\cd}E=E\quad\text{in }\om.$$
	Especially
	$$\Deqzom\cap\mathcal{H}_{D}^{q}(\om)^\perp
	=\cd\Hoqpoom=\cd\big(\Hoqpoom\cap\Dqpozom\big)$$ 
	and the regular potential depends 		continuously on the data.
	In particular, these are closed subspaces of $\Ltqom$ and $\T_{\cd}$ is a right inverse to $\cd$.
    By a simple cut-off technique $\T_{\cd}$ may be modified to 
	$$\T_{\cd}:\Deqzom\cap\mathcal{H}^{q}_{D}(\om)^{\perp}\rightarrow\Hoqpo(\rN)$$ 
	such that $\T_{\cd}E$ has a fixed compact support in $\rN$ 
	for all $E\in\Deqzom\cap\mathcal{H}^{q}_{D}(\om)^{\perp}$.
\end{lem}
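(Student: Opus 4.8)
The statement is the Hodge-$\star$-dual of Lemma \ref{Dqzpot}, so the natural approach is simply to transport the already-proven result through the Hodge-star operator rather than to redo the functional-analytic construction. The plan is to set $T_{\cd} := (-1)^{?}\star\,\T_{\ed}\,\star$ (with the appropriate sign from the identity $\cd=(-1)^{N(q-1)}\star\ed\star$), checking that $\star$ is an isometric isomorphism $\Ltqom\to\Lt^{N-q}(\om)$ carrying $\Deq(\om)$ onto $\Dgen{N-q}{}{}(\om)$, $\Deqzom$ onto $\Dgen{N-q}{}{}_{0}(\om)$, and $\mathcal{H}^{q}_{D}(\om)$ onto $\mathcal{H}^{N-q}_{N}(\om)$ (using $\star\Dqom=\Degen{N-q}{}{}(\om)$ and $\star\Deqom=\Dgen{N-q}{}{}(\om)$ from the preliminaries, together with $\star\mathcal{H}^q_D=\mathcal{H}^{N-q}_N$). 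Under these identifications, $\mathcal{H}^q_D(\om)^{\perp}$ corresponds to $\mathcal{H}^{N-q}_{N}(\om)^{\perp}$ since $\star$ is an $\Lt$-isometry.

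Concretely, given $E\in\Deqzom\cap\mathcal{H}^{q}_{D}(\om)^{\perp}$, I would apply $\star$ to obtain $\star E\in\Dgen{N-q}{}{}_{0}(\om)\cap\mathcal{H}^{N-q}_{N}(\om)^{\perp}$, feed it into the operator $\T_{\ed}$ of Lemma \ref{Dqzpot} (applied with rank $N-q$ in place of $q$), producing $\T_{\ed}(\star E)\in\Hogen{N-q-1}{}{}(\rN)\cap\Degen{N-q-1}{}{}_{0}(\rN)$ with $\ed\T_{\ed}(\star E)=\star E$ in $\om$. Applying $\star^{-1}$ then lands in $\Hoqpo(\rN)\cap\Dqpoz(\rN)$ (since $\star$ maps $(N-q-1)$-forms to $(q+1)$-forms and intertwines $\Ho$-regularity and $\ed$-/$\cd$-freeness as above), and unravelling $\cd=(-1)^{N(q-1)}\star\ed\star$ on the $(q+1)$-form yields $\cd\,\star^{-1}\T_{\ed}(\star E)=E$ in $\om$ after fixing the sign. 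Continuity, linearity, the right-inverse property, and closedness of $\cd\Hoqpoom=\cd(\Hoqpoom\cap\Dqpozom)$ are all immediate because $\star$ is a linear isometric isomorphism; the final cut-off modification to obtain fixed compact support transfers verbatim from the corresponding statement in Lemma \ref{Dqzpot}.

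The only point requiring genuine care — and the main (though minor) obstacle — is bookkeeping of ranks and signs: one must verify that $\star$ sends $\mathcal{H}^{q}_{D}(\om)$ to $\mathcal{H}^{N-q}_{N}(\om)$ (not the other way), that the shift $q\mapsto N-q$ turns the $(q-1)$-forms appearing in Lemma \ref{Dqzpot} into $(q+1)$-forms here, and that the sign $(-1)^{N(q-1)}$ together with $\star\star=(-1)^{q(N-q)}$ on $q$-forms combines correctly so that $T_{\cd}$ is honestly a right inverse of $\cd$. None of this is deep, but it is exactly the kind of index arithmetic where an off-by-one or a sign slip would invalidate the statement, so I would write these identifications out explicitly before invoking Lemma \ref{Dqzpot}.
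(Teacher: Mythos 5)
Your proposal is correct and is exactly the paper's approach: the paper proves Lemma \ref{Dqzpot} and then obtains Lemma \ref{Deqzpot} with the single remark ``By Hodge-$\star$-duality we get a corresponding result for the $\cd$-operator,'' which is precisely the conjugation $\T_{\cd}=\pm\star\T_{\ed}\star$ you spell out. Your rank and sign bookkeeping ($\star\mathcal{H}^{q}_{D}=\mathcal{H}^{N-q}_{N}$, $(N-q-1)$-forms becoming $(q+1)$-forms, $\star$ an $\Lt$-isometry preserving orthogonal complements) is accurate and in fact more detailed than what the paper records.
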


\subsection{Regular Potentials With Boundary Conditions for the Half-Cube}

Now we start constructing $\Hoq(\Xi)$-potentials on $\Xi$ with boundary conditions. 
Let us recall our special setting on the half-cube
$$\Xi=B_{-}\quad\text{and}\quad
\gamma_{\nu}=\emptyset,\quad\gamma_{\nu}=B_{0}\quad\text{or}\quad\gamma_{\nu}=B_{0,+}.$$
Furthermore, cf. Figure \ref{fig:cube}, we extend $ \Xi $ over $ \gamma_\nu $ by
\begin{align*}
\widetilde \Xi&\,=
\text{\rm int}(\ol\Xi\cup\ol{\widehat \Xi}),&
\widehat \Xi&:=
\begin{cases}
\set{x\in B}{x_N>0}=B_{+}
&\text{, if }\gamma_{\nu}=B_{0},\\
\set{x\in B}{x_N,x_1>0}
=\set{x\in B_{+}}{x_1>0}=:B_{+,+}
&\text{, if }\gamma_{\nu}=B_{0,+}.
\end{cases}
\end{align*}

\begin{lem}[regular potential for $\ed$ with partial boundary condition on the half-cube]
\label{satzD0L6}
There exists a continuous linear operator
$$\S_{\ed}:\cDqczgn(\Xi)\rightarrow\Hoqmo(\rN)\cap\Hoqmocgn(\Xi),$$
such that for all $H\in\cDqczgn(\Xi)$
$$\ed\S_{\ed} H=H\quad\text{in }\Xi.$$
Especially 
$$\cDqczgn(\Xi)=\Dqczgn (\Xi)=\ed\Hoqmocgn(\Xi)=\ed\Dqmocgn(\Xi)=\ed\cDqmocgn(\Xi)$$
and the regular $\Hoqmocgn(\Xi)$-potential depends continuously on the data.
In particular, these spaces are closed subspaces of $\Ltq(\Xi)$ and $\S_{\ed}$ is a right inverse to $\ed$.
Without loss of generality, 
$\S_{\ed}$ maps to forms with a fixed compact support in $\rN$.
\end{lem}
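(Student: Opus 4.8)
The plan is as follows. The case $\gamma_\nu=\emptyset$ is nothing but Lemma~\ref{Dqzpot} applied to $\om=\Xi$: there $\cDqczgn(\Xi)=\Dqz(\Xi)$, $\Hoqmocgn(\Xi)=\Hoqmo(\Xi)$, and $\mathcal{H}^{q}_{N}(\Xi)=\{0\}$, so the orthogonality constraint is void and $\S_{\ed}:=\T_{\ed}$ settles everything at once. So assume $\gamma_\nu=B_0$ or $\gamma_\nu=B_{0,+}$, and let $\sigma\colon x\mapsto(x',-x_N)$ be the reflection fixing the hyperplane $\{x_N=0\}$. Given $H\in\cDqczgn(\Xi)$ I would form its \emph{odd extension} over $\gamma_\nu$, $\widetilde H:=H$ on $\Xi$ and $\widetilde H:=-\sigma^{*}H$ on $\widehat\Xi$. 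The decisive point is that $\widetilde H\in\Dqz(\widetilde\Xi)$: $\ed\widetilde H=0$ because $\ed H=0$ and $\ed$ commutes with $\sigma^{*}$, while a routine integration by parts combined with the change of variables $\sigma$ (an orientation reversing isometry) reduces the testing identity defining $\widetilde H\in\Dq(\widetilde\Xi)$ precisely to the weak boundary condition contained in $H\in\cDqczgn(\Xi)$ — the boundary terms live only on $\gamma_\tau\subset\partial\widetilde\Xi$, since $\gamma_\nu$ is now interior to $\widetilde\Xi$. Moreover $\widetilde\Xi$ is a bounded strong Lipschitz domain (the cube $B$ for $\gamma_\nu=B_0$, an L-shaped reentrant-corner region, hence still Lipschitz, for $\gamma_\nu=B_{0,+}$) and is topologically trivial, so $\mathcal{H}^{q}_{N}(\widetilde\Xi)=\{0\}$.

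Next, Lemma~\ref{Dqzpot} on $\widetilde\Xi$ produces $\widehat E:=\T_{\ed}\widetilde H\in\Hoqmo(\rN)$ with fixed compact support, $\ed\widehat E=\widetilde H$ in $\widetilde\Xi$, and $\norm{\widehat E}_{\Hoqmo(\rN)}\le c\,\norm{\widetilde H}_{\Ltq(\widetilde\Xi)}\le c\,\norm{H}_{\Ltq(\Xi)}$. Then $\widehat E|_{\Xi}\in\Hoqmo(\Xi)$ solves $\ed(\widehat E|_{\Xi})=H$, and what is still missing is that $\widehat E|_{\Xi}$ vanish on $\gamma_\nu$ in the sense of traces. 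I would repair this in two steps. First, on the $\sigma$-invariant box $Q$ around $\gamma_\nu$ ($Q=B$ if $\gamma_\nu=B_0$, $Q=\set{x\in B}{x_1>0}$ if $\gamma_\nu=B_{0,+}$) the form $\widetilde H$ is $\sigma$-odd, so the $\sigma$-odd part $\tfrac12(\widehat E-\sigma^{*}\widehat E)$ still satisfies $\ed(\,\cdot\,)=\widetilde H$ on $Q$ and has \emph{vanishing tangential trace} on $\gamma_\nu$; for $\gamma_\nu=B_0$ (where $\widetilde\Xi=B=Q$) I set $\widehat E_1:=\tfrac12(\widehat E-\sigma^{*}\widehat E)$, and for $\gamma_\nu=B_{0,+}$ I glue this odd part to $\widehat E$ away from $\gamma_\nu$: the two differ on $Q$ by the $\ed$-closed form $\tfrac12(\widehat E+\sigma^{*}\widehat E)$, for which Lemma~\ref{Dqzpot} on the box $Q$ yields an $H^{1}$-potential $\rho$, and $\widehat E_1:=\widehat E-\ed(\chi\rho)$ with a fixed cut-off $\chi\equiv1$ near $\gamma_\nu$ then has $\ed\widehat E_1=\widetilde H$ in $\widetilde\Xi$, lies in $\Hoqmo(\rN)$ with fixed compact support, and coincides with the odd part near $\gamma_\nu$. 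Second, the remaining (normal) part of the trace $\widehat E_1|_{\gamma_\nu}\in H^{1/2}(\gamma_\nu)$ is removed by subtracting $\ed(\eta\psi)$, where $\eta\equiv1$ near $\gamma_\nu$ is a fixed cut-off and $\psi$ is a $(q-2)$-form without an $x_N$-component, of class $H^{2}$ near $\gamma_\nu$, with vanishing trace and with normal derivative on $\gamma_\nu$ equal to that missing part — an ordinary trace extension of $H^{1/2}$-data, to be carried out with the usual care near the interface $\overline{\gamma_\tau}\cap\overline{\gamma_\nu}$ when $\gamma_\nu=B_{0,+}$ (a preliminary partition of unity localizing $H$ away from the interface makes this harmless). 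In the exceptional case $q=1$, where $\psi$ would have negative rank, one instead notes that $\widehat E_1|_{\gamma_\nu}$ is locally constant, hence constant, and subtracts that constant, re-cutting-off far from $\widetilde\Xi$. I then set $\S_{\ed}H:=\widehat E_1-\ed(\eta\psi)$.

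By construction $\S_{\ed}$ is linear and continuous, $\S_{\ed}H\in\Hoqmo(\rN)$ has fixed compact support, $\ed\S_{\ed}H=\widetilde H=H$ in $\Xi$, and $\S_{\ed}H|_{\Xi}\in H^{1}(\Xi)$ vanishes on $\gamma_\nu$ in the trace sense, so $\S_{\ed}H|_{\Xi}\in\Hoqmocgn(\Xi)$ by Lemma~\ref{Hoct} applied to the pair $(\Xi,\gamma_\nu)$; this is the main assertion. The chain of identities then follows formally: $H=\ed\S_{\ed}H$ gives $\cDqczgn(\Xi)\subseteq\ed\Hoqmocgn(\Xi)$; the inclusions $\Hoqmocgn(\Xi)\subseteq\Dqmocgn(\Xi)\subseteq\cDqmocgn(\Xi)$ (the $\Dq$-norm is dominated by the $H^{1}$-norm, strong boundary conditions imply weak ones) give $\ed\Hoqmocgn(\Xi)\subseteq\ed\Dqmocgn(\Xi)\subseteq\ed\cDqmocgn(\Xi)$; and for $F\in\cDqmocgn(\Xi)$ one has $\ed F\in\cDqczgn(\Xi)$, since $\ed F$ is $\ed$-closed and, testing against $\cd\varphi$ with $\varphi$ a compactly supported smooth $(q+1)$-form vanishing near $\gamma_\tau$ and using $\cd\cd=0$, inherits the weak boundary condition on $\gamma_\nu$, so the circle $\ed\cDqmocgn(\Xi)\subseteq\cDqczgn(\Xi)$ closes. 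The same computation with strong test forms yields $\ed\Dqmocgn(\Xi)\subseteq\Dqczgn(\Xi)$, and since $\Dqczgn(\Xi)\subseteq\cDqczgn(\Xi)$ always, also $\Dqczgn(\Xi)=\cDqczgn(\Xi)$. Hence all five spaces coincide; each equals the closed subspace $\cDqczgn(\Xi)$ of $\Ltq(\Xi)$ (closed in $\Dq(\Xi)$ as an intersection of closed spaces, and there the graph norm reduces to the $\Ltq(\Xi)$-norm since $\ed$ vanishes on it), and $\S_{\ed}$ is the desired continuous right inverse of $\ed$ with values in forms of fixed compact support. I expect the main obstacle to be exactly the trace correction of the middle paragraph: the odd reflection delivers the vanishing of the \emph{tangential} trace of the potential on $\gamma_\nu$ for free, but promoting this to the vanishing of the \emph{full} trace — as required for membership in $\Hoqmocgn(\Xi)$ rather than merely $\Dqmocgn(\Xi)$ — needs the localized correction, and its interplay with the interface $\overline{\gamma_\tau}\cap\overline{\gamma_\nu}$ in the case $\gamma_\nu=B_{0,+}$ is the delicate point.
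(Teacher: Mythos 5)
Your overall strategy --- extend $H$ across $\gamma_\nu$, pull a regular potential out of Lemma \ref{Dqzpot} on the extended domain, and then correct that potential so that its full trace vanishes on $\gamma_\nu$ --- has the right shape, and your first half is sound: the odd reflection does land in $\Dqz(\widetilde\Xi)$, although the zero extension used by the paper gets there with a one-line computation and no change-of-variables bookkeeping. The genuine gap is exactly where you locate it: the passage from vanishing \emph{tangential} trace to vanishing \emph{full} trace is not actually carried out, and as written it does not go through. For $\gamma_\nu=B_{0,+}$ the gluing cut-off $\chi$ cannot be $\equiv1$ on a neighbourhood of $\ol{\gamma_\nu}$ while being compactly supported in $Q$, since $\ol{B_{0,+}}$ meets $\partial Q$; where $\chi\neq1$ your $\widehat E_1$ differs from the odd part by $\ed((1-\chi)\rho)$, whose tangential trace on $\gamma_\nu$ need not vanish, and the subsequent correction $\ed(\eta\psi)$ with $\psi|_{\gamma_\nu}=0$ can by construction only remove normal components of the trace, never a surviving tangential part. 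The proposed rescue --- ``a preliminary partition of unity localizing $H$ away from the interface'' --- is not available: multiplying $H$ by a cut-off destroys $\ed H=0$, and applying the cut-off to the potential instead changes its exterior derivative. The $q=1$ remark is also off: for a $0$-form the odd part already has vanishing full trace on the fixed hyperplane, so there is nothing left to correct, and the trace is $0$, not merely locally constant.

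The paper avoids all of this with a different second half. Extending $H$ by \emph{zero} (not oddly), the potential $E=\T_{\ed}\widetilde H$ satisfies $\ed E=\widetilde H=0$ on $\widehat\Xi$, so Lemma \ref{Dqzpot} applied once more, this time on $\widehat\Xi$, produces $F$ with $\ed F=E$ there; since $E\in\Hoqmo(\widehat\Xi)$, both $F$ and $\ed F$ are $\Ho$ on $\widehat\Xi$, the universal Stein extension $\E$ then gives $\ed(\E F)\in\Hoqmo(\rN)$, and $\S_{\ed}H:=E-\ed(\E F)$ vanishes \emph{identically} on $\widehat\Xi$ while still satisfying $\ed\S_{\ed}H=H$ in $\Xi$. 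Vanishing identically on the reflected side yields the full zero trace on $\gamma_\nu$ for free --- no tangential/normal splitting, no $H^{2}$ lifting of trace data, no edge analysis at $\ol{\gamma_\tau}\cap\ol{\gamma_\nu}$. With the odd extension this device is unavailable, because there $\ed E=\widetilde H$ does not vanish on $\widehat\Xi$. Your concluding chain of identities matches the paper's argument and is fine once the main construction is in place.
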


\begin{proof}
The case $\gamma_{\nu}=\emptyset$ is done in Lemma \ref{Dqzpot}. 
Hence let $\gamma_{\nu}=B_{0}$ or $\gamma_{\nu}=B_{0,+}$.
Suppose $H\in\cDqczgn(\Xi)$ and define $\widetilde H\in\Ltq(\widetilde \Xi)$
as extension of $H$ by zero to $\widehat{\Xi}$ by
\begin{align}
\label{defHschlange}
\widetilde H:=
\begin{cases}
H&\text{in }\Xi,\\
0&\text{in }\widehat \Xi.
\end{cases}
\end{align}
By definition of $\cDqczgn(\Xi)$ (weak boundary condition) 
it follows $\ed \widetilde H=0$ in $\widetilde \Xi$, i.e., $\widetilde H \in\Dqz(\widetilde \Xi)$.
Because $\widetilde \Xi$ is strong Lipschitz 
and topologically trivial, especially $\mathcal{H}^{q}_{N}(\widetilde{\Xi})=\{0\}$,
Lemma \ref{Dqzpot} yields a regular potential 
$ E = \T_{\ed}\widetilde H\in\Hoqmo(\rN)\cap\Dqmoz(\rN)$
with $\ed E=\widetilde H$ in $\widetilde \Xi$ and 
$$\norm{E}_{\Hoqmo(\rN)}
\leq c\norm{\widetilde{H}}_{\Ltq(\widetilde{\Xi})}
\leq c\norm{H}_{\Ltq(\Xi)}.$$
In particular, $E\in\Hoqmo(\widehat \Xi)$ and $\ed E = 0$ in $\widehat \Xi$,
i.e., $E\in\Hoqmo(\widehat\Xi)\cap\Dqmoz(\widehat\Xi)$.
Using Lemma \ref{Dqzpot} again, this time in $\widehat{\Xi}$,
we obtain $F=\T_{\ed}E\in\Hoqmt(\rN)\subset\Hoqmt(\widehat \Xi)$ with
$\ed F = E \text{ in }\widehat \Xi$ and 
$$\norm{F}_{\Hoqmt(\rN)}
\leq c\norm{E}_{\Ltq(\widehat{\Xi})}.$$
Since $ E \in \Hoqmo(\widehat \Xi) $ we have $F\in \Dqomt(\widehat \Xi)$.
Let $\E:\Dqomt(\widehat \Xi)\rightarrow\Dqomt(\rN)$ be the Stein extension operator
from Lemma \ref{steinexop}. Then
\begin{align*}
\Abb{\S_{\ed}}{\cDqczgn(\Xi)}{\Hoqmo(\rN)}{H}{E -\ed(\E F)}
\end{align*}
is linear and continuous as
\begin{align*}
\norm{\S_{\ed}H}_{\Hoqmo(\rN)}
&\leq\norm{E}_{\Hoqmo(\rN)}
+\norm{\E F}_{\Dqomt(\rN)}\\
&\leq\norm{E}_{\Hoqmo(\rN)}
+\norm{F}_{\Dqomt(\widehat\Xi)}
\leq\norm{E}_{\Hoqmo(\rN)}
\leq c\norm{H}_{\Ltq(\Xi)}.
\end{align*}
Since $\S_{\ed} H = 0$ in $\widehat \Xi$, we have $\S_{\ed} H|_{\gamma_{\nu}} = 0$, 
which means $\S_{\ed} H\in\Hoqmocgntr(\Xi)$.
Therefore, by Lemma \ref{Hoct} we see 
$\S_{\ed} H \in\Hoqmocgn(\Xi)\subset\Dqmocgn(\Xi)\subset\cDqmocgn(\Xi)$.
Moreover, $\ed(\S_{\ed} H)=\ed E=\widetilde{H}$ in $\widetilde\Xi$,
especially $\ed(\S_{\ed} H)=H$ in $\Xi$.
Finally we note
$$\ed\Hoqmocgn(\Xi)
\subset\ed\Dqmocgn(\Xi)
\subset\Dqczgn(\Xi),\,
\ed\cDqmocgn(\Xi)
\subset\cDqczgn(\Xi)
\subset\ed\Hoqmocgn(\Xi),$$
completing the proof.
\end{proof}

Again by Hodge-$ \star $-duality we obtain the following.

\begin{lem}[regular potential for $\cd$ with partial boundary condition on the half-cube]
\label{satzLtL6}
There exists a continuous linear operator
$$\S_{\cd}:\cDeqczgn(\Xi)\rightarrow\Hoqpo(\rN)\cap\Hoqpocgn(\Xi),$$
such that for all $H\in\cDeqczgn(\Xi)$
$$\cd\S_{\cd} H=H\quad\text{in }\Xi.$$
Especially 
$$\cDeqczgn(\Xi)=\Deqczgn (\Xi)=\cd\Hoqpocgn(\Xi)=\cd\Deqpocgn(\Xi)=\cd\cDeqpocgn(\Xi)$$
and the regular $\Hoqpocgn(\Xi)$-potential depends continuously on the data.
In particular, these spaces are closed subspaces of $\Ltq(\Xi)$ and $\S_{\cd}$ is a right inverse to $\cd$.
Without loss of generality, 
$\S_{\cd}$ maps to forms with a fixed compact support in $\rN$.
\end{lem}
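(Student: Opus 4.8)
The plan is to deduce Lemma~\ref{satzLtL6} from Lemma~\ref{satzD0L6} by Hodge-$\star$-duality, as announced. Recall that on the Euclidean domain $\Xi$ the operator $\star$ is a Hilbert space isometry from the $q$-forms onto the $(N-q)$-forms, simultaneously for the $\Lt$- and the $\Hoq$-inner products: it is an invertible constant-coefficient pointwise linear map on the Cartesian components, hence commutes with every $\partial^{\alpha}$ and preserves supports. Moreover $\star\star=\pm1$ on forms of fixed rank, and $\star$ interchanges $\ed$ and $\cd$ up to a sign through $\cd=(-1)^{N(q-1)}\star\ed\star$. The essential observation is that $\star$ turns the \emph{weak} normal boundary condition on $\gamma_{\nu}$ for $\cd$ into the \emph{weak} tangential boundary condition on $\gamma_{\nu}$ for $\ed$, and conversely: in the defining relations \eqref{defschwach} one substitutes $H=\pm\star\widehat H$ and $\cd H=\pm\star\ed\widehat H$ and lets the test form $\varphi$ run through $\Ciqmoct(\Xi)$, whose image under $\star$ is precisely the set of smooth $(N-q+1)$-forms with support away from $\gamma_{\tau}$; using that $\star$ is an $\Lt$-isometry one recognises in this the defining relation of the tangential condition on $\gamma_{\nu}$ in rank $N-q$. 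For the $\Hoq$-type conditions the corresponding statement is immediate from the trace description $u|_{\gamma_{\nu}}=0$ of Lemma~\ref{Hoct}, since $\star$ acts componentwise.

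Granting this, the construction runs as follows. Let $H\in\cDeqczgn(\Xi)$ and put $\widehat H:=\star H$, an $(N-q)$-form. Then $\ed\widehat H=\pm\star\cd H=0$ and $\widehat H$ satisfies the weak tangential boundary condition on $\gamma_{\nu}$, so $\widehat H$ lies in the domain of the operator $\S_{\ed}$ of Lemma~\ref{satzD0L6} \emph{read with $q$ replaced by $N-q$}. That lemma supplies a regular potential $\widehat E:=\S_{\ed}\widehat H$, an $(N-q-1)$-form in the corresponding $\Hoq$-space on $\rN$ satisfying the tangential boundary condition on $\gamma_{\nu}$, with fixed compact support, with $\ed\widehat E=\widehat H$ in $\Xi$, and with $\norm{\widehat E}\leq c\,\norm{\widehat H}$ in the relevant $\Hoq$- resp. $\Lt$-norms. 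Now set $\S_{\cd}H:=\sigma\,\star\widehat E=\sigma\,\star\S_{\ed}\star H$ with a sign $\sigma=\sigma(N,q)\in\{+1,-1\}$. Since $\star$ sends $(N-q-1)$-forms to $(q+1)$-forms and is an isometry preserving $\Hoq$, $\Lt$ and supports, $\S_{\cd}H\in\Hoqpo(\rN)\cap\Hoqpocgn(\Xi)$ has fixed compact support and depends linearly and continuously on $H$, with $\norm{\S_{\cd}H}_{\Hoqpo(\rN)}\leq c\,\norm{H}_{\Ltq(\Xi)}$. Choosing $\sigma$ so that the signs produced by $\cd\star\widehat E=\pm\star\ed\widehat E$ and by $\star\star H=\pm H$ cancel, one obtains $\cd\S_{\cd}H=\sigma\,\cd\star\widehat E=\star\ed\widehat E=\star\widehat H=\star\star H=H$ in $\Xi$. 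Hence $\S_{\cd}$ is a right inverse of $\cd$ with all asserted mapping properties.

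It remains to record the chain $\cDeqczgn(\Xi)=\Deqczgn(\Xi)=\cd\Hoqpocgn(\Xi)=\cd\Deqpocgn(\Xi)=\cd\cDeqpocgn(\Xi)$, the closedness of these spaces in $\Ltq(\Xi)$, and the right-inverse statement; all of these follow by applying the isometry $\star$ term by term to the corresponding identities of Lemma~\ref{satzD0L6} taken in rank $N-q$, using again $\star\ed=\pm\cd\star$ and the boundary-condition correspondence of the first paragraph, and noting that closedness is preserved since $\star$ is a homeomorphism of the relevant $L^2$-spaces. The only point I expect to require genuine care is the verification that $\star$ swaps the weak normal and weak tangential boundary conditions on the fixed piece $\gamma_{\nu}$ (for the strong and $\Hoq$-type conditions this is routine); the rest is bookkeeping of ranks and of the signs coming from $\star\star$ and from $\cd=(-1)^{N(q-1)}\star\ed\star$, which is mechanical.
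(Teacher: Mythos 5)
Your proposal is correct and follows exactly the route the paper takes: the paper proves Lemma \ref{satzLtL6} by simply invoking Hodge-$\star$-duality from Lemma \ref{satzD0L6}, and your argument is a careful write-out of precisely that duality, i.e., $\S_{\cd}=\pm\star\S_{\ed}\star$ in rank $N-q$, with the correct observation that $\star$ is a componentwise isometry preserving $\Hoq$-regularity and supports and that it interchanges the weak normal and weak tangential conditions on the same boundary piece $\gamma_{\nu}$.
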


\begin{figure}
\centering
{\footnotesize		
\tdplotsetmaincoords{60}{140}
\begin{tikzpicture}[scale=1.6,tdplot_main_coords]
	\draw [black] (0,0,0) -- (1,0,0);
	\draw [black] (0,0,0) -- (0,1,0);
	\draw [gray] (0,0,0) -- (0,0,1.4);
	\draw [red,dashed,line width=1pt] (1,1,-1) -- (1,-1,-1) -- (-1,-1,-1) -- (-1,1,-1) -- cycle;	
	\draw [red,dashed,line width=1pt] (1,1,0) -- (1,1,-1);							
	\draw [red,dashed,line width=1pt] (1,-1,0) -- (1,-1,-1);							
	\draw [red,dashed,line width=1pt] (-1,-1,0) -- (-1,-1,-1);						
	\draw [red,dashed,line width=1pt] (-1,1,0) -- (-1,1,-1);							
	\fill [white,opacity=.7] (1,1,0) -- (1,-1,0) -- (-1,-1,0) -- (-1,1,0) -- cycle;				
	\draw [black,line width=1.5pt] (1,1,0) -- (1,-1,0) -- (-1,-1,0) -- (-1,1,0) -- cycle;		
	\draw [black,dotted,line width=1pt] (1,-1,1) -- (1,-1,0);							
	\draw [black,dotted,line width=1pt] (1,1,1) -- (1,1,0);							
	\draw [black,dotted,line width=1pt] (-1,-1,1) -- (-1,-1,0);						
	\draw [black,dotted,line width=1pt] (-1,1,1) -- (-1,1,0);							
	\fill [white,opacity=.5] (-1,-1,1) -- (1,-1,1) -- (1,1,1) -- (-1,1,1) -- cycle;				
	\draw [black,dotted,line width=1pt] (-1,-1,1) -- (1,-1,1) -- (1,1,1) -- (-1,1,1) -- cycle;	
	\draw [black,->] (1,0,0) -- (2.5,0,0) 	node[anchor=south]{$x_1$};
	\draw [black,->] (0,1,0) -- (0,3,0) 	node[anchor=south]{$x_2$};
	\draw [black,->] (0,0,1) -- (0,0,2.3) 	node[anchor=east]{$x_3$};
  \draw [color=black,->] (3,1,0) -- (1.5,1,0);
  \draw [color=black,->] (3.1,1,2) -- (1.5,1,1);
  \draw [color=black,->] (0.7,2,3) -- (1.2,1,2);
	\draw (0.7,2.4,3.2) node [] {\Large$\widehat{\Xi}$};
  \draw (3.4,1.1,0.2) node [] {\Large$\Xi$};
  \draw (3.4,1,2.1) node [] {\Large$B_{0}$};
\end{tikzpicture}
\hspace*{5mm}
\tdplotsetmaincoords{60}{140}
\begin{tikzpicture}[scale=1.6,tdplot_main_coords]
	\draw [black] (0,0,0) -- (1,0,0);
	\draw [black] (0,0,0) -- (0,1,0);
	\draw [black] (0,0,0) -- (0,0,1);
	\draw [red,dashed,line width=1pt] (1,1,-1) -- (1,-1,-1) -- (-1,-1,-1) -- (-1,1,-1) -- cycle;	
	\draw [red,dashed,line width=1pt] (1,1,0) -- (1,1,-1);							
	\draw [red,dashed,line width=1pt] (1,-1,0) -- (1,-1,-1);							
	\draw [red,dashed,line width=1pt] (-1,-1,0) -- (-1,-1,-1);						
	\draw [red,dashed,line width=1pt] (-1,1,0) -- (-1,1,-1);							
	\fill [white,opacity=.8] (-1,-1,0) -- (1,-1,0) -- (1,1,0) -- (-1,1,0) -- cycle;				
	\draw [red,dashed,line width=1pt] (0,-1,0) -- (0,1,0) -- (-1,1,0) -- (-1,-1,0) -- cycle; 	
	\draw [black,line width=1.5pt] (1,1,0) -- (1,-1,0) -- (0,-1,0) -- (0,1,0) -- cycle;		
	\draw [black,dotted,line width=1pt] (0,-1,1) -- (0,-1,0);							
	\draw [black,dotted,line width=1pt] (1,-1,1) -- (1,-1,0);							
	\draw [black,dotted,line width=1pt] (0,1,0) -- (0,1,1);							
	\draw [black,dotted,line width=1pt] (1,1,1) -- (1,1,0);							
	\fill [white,opacity=.8] (0,-1,1) -- (1,-1,1) -- (1,1,1) -- (0,1,1) -- cycle;				
	\draw [black,dotted,line width=1pt] (0,-1,1) -- (1,-1,1) -- (1,1,1) -- (0,1,1) -- cycle;	
	\draw [black,->] (1,0,0) -- (2.5,0,0) node[anchor=south]{$x_1$};
	\draw [black,->] (0,1,0) -- (0,3,0) node[anchor=south]{$x_2$};
	\draw [black,->] (0,0,1) -- (0,0,2) node[anchor=east]{$x_3$};
  \draw [color=black,->] (3,1,0) -- (1.5,1,0);
  \draw [color=black,->] (1,2,2.5) -- (1,1,1.4);
  \draw [color=black,->] (3.1,1,2) -- (1.5,1,1);
	\draw (1,2.4,2.8) node [] {\Large$\widehat{\Xi}$};
  \draw (3.4,1.1,0.2) node [] {\Large$\Xi$};
  \draw (3.5,1,2.1) node [] {\Large$B_{0,+}$};
\end{tikzpicture}
}
\caption{The half-cube $\Xi=B_{-}$, extended by $\widehat\Xi$ to the polygonal domain $\widetilde\Xi$,
and the rectangles $\gamma_{\nu}=B_{0}$ and $\gamma_{\nu}=B_{0,+}$.} 
\label{fig:cube}
\end{figure}
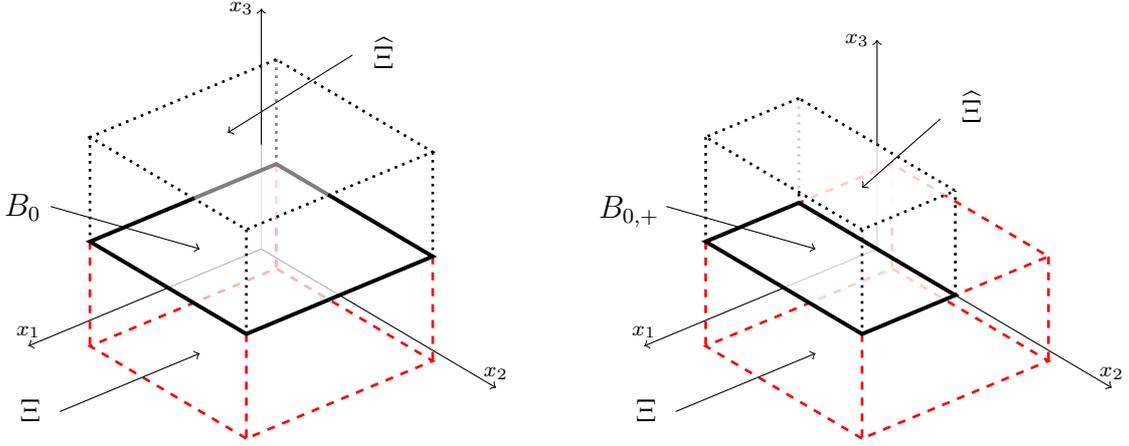

\subsection{Weak and Strong Boundary Conditions Coincide for the Half-Cube}
\label{secstrweakXi}

Now the two main density results immediately follow.
We note that this has already been proved for the $\Hoqom$-spaces in Lemma \ref{Hoct}, i.e., $\cHoqct(\om)=\Hoqct(\om)$.

\begin{lem}[weak and strong boundary conditions coincide for the half-cube]
\label{rss}
$\cDqcgn(\Xi)=\Dqcgn(\Xi)$ and $\cDeqcgn(\Xi) =\Deqcgn(\Xi)$.
\end{lem}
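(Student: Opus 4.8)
The plan is to derive the two density statements directly from the regular potential lemmas just established, exactly as is done for the zero-boundary-condition subspaces inside Lemmas \ref{satzD0L6} and \ref{satzLtL6}, but now lifting the conclusion from the kernels to the full spaces by means of the refined Helmholtz decompositions. Since the two claims are Hodge-$\star$-dual to one another (note $\star\Dqom=\Degen{N-q}{}{}(\om)$ and $\star\Deqom=\Dgen{N-q}{}{}(\om)$, and $\star$ exchanges the boundary constellations $\gamma_\tau\leftrightarrow\gamma_\nu$), it suffices to prove, say, $\cDqcgn(\Xi)=\Dqcgn(\Xi)$; the other equality follows by applying $\star$ and renaming $q\mapsto N-q$. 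Recall also that the inclusion $\Dqcgn(\Xi)\subset\cDqcgn(\Xi)$ is trivial from the definitions, so only the reverse inclusion needs work.

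First I would take $E\in\cDqcgn(\Xi)$ and split it using a Helmholtz-type decomposition of $E$ relative to $\ed$. The key point is that $\ed E\in\Ltqpo(\Xi)$ is itself $\ed$-closed and, because $\widetilde\Xi$ (hence $\Xi$) is topologically trivial, lies in the range of $\ed$; more precisely one wants to write $E = E_0 + \ed\Phi$ where $E_0\in\cDqczgn(\Xi)=\Dqczgn(\Xi)$ (the last equality is Lemma \ref{satzD0L6}) and $\ed\Phi$ is a regular remainder. The clean way to arrange this: apply $\S_{\cd}$-type reasoning is not available here, so instead observe that $\ed E\in\cDeqpoczgn$-analog; rather, the cleanest route is to note $\ed E$ lies in the space $\Dgen{q+1}{}{}_{0,\gamma_\tau}(\Xi)$ — it inherits the weak tangential boundary condition from $E$ by the very definition \eqref{defschwach} together with $\ed\ed=0$ — so by Lemma \ref{satzD0L6} applied at rank $q+1$ there is $\Psi\in\Hoqcgn(\Xi)$ with $\ed\Psi=\ed E$ in $\Xi$. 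Then $E-\Psi\in\cDqczgn(\Xi)=\Dqczgn(\Xi)$ by Lemma \ref{satzD0L6} at rank $q$, and since $\Psi\in\Hoqcgn(\Xi)\subset\Dqcgn(\Xi)$ by definition \eqref{defstark}, we conclude $E=(E-\Psi)+\Psi\in\Dqcgn(\Xi)$, which is the desired inclusion.

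The Hodge-$\star$ dual of this argument gives $\cDeqcgn(\Xi)=\Deqcgn(\Xi)$: for $H\in\cDeqcgn(\Xi)$ one has $\cd H\in\cDgen{q-1}{}{}_{\gamma_\nu}$-type space with the weak normal condition, Lemma \ref{satzLtL6} at rank $q-1$ furnishes $\Theta\in\Hoqcgn(\Xi)$ with $\cd\Theta=\cd H$, and $H-\Theta\in\cDeqczgn(\Xi)=\Deqczgn(\Xi)$ by Lemma \ref{satzLtL6} at rank $q$, so $H\in\Deqcgn(\Xi)$.

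The step I expect to be the main obstacle is verifying that $\ed E$ really carries the weak tangential boundary condition on $\gamma_\tau$ — i.e. that $\ed E\in\cDgen{q+1}{}{}_{\gamma_\tau}(\Xi)$ — directly from $E\in\cDqcgn(\Xi)$. This is where one must be careful with the definition \eqref{defschwach}: for a test form $\varphi\in\Ciqptoc_{\gamma_\nu}(\Xi)$ (i.e. rank $q+2$, vanishing near $\gamma_\nu$) one needs $\scpLt{\ed E}{\cd\varphi}=-\scpLt{\ed\ed E}{\varphi}=0$, and since $\cd\varphi$ is again a co-exact test form vanishing near $\gamma_\nu$, this follows from $E\in\cDqcgn(\Xi)$ provided one knows $\ed E=0$-type cancellation, or more directly from an integration-by-parts identity relating $\scpLt{\ed E}{\cd\varphi}$ to $\scpLt{E}{\cd\cd\varphi}=0$; getting the test-form classes to match up with the ones allowed in \eqref{defschwach} (smooth versus the closures $\Deqpcn$, $\Dqct$ mentioned after \eqref{defschwach}) is the delicate bookkeeping. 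Everything else is a routine assembly of the cited potential lemmas plus the trivial inclusions $\Hoqcgn\subset\Dqcgn$, $\Hoqcgn\subset\Deqcgn$.
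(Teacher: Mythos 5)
Your proposal is correct and follows essentially the same route as the paper: the paper's proof likewise observes that $\ed E$ inherits the weak boundary condition (so $\ed E\in\cDqpoczgn(\Xi)$), applies Lemma \ref{satzD0L6} at rank $q+1$ to produce $H=\S_{\ed}\ed E\in\Hoqcgn(\Xi)$ with $\ed H=\ed E$, and concludes via $E-H\in\cDqczgn(\Xi)=\Dqczgn(\Xi)$, with the second identity by Hodge-$\star$-duality. Only watch your boundary-part labels: the condition is carried on $\gamma_{\nu}$ throughout (you write $\gamma_{\tau}$ in a few places), and the step you flag as delicate is exactly the routine substitution $\varphi\mapsto\cd\varphi$ in \eqref{defschwach} combined with $\cd\cd=0$, as you indicate.
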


\begin{proof}
Suppose $E\in\cDqcgn(\Xi)$ and thus $\ed E\in\cDqpoczgn(\Xi)$. By Lemma \ref{satzD0L6} there exists 
$H = \S_{\ed} \ed E\in\Hoqcgn(\Xi)$ with $\ed H= \ed E$. 
By Lemma \ref{satzD0L6} we get $E-H\in\cDqczgn(\Xi)=\Dqczgn(\Xi)$
and hence $E\in\Dqcgn(\Xi)$.
\end{proof}

\section{Weck's Selection Theorem}
\label{seccompemb}

\subsection{The Compact Embedding for the Half-Cube}
\label{seccompembhalfcube}

First we show the main result on the half-cube $\Xi=B_{-}$ with the special boundary patches
$$\gamma_{\nu}=\emptyset,\quad\gamma_{\nu}=B_{0}\quad\text{or}\quad\gamma_{\nu}=B_{0,+}$$
from the latter section. 
To this end let $\eps$ be an admissible transformation on $\Ltq(\Xi)$
and let us consider the densely defined and closed (unbounded) linear operator
$$\ed_{\tau}^{q-1}:\Dqmocgt(\Xi)\subset\Ltqmo(\Xi)\rightarrow\Ltqeps(\Xi)\,;\quad
E\mapsto\ed E$$
together with its (Hilbert space) adjoint
$$-\cd^q_{\nu}\eps:=(\ed^{q-1}_{\tau})^*:\eps^{-1}\cDeqcgn(\Xi)\subset\Ltqeps(\Xi)\rightarrow\Ltqmo(\Xi)\,;\quad
H\mapsto-\cd\eps H.$$
Note that by Lemma \ref{rss} we have $\cDeqcgn(\Xi)=\Deqcgn(\Xi)$. 
Here, $\Ltqeps(\Xi)$ denotes $\Ltq(\Xi)$ equipped with the inner product
$\scp{\,\cdot\,}{\,\cdot\,}_{\LtqepsXi} := \scpLtqXi{\eps\,\cdot\,}{\,\cdot\,}$.
Let $\oplus_{\eps}$ denote the orthogonal sum with respect to the $\Ltqeps$-scalar product. 
The projection theorem yields immediately:

\begin{lem}[regular Helmholtz decompositions for the half-cube]
\label{lemHDXi}
The Helmholtz decompositions
\begin{align*}
\Ltqeps(\Xi) 
&= \Dqczgt (\Xi)\oplus_{\eps}\eps^{-1}\Deqczgn(\Xi),
&
\Dqczgt (\Xi)&=\ed\Hoqmocgt(\Xi),
&
\Deqczgn(\Xi)&=\cd\Hoqpocgn(\Xi)
\end{align*}
hold. Moreover, the refined Helmholtz decompositions
\begin{align*}
\Dqcgt(\Xi)
&=\ed\Hoqmocgt(\Xi)\oplus_{\eps}\big(\Dqcgt(\Xi)\cap\eps^{-1}\cd\Hoqpocgn(\Xi)\big),\\
\eps^{-1}\Deqcgn(\Xi)
&=\big(\ed\Hoqmocgt(\Xi)\cap\eps^{-1}\Deqcgn(\Xi)\big)\oplus_{\eps}\eps^{-1}\cd\Hoqpocgn(\Xi),\\
\Dqcgt(\Xi)\cap\eps^{-1}\Deqcgn(\Xi)
&=\big(\ed\Hoqmocgt(\Xi)\cap\eps^{-1}\Deqcgn(\Xi)\big)\oplus_{\eps}\big(\Dqcgt(\Xi)\cap\eps^{-1}\cd\Hoqpocgn(\Xi)\big)
\end{align*}
are valid, and the respective regular potentials,
given by the operators $\S_{\ed}$ and $\S_{\cd}$
from Lemma \ref{satzD0L6} and Lemma \ref{satzLtL6}, respectively,
depend continuously on the data.
\end{lem}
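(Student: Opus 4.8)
The statement packages together the classical Helmholtz decomposition of $\Ltqeps(\Xi)$, a refinement coming from the regular potentials, and the continuity of the associated potential operators. The natural route is to feed the already‑established facts into the projection theorem. First I would record that the operator $\ed^{q-1}_\tau$ above is densely defined and closed, so that the projection theorem applied to $\ed^{q-1}_\tau$ and its adjoint $-\cd^q_\nu\eps$ yields the orthogonal decomposition
\begin{align*}
\Ltqeps(\Xi)=\overline{\ed^{q-1}_\tau\,\Dqmocgt(\Xi)}\oplus_{\eps}N(-\cd^q_\nu\eps)
=\overline{\ed\Dqmocgt(\Xi)}\oplus_{\eps}\eps^{-1}\Deqczgn(\Xi).
\end{align*}
By Lemma \ref{satzD0L6} (applied to $\Xi$ with $\gamma_\tau$ in place of $\gamma_\nu$, which is legitimate because $\Xi$, $\gamma_\tau$, $\gamma_\nu$ are all strong Lipschitz) the range $\ed\Dqmocgt(\Xi)=\Dqczgt(\Xi)=\ed\Hoqmocgt(\Xi)$ is already closed, so the closure bar can be dropped; dually, Lemma \ref{satzLtL6} gives $\Deqczgn(\Xi)=\cd\Hoqpocgn(\Xi)$. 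This establishes the first line.

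**The refined decompositions.** For the second block I would intersect the basic decomposition with the subspaces $\Dqcgt(\Xi)$, respectively $\eps^{-1}\Deqcgn(\Xi)$, and check that the decomposition is compatible with the intersection. For the first refined identity: if $E\in\Dqcgt(\Xi)$, write $E=E_0\oplus_\eps E_1$ with $E_0\in\Dqczgt(\Xi)=\ed\Hoqmocgt(\Xi)$ and $E_1\in\eps^{-1}\Deqczgn(\Xi)$; since $E,E_0\in\Dqcgt(\Xi)$ we get $E_1=E-E_0\in\Dqcgt(\Xi)$, hence $E_1\in\Dqcgt(\Xi)\cap\eps^{-1}\cd\Hoqpocgn(\Xi)$, and conversely both summands lie in $\Dqcgt(\Xi)$. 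The second refined identity is the Hodge‑$\star$/$\eps$‑dual statement, obtained the same way starting from $H\in\eps^{-1}\Deqcgn(\Xi)$ and using that $\eps^{-1}\Deqczgn(\Xi)\subset\eps^{-1}\Deqcgn(\Xi)$ together with $\ed\Hoqmocgt(\Xi)=\Dqczgt(\Xi)$. The third refined identity is just the intersection of the first two: $\Dqcgt(\Xi)\cap\eps^{-1}\Deqcgn(\Xi)=\big(\ed\Hoqmocgt(\Xi)\cap\eps^{-1}\Deqcgn(\Xi)\big)\oplus_\eps\big(\Dqcgt(\Xi)\cap\eps^{-1}\cd\Hoqpocgn(\Xi)\big)$, which follows because the splitting of any $E$ in the intersection produced in the previous two arguments coincides (the projector $\pi$ onto the first summand is the same $\Ltqeps$‑orthogonal projector in all three cases).

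**Continuity of the potentials.** Finally, "the respective regular potentials depend continuously on the data" means: for $E\in\Dqczgt(\Xi)$ one has the $\Hoqmocgt(\Xi)$‑potential $\S_{\ed}E$ with $\ed\S_{\ed}E=E$ and $\norm{\S_{\ed}E}_{\Ho} \le c\,\norm{E}_{\Lt(\Xi)}$, and symmetrically $\S_{\cd}$ on $\Deqczgn(\Xi)$; these are exactly the assertions of Lemmas \ref{satzD0L6} and \ref{satzLtL6}, so nothing new is needed. One should only note that in the refined decompositions the first summand of $E\in\Dqcgt(\Xi)$ is $\S_{\ed}(\ed E)$ up to an element of $\Dqczgt(\Xi)$ — more precisely, $\ed E\in\Dqpoczgt(\Xi)$, so $\S_{\ed}(\ed E)\in\Hoqcgt(\Xi)$ with $\ed\S_{\ed}(\ed E)=\ed E$, hence $E-\S_{\ed}(\ed E)\in\Dqczgt(\Xi)$ and a further application of $\S_{\ed}$ (now to a $\ed$‑closed form) produces the genuine $\Hoqmocgt$‑potential; the bound $\norm{\S_{\ed}(\ed E)}_{\Ho}\le c\,\norm{\ed E}_{\Lt}$ gives continuity in the graph norm.

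**Main obstacle.** The only genuinely non‑formal point is the very first one: identifying the Hilbert‑space adjoint of $\ed^{q-1}_\tau$ with $-\cd^q_\nu\eps$ on $\eps^{-1}\cDeqcgn(\Xi)$, and then knowing $\cDeqcgn(\Xi)=\Deqcgn(\Xi)$ so that the adjoint has the stated strong‑boundary‑condition domain. The adjoint identification is precisely the definition of the weak boundary space $\cDeqcgn(\Xi)$ (with the $\eps$‑weight absorbed into the inner product), and the equality with the strong space is Lemma \ref{rss}; once these two inputs are in place everything else is bookkeeping with the projection theorem and the two potential lemmas.
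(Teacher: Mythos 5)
Your proposal is correct and follows essentially the same route as the paper: apply the projection theorem to the closed operator $\ed^{q-1}_{\tau}$ and its adjoint to get $\Ltqeps(\Xi)=\overline{\ed\Dqmocgt(\Xi)}\oplus_{\eps}\eps^{-1}\cDeqczgn(\Xi)$, then use Lemmas \ref{satzD0L6} and \ref{satzLtL6} (together with Lemma \ref{rss} to replace the weak by the strong boundary space) to remove the closure bar and identify the two summands, after which the refined decompositions and the continuity of the potentials follow by intersecting and by the bounds already contained in those lemmas. The paper compresses the refined decompositions into ``the other assertions follow immediately,'' so your explicit verification that the orthogonal splitting respects the intersections is exactly the intended bookkeeping.
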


\begin{proof}
The projection theorem yields
$\Ltqeps(\Xi)=\overline{\ed\Dqmocgt(\Xi)}\oplus_{\eps}\eps^{-1}\cDeqczgn(\Xi)$.
Furthermore, 
$$\overline{\ed\Dqmocgt(\Xi)} = \ed\Dqmocgt(\Xi) = \ed\Hoqmot(\Xi)=\Dqczgt (\Xi)$$ 
by Lemma \ref{satzD0L6} and 
$$\cDeqczgn(\Xi) =\Deqczgn(\Xi)=\cd\Hoqpon(\Xi)$$ 
by Lemma \ref{satzLtL6}. 
The other assertions follow immediately.
\end{proof}

\begin{lem}[Weck's selection theorem for the half-cube]
\label{HSG}
The embedding $\Dqcgt(\Xi)\cap\eps^{-1}\Deqcgn(\Xi) \hookrightarrow \Ltqeps(\Xi)$ is compact.
\end{lem}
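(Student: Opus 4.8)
The idea is to reduce the compactness of the embedding $\Dqcgt(\Xi)\cap\eps^{-1}\Deqcgn(\Xi)\hookrightarrow\Ltqeps(\Xi)$ to Rellich's selection theorem via the regular potentials constructed in Lemmas \ref{satzD0L6} and \ref{satzLtL6}. First I would take a bounded sequence $(E_n)$ in $\Dqcgt(\Xi)\cap\eps^{-1}\Deqcgn(\Xi)$ and aim to extract an $\Ltqeps(\Xi)$-convergent subsequence. Using the refined Helmholtz decomposition from Lemma \ref{lemHDXi},
\begin{align*}
\Dqcgt(\Xi)\cap\eps^{-1}\Deqcgn(\Xi)
&=\big(\ed\Hoqmocgt(\Xi)\cap\eps^{-1}\Deqcgn(\Xi)\big)\oplus_{\eps}\big(\Dqcgt(\Xi)\cap\eps^{-1}\cd\Hoqpocgn(\Xi)\big),
\end{align*}
I would split $E_n = A_n + B_n$ orthogonally (with respect to $\scp{\,\cdot\,}{\,\cdot\,}_{\LtqepsXi}$) into these two closed summands, and handle each piece separately. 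Since the decomposition is $\eps$-orthogonal, $\|A_n\|$ and $\|B_n\|$ are each controlled by $\|E_n\|$, and because $\ed E_n = \ed B_n$ and $\cd\eps E_n = \cd\eps A_n$ (using $\ed A_n = 0$, $\cd\eps B_n = 0$), both $(A_n)$ and $(B_n)$ are bounded in the graph norm of $\Dqcgt(\Xi)\cap\eps^{-1}\Deqcgn(\Xi)$.

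\textbf{Treating the two summands.} For the first summand, $A_n\in\ed\Hoqmocgt(\Xi)\cap\eps^{-1}\Deqcgn(\Xi)$, so $\eps A_n\in\Deqcgn(\Xi)$ and $\eps A_n$ lies (up to lower-rank pieces) in the range $\cd\Hoqpocgn(\Xi)$ via Lemma \ref{lemHDXi}; more precisely, I would write $\eps A_n=\cd\S_{\cd}(\eps A_n\text{-component})$ and use the continuity of $\S_{\cd}:\cDeqczgn(\Xi)\to\Hoqpocgn(\Xi)$ to obtain $\Ho$-bounded potentials $\Phi_n:=\S_{\cd}(\cdots)\in\Hoqpo(\rN)$ with $\cd\Phi_n=\eps A_n$ in $\Xi$. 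Symmetrically, for the second summand $B_n\in\Dqcgt(\Xi)\cap\eps^{-1}\cd\Hoqpocgn(\Xi)$, I would use $\S_{\ed}$ from Lemma \ref{satzD0L6} to write $B_n=\ed\Psi_n$ with $\Psi_n:=\S_{\ed}(\ed B_n)\in\Hoqmo(\rN)$, $\Ho$-bounded. Actually the cleanest route is to apply the potential operators directly to $\ed E_n$ and $\cd\eps E_n$: set $\Psi_n:=\S_{\ed}(\ed E_n)\in\Hoqmocgt(\Xi)$ (noting $\ed E_n\in\cDqpoczgt(\Xi)$) and $\Phi_n:=\S_{\cd}(\cd\eps E_n)\in\Hoqpocgn(\Xi)$ (noting $\cd\eps E_n\in\cDeqmoczgn(\Xi)$ after shifting rank), both $\Ho$-bounded by the boundedness of $(E_n)$.

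\textbf{Applying Rellich and concluding.} By Rellich's selection theorem, $\Ho$-bounded sequences in $\Hoqmo(\rN)$ and $\Hoqpo(\rN)$ with fixed compact support have $\Lt$-convergent subsequences; so after passing to a subsequence, $\Psi_n\to\Psi$ in $\Ltqmo(\Xi)$ and $\Phi_n\to\Phi$ in $\Ltqpo(\Xi)$. Then I would estimate $\|E_n-E_m\|_{\LtqepsXi}^2$: using the Helmholtz decomposition, the $\eps$-orthogonal pieces satisfy
\begin{align*}
\scp{E_n-E_m}{E_n-E_m}_{\LtqepsXi}
&=\scpLtqXi{\eps(E_n-E_m)}{A_n-A_m}+\scpLtqXi{\eps(E_n-E_m)}{B_n-B_m},
\end{align*}
and each term is handled by integrating by parts, moving a derivative onto $\Psi_n-\Psi_m$ or $\Phi_n-\Phi_m$ (which converge in $\Lt$) against the $\Lt$-bounded quantities $\ed E_n$, $\cd\eps E_n$. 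Concretely $\scpLtqXi{\eps A_n}{B_n-B_m}=\scpLtqXi{\cd\Phi_n}{\ed(\Psi_n-\Psi_m)}=\scpLtqpoXi{\Phi_n}{\ed\ed(\cdots)}$-type manipulations — more carefully, one uses that $B_n-B_m=\ed(\Psi_n-\Psi_m)$ and $\eps A_n\in\Deqcgn(\Xi)$ with vanishing trace to get $\scpLtqXi{\eps A_n}{\ed(\Psi_n-\Psi_m)}=-\scpLtqmoXi{\cd\eps A_n}{\Psi_n-\Psi_m}=-\scpLtqmoXi{\cd\eps E_n}{\Psi_n-\Psi_m}\to 0$, and symmetrically for the other term with $\ed E_n$ against $\Phi_n-\Phi_m$. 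This shows $(E_n)$ is Cauchy in $\Ltqeps(\Xi)$, completing the proof.

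\textbf{Main obstacle.} The delicate point is bookkeeping of the boundary conditions through the integrations by parts: one must verify that the traces match up so that no boundary terms appear — i.e., that $\eps A_n\in\Deqcgn(\Xi)$ pairs correctly against $\Psi_n-\Psi_m\in\Hoqmocgt(\Xi)$ (complementary boundary parts $\gamma_\nu$ and $\gamma_\tau$), and likewise $B_n=\ed\Psi_n\in\Dqcgt(\Xi)$ against $\Phi_n-\Phi_m\in\Hoqpocgn(\Xi)$. This is exactly where Lemmas \ref{satzD0L6}, \ref{satzLtL6} and \ref{rss} (weak $=$ strong boundary conditions, so the integration-by-parts formulas are legitimate) are essential; everything else is Rellich plus the continuity of $\S_{\ed}$, $\S_{\cd}$.
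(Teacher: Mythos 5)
Your overall strategy---the $\eps$-orthogonal refined Helmholtz decomposition of Lemma \ref{lemHDXi}, regular potentials from Lemmas \ref{satzD0L6} and \ref{satzLtL6}, Rellich's theorem applied to the potentials, and one integration by parts per summand---is exactly the paper's. However, you attach the potential operators to the wrong summands, and this invalidates the concluding estimate. The first summand $A_n$ lies in $\ed\Hoqmocgt(\Xi)\cap\eps^{-1}\Deqcgn(\Xi)$, i.e.\ in the range of $\ed$, so its regular potential is the $(q-1)$-form $\alpha_n:=\S_{\ed}A_n\in\Hoqmocgt(\Xi)$ with $A_n=\ed\alpha_n$. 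Your claim $\eps A_n=\cd\Phi_n$ with $\Phi_n\in\Hoqpocgn(\Xi)$ cannot hold: it would force $\cd\eps A_n=\cd\eps E_n=0$, while $A_n$ is precisely the summand carrying the co-derivative of $E_n$. Symmetrically, $B_n$ satisfies $\eps B_n=\cd\beta_n$ with the $(q+1)$-form $\beta_n:=\S_{\cd}(\eps B_n)\in\Hoqpocgn(\Xi)$, and writing $B_n=\ed\Psi_n$ would force $\ed E_n=0$. Your ``cleanest route'' does not repair this: $\S_{\ed}(\ed E_n)$ is a $q$-form (not an element of the space $\Hoqmocgt(\Xi)$ of $(q-1)$-forms) satisfying $\ed\S_{\ed}(\ed E_n)=\ed E_n$; it is the first term of a regular decomposition of $E_n$ itself, not a potential for either Helmholtz summand, so the identity $B_n-B_m=\ed(\Psi_n-\Psi_m)$ on which your final integration by parts rests is false. (The cross term $\scpLtqXi{\eps A_n}{B_n-B_m}$ you manipulate also vanishes identically by $\eps$-orthogonality, so it is not the quantity that needs estimating.)

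The correct bookkeeping, which is what the paper carries out, reads
$\normLtqepsXi{A_n-A_m}^2
=\scpLtqepsXi{A_n-A_m}{\ed(\alpha_n-\alpha_m)}
=-\scpLtqmoXi{\cd\eps(E_n-E_m)}{\alpha_n-\alpha_m}\to0$,
since $(\cd\eps E_n)$ is bounded in $\Ltqmo(\Xi)$ and $(\alpha_n)$ converges in $\Ltqmo(\Xi)$ after Rellich, and analogously
$\normLtqepsXi{B_n-B_m}^2
=-\scp{\ed(E_n-E_m)}{\beta_n-\beta_m}_{\Ltqpo(\Xi)}\to0$.
Your instinct that the only delicate point is the trace matching in the integration by parts is not quite right: the boundary conditions do pair up correctly (that part of your discussion is fine), but the decisive step fails as written because of the swapped roles of $\S_{\ed}$ and $\S_{\cd}$ and the resulting rank mismatch of the potentials.
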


\begin{proof}
Let $(H_n)_{n\in\nz}$ be a bounded sequence in $\Dqcgt(\Xi)\cap\eps^{-1}\Deqcgn(\Xi)$. 
By Lemma \ref{lemHDXi} 
we can decompose
\begin{align*}
H_n = H_n^{\ed} + H_n^{\cd} = \ed E_n^{\ed} + \eps^{-1}\cd E_n^{\cd} 
\in \big(\ed\Hoqmocgt(\Xi) \cap \eps^{-1}\Deqcgn(\Xi)\big) 
\oplus_{\eps}\big( \Dqcgt(\Xi)\cap\eps^{-1}\cd\Hoqpocgn(\Xi)\big),
\end{align*}
with $E_n^{\ed}=\S_{\ed}H_n^{\ed}$ and $E_n^{\cd}=\S_{\cd}H_n^{\cd}$.
Then $\ed H^{\cd}_{n} = \ed H_{n}$ and $\cd\eps H^{\ed}_{n} = \cd\eps H_{n}$
as well as
\begin{align*}
\normHoqmoxi{E_n^{\ed}}\leq c\,\normLtqXi{H_n^{\ed}}\leq c\,\normLtqepsXi{H_n},\\
\normHoqpoxi{E_n^{\cd}}\leq c\,\normLtqXi{H_n^{\cd}}\leq c\,\normLtqepsXi{H_n}.
\end{align*}
By Rellich's selection theorem and without loss of generality
$ (E_n^{\ed}) $ and $ (E_n^{\cd}) $ converge in $ \Ltqmo(\Xi) $ and $ \Ltqpo(\Xi) $, respectively.
Moreover, 
\begin{align*}
\normLtqepsXi{H_n^{\ed} - H_m^{\ed}}^2
&=\scpLtqepsXi{H_n^{\ed} - H_m^{\ed}}{\ed(E_n^{\ed}-E_m^{\ed})}\\
&=-\scpLtqmoXi{\cd\eps(H_n^{\ed} - H_m^{\ed})}{E_n^{\ed}-E_m^{\ed}}
\leq c\,\normLtqmoXi{E_n^{\ed}-E_m^{\ed}},\\
\normLtqepsXi{H_n^{\cd} - H_m^{\cd}}^2
&=\scpLtqepsXi{H_n^{\cd} - H_m^{\cd}}{\eps^{-1}\cd(E_n^{\cd}-E_m^{\cd})}\\
&=-\scp{\ed(H_n^{\cd} - H_m^{\cd})}{E_n^{\cd}-E_m^{\cd}}_{\Ltqpo(\Xi)}
\leq c\,\norm{E_n^{\cd}-E_m^{\cd}}_{\Ltqpo(\Xi)}.
\end{align*}
Thus $(H_n^{\ed})$ and $(H_n^{\cd})$ converge in $\Ltqeps(\Xi)$
and altogether $(H_n)$ converges in $\Ltqeps(\Xi)$ as well.
\end{proof}

\begin{rem}
\label{HSGrem}
The use of Helmholtz decompositions and regular potentials in the proof of Lemma \ref{HSG}
demonstrates the main idea behind an elegant proof of a compact embedding.
This general idea carries over to proofs of compact embeddings related 
to other kinds of Hilbert complexes as well,
arising, e.g., in elasticity, general relativity, or biharmonic problems,
see for example \cite{paulyzulehnerbiharmonic}.
\end{rem}

\subsection{The Compact Embedding for Weak Lipschitz Domains}
\label{mcpweaklip}

The aim of this section is to transfer Lemma \ref{HSG} to arbitrary weak Lipschitz 
pairs $(\om,\Gamma_{\tau})$. 
To this end we will employ a technical lemma, whose proof 
is sketched in \cite[Section 3]{picardcomimb} and \cite[Remark 2]{wecktrace}.
We give a detailed proof in the appendix.
Let us consider the following situation:
Let $\Theta$, $\widetilde{\Theta}$ be two bounded domains in $\rN$
with boundaries $\Upsilon:=\p\Theta$, $\widetilde\Upsilon:=\p\widetilde\Theta$
and let $\Upsilon_0\subset\Upsilon$ be relatively open.
Moreover, let 
$$\phi:\Theta\to\widetilde\Theta,\qquad
\psi:=\phi^{-1}:\widetilde\Theta\to\Theta$$
be Lipschitz diffeomorphisms, this is,
$\phi\in\Czo(\Theta,\widetilde\Theta)$ and $\psi=\phi^{-1}\in\Czo(\widetilde\Theta,\Theta)$. 
Then $\widetilde\Theta=\phi(\Theta)$, $\widetilde\Upsilon=\phi(\Upsilon)$
and we define $\widetilde\Upsilon_0:=\phi(\Upsilon_0)$.

\begin{lem}[pull-back lemma for Lipschitz transformations]
\label{lemtrafo}
Let $E\in\cDgenc{q}{\Upsilon_0}(\Theta)$ resp. $\Dgenc{q}{\Upsilon_0}(\Theta)$ 
and $H\in\eps^{-1}\cDegenc{q}{\Upsilon_0}(\Theta)$ resp. $\eps^{-1}\Degenc{q}{\Upsilon_0}(\Theta)$
for an admissible transformation $\eps$ on $\Ltq(\Theta)$. Then
\begin{align*}
\psi^* E&\in\cDgenc{q}{\widetilde\Upsilon_0}(\widetilde\Theta)
\text{ resp. }\Dgenc{q}{\widetilde\Upsilon_0}(\widetilde\Theta)&
&\text{and}&
\ed\psi^* E&=\psi^*\ed E,\\
\psi^*H&\in\mu^{-1}\cDegenc{q}{\widetilde\Upsilon_0}(\widetilde\Theta)
\text{ resp. }\mu^{-1}\Degenc{q}{\widetilde\Upsilon_0}(\widetilde\Theta)&
&\text{and}&
\cd\mu\psi^* H&=\pm\star\ed\psi^*\star\eps H = \pm\star\psi^*\star\cd\eps H,
\end{align*}
where $\mu:=(-1)^{qN-1}\star\psi^*\star\eps\phi^*$ is an admissible transformation. Moreover, there exists $c>0$, independent of $E$ and $H$, such that
$$\norm{\psi^*E}_{\Dq(\widetilde\Theta)}\leq c\norm{E}_{\Dq(\Theta)}, \qquad 
\norm{\psi^*H}_{\mu^{-1}\Deq(\widetilde\Theta)}\leq c\norm{H}_{\eps^{-1}\Deq(\Theta)}.$$
\end{lem}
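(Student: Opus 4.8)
The plan is to reduce the statement to the well-understood behaviour of pull-backs on smooth forms and then pass to the limit using the density definitions of the spaces. First I would recall the basic calculus facts: for a Lipschitz diffeomorphism $\psi$, the pull-back $\psi^*$ is a bounded operator $\Ltq(\widetilde\Theta)\to\Ltq(\Theta)^{-1}$... more precisely $\psi^*:\Ltq(\Theta)\to\Ltq(\widetilde\Theta)$ is bounded with bounded inverse $\phi^*$, since the coefficients of $\psi^*$ involve only (bounded, measurable) first derivatives of $\psi$ and the Jacobian is bounded away from $0$ and $\infty$. On smooth (or Lipschitz) forms the chain rule gives $\ed\psi^*\omega=\psi^*\ed\omega$, and this identity persists on $\Dq(\Theta)$ by the usual approximation argument: for $E\in\Dq(\Theta)$ one has $\psi^*E\in\Ltq(\widetilde\Theta)$ and, testing against $\varphi\in\Ciqpoc(\widetilde\Theta)$, the change of variables formula together with $\ed\psi^*=\psi^*\ed$ on smooth forms yields $\scp{\psi^*E}{\cd\varphi}=-\scp{\psi^*\ed E}{\varphi}$, so $\psi^*E\in\Dq(\widetilde\Theta)$ with $\ed\psi^*E=\psi^*\ed E$, and the norm bound $\norm{\psi^*E}_{\Dq(\widetilde\Theta)}\le c\norm{E}_{\Dq(\Theta)}$ follows from the boundedness of $\psi^*$ in each degree.

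Next I would treat the boundary conditions. For the \emph{strong} spaces $\Dgenc{q}{\Upsilon_0}(\Theta)$: pick $\varphi_n\in\Ciqc_{\Upsilon_0}(\Theta)$ (or their Lipschitz counterparts) with $\varphi_n\to E$ in $\Dq(\Theta)$; then $\psi^*\varphi_n$ are Lipschitz forms whose supports stay a positive distance from $\widetilde\Upsilon_0=\phi(\Upsilon_0)$ — because $\phi,\psi$ are bi-Lipschitz, $\dist(\supp\psi^*\varphi_n,\widetilde\Upsilon_0)>0$ whenever $\dist(\supp\varphi_n,\Upsilon_0)>0$ — and $\psi^*\varphi_n\to\psi^*E$ in $\Dq(\widetilde\Theta)$ by the norm estimate, so $\psi^*E\in\Dgenc{q}{\widetilde\Upsilon_0}(\widetilde\Theta)$. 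For the \emph{weak} spaces $\cDgenc{q}{\Upsilon_0}(\Theta)$ one argues by duality: given $\varphi\in\Ciqpoc_{\widetilde\Upsilon_0}(\widetilde\Theta)$ (equivalently, by the remark after \eqref{defschwach}, $\varphi\in\Deqpoc_{\widetilde\Upsilon_0}(\widetilde\Theta)$), the pull-back $\phi^*\varphi$ lies in $\Deqpoc_{\Upsilon_0}(\Theta)$ by the co-derivative half of the lemma applied to $\phi$ (this is the Hodge dual of the statement just proved, see below); then the defining identity of $\cDgenc{q}{\Upsilon_0}(\Theta)$ for $E$ tested against $\phi^*\varphi$, transported by the change of variables, gives exactly the defining identity of $\cDgenc{q}{\widetilde\Upsilon_0}(\widetilde\Theta)$ for $\psi^*E$.

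For the co-derivative statements I would use Hodge duality to avoid repeating the argument. Writing $\cd=\pm\star\ed\star$, the natural object is $\mu:=(-1)^{qN-1}\star\psi^*\star\eps\phi^*$: one checks $\mu$ is bounded, symmetric and uniformly positive definite on $\Ltq(\widetilde\Theta)$ (symmetry uses that $\star$ is an isometric involution up to sign, that $(\psi^*)^*=\pm\star\phi^*\star$ on the appropriate ranks together with the change-of-variables formula, and that $\eps$ is self-adjoint), hence admissible. The identity $\cd\mu\psi^*H=\pm\star\ed\psi^*\star\eps H=\pm\star\psi^*\star\cd\eps H$ then follows by substituting the definition of $\mu$, using $\star\star=\pm\id$ and the already-established $\ed\psi^*=\psi^*\ed$ applied to $\star\eps H$; the membership $\psi^*H\in\mu^{-1}\cDegenc{q}{\widetilde\Upsilon_0}(\widetilde\Theta)$ (resp. the strong version) and the estimate $\norm{\psi^*H}_{\mu^{-1}\Deq(\widetilde\Theta)}\le c\norm{H}_{\eps^{-1}\Deq(\Theta)}$ are then just the $\ed$-statements read through $\star$. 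The main obstacle I anticipate is purely bookkeeping: tracking the signs and the precise transpose relations $(\psi^*)^*=\pm\star\phi^*\star$ between pull-back and its $\Lt$-adjoint across the various ranks $q$, $q\pm1$, and verifying carefully that $\mu$ is genuinely symmetric and positive definite rather than merely bounded — everything else is a routine density/change-of-variables argument. I would relegate the detailed sign chase and the symmetry verification of $\mu$ to the appendix, as the paper indicates.
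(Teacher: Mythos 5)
Your proposal follows essentially the same route as the paper's appendix proof: the chain rule for Lipschitz pull-backs on Lipschitz forms (via Rademacher), extension to all of $\Dq(\Theta)$ by change of variables, density and mollification of the pulled-back test forms, strong boundary conditions by tracking the supports of approximating test forms under the bi-Lipschitz maps, weak boundary conditions by duality against pulled-back test forms, and the co-derivative statements via Hodge-$\star$-duality together with an explicit verification that $\mu$ is admissible. One slip should be corrected in the weak-boundary-condition step: by definition \eqref{defschwach}, membership in $\cDgenc{q}{\widetilde\Upsilon_0}(\widetilde\Theta)$ is tested against forms vanishing near the \emph{complementary} boundary part $\widetilde\Upsilon_1$, where $\Upsilon_1=\Upsilon\setminus\ol\Upsilon_0$, not near $\widetilde\Upsilon_0$ as you wrote; accordingly the pulled-back test forms must be shown to lie in $\Degenc{q+1}{\Upsilon_1}(\Theta)$ (this is exactly what the paper's Appendix A.3 does), since testing only against forms vanishing near $\widetilde\Upsilon_0$ would verify the boundary condition on the wrong part of the boundary. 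With that index corrected, your argument coincides with the paper's.
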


Let $(\om,\Gamma_{\tau})$ be a bounded weak Lipschitz pair 
as introduced in Definitions \ref{defilipmani} and \ref{defilipsubmani}.
We adjust Lemma \ref{lemtrafo} to our situation:
Let $U_1,\dots,U_K$ be an open covering of $\Gamma$
according to Definitions \ref{defilipmani} and \ref{defilipsubmani}
and set $U_{0}:=\om$. Therefore $U_0,\dots,U_K$ is an open covering of $\ol\om$.
Moreover let $\chi_k\in\Cic(U_k)$, $k\in\{0,\dots,K\}$, 
be a partition of unity subordinate to the open covering $U_0,\dots,U_K$.
Now suppose $k\in\{1,\dots,K\}$. We define
\begin{align*}
\om_k&:=U_k\cap\om,&
\Gamma_k&:=U_k\cap\Gamma,&
\Gamma_{\tau,k}&:=U_k\cap\Gamma_{\tau},&
\Gamma_{\nu,k}&:=U_k\cap\Gamma_{\nu},\\
\widehat\Gamma_{k}&:=\p\om_{k},&
\Sigma_k&:=\widehat\Gamma_{k}\setminus\Gamma,&
\widehat\Gamma_{\tau,k}&:=\textrm{int}(\Gamma_{\tau,k}\cup\ol\Sigma_k),&
\widehat\Gamma_{\nu,k}&:=\textrm{int}(\Gamma_{\nu,k}\cup\ol\Sigma_k),\\
&&
\sigma&:=\gamma\setminus\ol B_{0},&
\widehat\gamma_{\tau}&:=\textrm{int}(\gamma_{\tau}\cup\ol\sigma),&
\widehat\gamma_{\nu}&:=\textrm{int}(\gamma_{\nu}\cup\ol\sigma).
\end{align*}
Lemma \ref{lemtrafo} will from now on be used with 
$$\Theta:=\om_k,\quad
\widetilde\Theta:=\Xi,\qquad
\phi:=\phi_k:\om_{k}\to\Xi,\quad
\psi:=\psi_k:\Xi\to\om_{k}$$
and with one of the following cases
$$\Upsilon_0:=\Gamma_{\tau,k},\quad
\Upsilon_0:=\widehat\Gamma_{\tau,k},\quad
\Upsilon_0:=\Gamma_{\nu,k},\quad
\Upsilon_0:=\widehat\Gamma_{\nu,k}.$$
Then $\Upsilon=\widehat\Gamma_{k}$ and $\widetilde\Upsilon=\phi_{k}(\widehat\Gamma_{k})=\gamma$ 
as well as (depending on the respective case)
\begin{align*}
\widetilde\Upsilon_0
&=\phi_{k}(\Gamma_{\tau,k})
=\gamma_{\tau},&
\widetilde\Upsilon_0
&=\phi_{k}(\widehat\Gamma_{\tau,k})
=\widehat\gamma_{\tau},&
\gamma_{\tau}
&\in\{\emptyset,B_{0},B_{0,-}\},&
\gamma_{\nu}
&=\gamma\setminus\ol\gamma_{\tau},\\
\widetilde\Upsilon_0
&=\phi_{k}(\Gamma_{\nu,k})
=\gamma_{\nu},&
\widetilde\Upsilon_0
&=\phi_{k}(\widehat\Gamma_{\nu,k})
=\widehat\gamma_{\nu},&
\gamma_{\nu}
&\in\{\emptyset,B_{0},B_{0,+}\},&
\gamma_{\tau}
&=\gamma\setminus\ol\gamma_{\nu}.
\end{align*}

\begin{rem}
\label{Bminus}
Lemmas \ref{satzD0L6}, \ref{satzLtL6}, \ref{rss}, \ref{lemHDXi}, \ref{HSG}
hold for $\gamma_{\nu}=B_{0,-}$ without any (substantial) modification as well.
\end{rem}

It is straightforward to show the following:

\begin{lem}[localization]
\label{kor320}
Let $(\om,\Gamma_{\tau})$ be a bounded weak Lipschitz pair.
Then for $E\in\cDqct(\om)$, respectively, $E\in\Dqct(\om)$ and 
$H\in\cDeqcn(\om)$, respectively, $H\in\Deqcn(\om)$ we have
for $k\in\{1,\dots,K\}$
\begin{align*}
E&\in\cDgenc{q}{\Gamma_{\tau,k}}(\om_k),
&
\chi_k E&\in\cDgenc{q}{\widehat\Gamma_{\tau,k}}(\om_k),
&
H&\in\cDegenc{q}{\Gamma_{\nu,k}}(\om_k),
&
\chi_k H&\in\cDegenc{q}{\widehat\Gamma_{\nu.k}}(\om_k),\\
E&\in\Dgenc{q}{\Gamma_{\tau,k}}(\om_k),
&
\chi_k E&\in\Dgenc{q}{\widehat\Gamma_{\tau,k}}(\om_k),
&
H&\in\Degenc{q}{\Gamma_{\nu,k}}(\om_k),
&
\chi_k H&\in\Degenc{q}{\widehat\Gamma_{\nu.k}}(\om_k).
\end{align*}
\end{lem}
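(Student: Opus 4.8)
The plan is to establish each of the eight memberships in Lemma \ref{kor320} by unwinding the definitions of the weak and strong spaces. The statements about $E$ and those about $H$ are dual to each other under the Hodge $\star$-operator (replacing $\Gamma_\tau$ by $\Gamma_\nu$ and $q$ by $N-q$), so it suffices to treat the two $E$-assertions; the $H$-assertions then follow verbatim with the obvious substitutions. Similarly, within each of those, the ``weak'' and ``strong'' cases are handled by the same algebra, the only difference being whether one argues at the level of $\Ci$-test forms (strong) or by the integration-by-parts characterisation (weak). So the real content is two short computations: (a) restriction of $E\in\cDqct(\om)$ to $\om_k$ lands in $\cDgenc{q}{\Gamma_{\tau,k}}(\om_k)$, and (b) multiplication by the cut-off $\chi_k$ turns this into a form in $\cDgenc{q}{\widehat\Gamma_{\tau,k}}(\om_k)$.

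For (a): the boundary portion $\widehat\Gamma_k=\p\om_k$ splits as $\Gamma_k\cup\Sigma_k$, where $\Gamma_k=U_k\cap\Gamma$ is the ``real'' boundary piece and $\Sigma_k=\widehat\Gamma_k\setminus\Gamma$ is the artificial piece lying in the interior of $\om$. A test form $\varphi\in\Ciqpocn[\Gamma_{\nu,k}](\om_k)$ (or its closure $\Deqpocn[\Gamma_{\nu,k}](\om_k)$) has support away from $\Gamma_{\tau,k}$ but need not vanish near $\Sigma_k$; however, since $\Sigma_k\subset\om$, one may regard such $\varphi$ — after multiplying by a cut-off that is $1$ near $\supp\varphi\cap\om_k$ and supported in $U_k$, which does not enlarge the part of the boundary it touches — as (the restriction of) a legitimate test form on $\om$ belonging to $\Ciqpocn(\om)$, because the complementary part $\Gamma_\nu$ of $\om$ only meets $U_k$ in $\Gamma_{\nu,k}$. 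Applying the defining identity of $\cDqct(\om)$ against this extended test form, and noting all integrals localise to $\om_k$, yields exactly the defining identity of $\cDgenc{q}{\Gamma_{\tau,k}}(\om_k)$. The strong version is the same bookkeeping with $\Ciqct$-forms and mollification.

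For (b): given $E\in\cDgenc{q}{\Gamma_{\tau,k}}(\om_k)$ we must check $\chi_k E\in\cDgenc{q}{\widehat\Gamma_{\tau,k}}(\om_k)$, i.e.\ that $\chi_k E$ satisfies the weak boundary condition on the enlarged part $\widehat\Gamma_{\tau,k}=\textrm{int}(\Gamma_{\tau,k}\cup\ol\Sigma_k)$. The point is the Leibniz rule $\ed(\chi_k E)=\ed\chi_k\wedge E+\chi_k\,\ed E$, which shows $\chi_k E\in\Dq(\om_k)$, together with the observation that $\chi_k\in\Cic(U_k)$ vanishes near $\Sigma_k$ (since $\Sigma_k$ lies on $\p U_k$ relative to $\om$, precisely where $\chi_k$ and all its derivatives die). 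Hence for any test form $\varphi\in\Deqpocn[\widehat\Gamma_{\nu,k}](\om_k)$ — which is allowed to be nonzero only away from $\widehat\Gamma_{\tau,k}$, equivalently away from $\Gamma_{\tau,k}$ and from $\Sigma_k$ — the product $\chi_k\varphi$ is a legitimate test form for the space $\cDgenc{q}{\Gamma_{\tau,k}}(\om_k)$ (it vanishes near $\Gamma_{\tau,k}$), and integrating the identity for $E$ against $\chi_k\varphi$ and redistributing the $\ed\chi_k$-term by the weak Stokes identity produces the identity required for $\chi_k E$. Again the strong statement is proved the same way on smooth/Lipschitz test forms.

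The only genuinely delicate point — and the step I would expect to need the most care — is the geometric claim that $\chi_k$ (and its differential) actually vanishes in a neighbourhood of the artificial face $\Sigma_k$, so that multiplication by $\chi_k$ neither creates spurious boundary contributions on $\Sigma_k$ nor destroys the boundary condition already present on $\Gamma_{\tau,k}$; this is where the choice of the covering $U_1,\dots,U_K$ and the subordinate partition of unity, together with the local flattening charts $\phi_k$ from Definitions \ref{defilipmani} and \ref{defilipsubmani}, must be used to guarantee $\Sigma_k=\widehat\Gamma_k\setminus\Gamma\subset U_k\cap\om$ sits strictly inside the region where $\chi_k\equiv 0$ or at least $\dist(\supp\chi_k,\Sigma_k)>0$ after a harmless shrinking of supports. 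Once this is pinned down, everything else is routine, and the lemma follows.
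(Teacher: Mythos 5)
Your overall strategy (restriction to $\om_k$, multiplication by the cut-off, the Leibniz rule, and the observation that $\chi_k$ vanishes in a neighbourhood of the artificial boundary $\Sigma_k$ because $\supp\chi_k$ is compact in $U_k$ while $\Sigma_k\subset\p U_k$) is the right one --- and that last point is in fact immediate, not delicate. However, there is a genuine error running through both of your steps: you have reversed which part of the boundary the test forms must avoid. By definition \eqref{defschwach}, membership in $\cDqct(\om)$ is tested against $\varphi\in\Ciqpocn(\om)$, i.e.\ against forms supported away from the \emph{complementary} part $\Gamma_\nu$; such $\varphi$ may be nonzero near $\Gamma_\tau$, which is precisely how the integration by parts detects the vanishing tangential trace there. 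Consequently the test forms for $\cDgenc{q}{\Gamma_{\tau,k}}(\om_k)$ vanish near $\Gamma_{\nu,k}\cup\Sigma_k$ --- so in your step (a) they \emph{do} vanish near $\Sigma_k$, the extension by zero to $\om$ is immediate, and your auxiliary cut-off ``equal to $1$ near $\supp\varphi\cap\om_k$ and supported in $U_k$'' (which cannot exist if $\supp\varphi$ reaches $\p U_k$) is neither needed nor constructible --- while the test forms for $\cDgenc{q}{\widehat\Gamma_{\tau,k}}(\om_k)$ vanish near $\Gamma_{\nu,k}$ only and may be nonzero near $\Sigma_k$ and $\Gamma_{\tau,k}$.

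In step (b) this matters concretely: you take $\varphi$ ``nonzero only away from $\Gamma_{\tau,k}$ and $\Sigma_k$'' and justify the admissibility of $\chi_k\varphi$ for $\cDgenc{q}{\Gamma_{\tau,k}}(\om_k)$ ``because it vanishes near $\Gamma_{\tau,k}$''. With that test class $\chi_k\varphi$ may be nonzero near $\Gamma_{\nu,k}$, hence it is \emph{not} an admissible test form for the hypothesis on $E$, and the defining identity of $\cDqct(\om)$ cannot be applied to it; moreover, verifying the Stokes identity against forms that live near $\Gamma_{\nu,k}$ would impose a (generally false) condition on $\Gamma_{\nu,k}$ rather than the asserted one. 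The correct bookkeeping is: take $\varphi$ vanishing near $\Gamma_{\nu,k}$; then $\chi_k\varphi$ vanishes near $\Gamma_{\nu,k}$ (from $\varphi$) and near $\Sigma_k$ and $\Gamma\setminus U_k$ (from $\supp\chi_k\subset\subset U_k$), hence its extension by zero is an admissible test form in the closure of $\Ciqpocn(\om)$; apply the defining identity of $\cDqct(\om)$ to $E$ against $\chi_k\varphi$ and move the $\ed\chi_k$-term across using the pointwise adjointness of wedging with $\ed\chi_k$ and the corresponding contraction. With the roles of $\Gamma_{\tau}$ and $\Gamma_{\nu}$ interchanged throughout your test classes, your argument becomes correct; as written, it does not establish the claimed memberships.
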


\begin{theo}[weak and strong boundary conditions coincide]
\label{theoweakeqstrong}
Let $(\om,\Gamma_{\tau})$ be a bounded weak Lipschitz pair.
Then $\cDqct(\om)=\Dqct(\om)$ and $\cDeqcn(\om)=\Deqcn(\om)$.
\end{theo}

\begin{proof}
Suppose $E\in\cDqct(\om)$. Then $\chi_{0}E\in\Dqc(\om)\subset\Dqct(\om)$ by mollification.
Let $k\in\{1,\dots,K\}$. Then $\chi_{k}E\in\cDgenc{q}{\widehat\Gamma_{\tau,k}}(\om_k)$ by Lemma \ref{kor320}. 
Lemma \ref{lemtrafo}, Lemma \ref{rss} (with $\gamma_{\nu}:=\gamma_{\tau}$) 
and Remark \ref{Bminus} yield
$$\psi_{k}^*(\chi_{k}E)\in\cDgenc{q}{\widehat\gamma_{\tau}}(\Xi)=\Dgenc{q}{\widehat\gamma_{\tau}}(\Xi),\qquad
\widehat\gamma_{\tau}=\phi_{k}(\widehat\Gamma_{\tau,k}),\qquad
\gamma_{\tau}\in\{\emptyset,B_{0},B_{0,-}\}.$$
Then $\chi_{k}E=\chi_{k}\phi_{k}^*\psi_{k}^*E\in\Dgenc{q}{\widehat\Gamma_{\tau,k}}(\om_k)\subset\Dqct(\om)$ 
by Lemma \ref{lemtrafo}.
Hence we see $E=\sum_{k}\chi_k E\in\Dqct(\om)$.
$\cDeqcn(\om)=\Deqcn(\om)$ follows analogously or by Hodge-$\star$-duality.
\end{proof}

Now the compact embedding for bounded weak Lipschitz pairs $(\om,\Gamma_{\tau})$ can be proved.

\begin{theo}[Weck's selection theorem]
\label{satzMKE}
Let $(\om,\Gamma_{\tau})$ be a bounded weak Lipschitz pair
and let $\eps$ be an admissible transformation on $\Ltqom$.
Then the embedding
$$\Dqct(\om)\cap\eps^{-1}\Deqcn(\om)\hookrightarrow\Ltqepsom$$
is compact.
\end{theo}

\begin{proof}
Suppose $(E_n)$ is a bounded sequence in $\Dqct(\om)\cap\eps^{-1}\Deqcn(\om)$. Then by mollification
$$E_{0,n}:=\chi_0E_n\in\Dqcom\cap\eps^{-1}\Deqcom$$
and $E_{0,n}$ even has compact support in $\om$.
By classical results, see \cite{weckmaxcomp,weckmax,picardcomimb}, 
$(E_{0,n})$ contains a subsequence,
again denoted by $(E_{0,n})$,
converging in $\Ltqepsom$.
Let $k\in\{1,\dots,K\}$. By Lemma \ref{kor320}
$$E_{k,n}:=\chi_kE_n\in\Dgenc{q}{\widehat\Gamma_{\tau,k}}(\om_k),\qquad
\eps E_{k,n}\in\Degenc{q}{\widehat\Gamma_{\nu,k}}(\om_k),$$
and the sequence $(E_{k,n})$ is bounded in 
$\Dgenc{q}{\widehat\Gamma_{\tau,k}}(\om_k)\cap\eps^{-1}\Degenc{q}{\widehat\Gamma_{\nu,k}}(\om_k)$ 
by the product rule. 
By Lemma \ref{lemtrafo} we have 
$\psi_{k}^* E_{k,n}\in\Dgenc{q}{\widehat\gamma_{\tau}}(\Xi)$ and
\begin{align*}
\norm{\psi_{k}^* E_{k,n}}_{\Dq(\Xi)} 
\leq c\norm{E_{k,n}}_{\Dq(\om_{k})},
\end{align*}
showing that $(\psi_{k}^* E_{k,n})$ is bounded in $\Dgenc{q}{\widehat\gamma_{\tau}}(\Xi)$. 
Analogously, $(\psi_{k}^*E_{k,n})\subset\mu_{k}^{-1}\Degenc{q}{\widehat\gamma_{\nu}}(\Xi)$ 
is bounded in $\mu_{k}^{-1}\Degenc{q}{\widehat\gamma_{\nu}}(\Xi)$ with the admissible transformation
$\mu_{k}:=(-1)^{qN-1}\star\psi_{k}^*\star\eps\phi_{k}^*$. 
Thus $(\psi_{k}^* E_{k,n})$ is bounded in 
$$\Dgenc{q}{\widehat\gamma_{\tau}}(\Xi)\cap\mu^{-1}_k\Degenc{q}{\widehat\gamma_{\nu}}(\Xi)
\subset\Dgenc{q}{\widehat\gamma_{\tau}}(\Xi)\cap\mu^{-1}_k\Degenc{q}{\gamma_{\nu}}(\Xi),\quad
\gamma_{\nu}\in\{\emptyset,B_{0},B_{0,+}\},\quad
\widehat\gamma_{\tau}=\gamma\setminus\ol\gamma_{\nu}.$$
Thus, by Lemma \ref{HSG} and without loss of generality, 
$(\psi_{k}^*E_{k,n})$ is a Cauchy sequence in $\Ltq(\Xi)$. Now
$$E_{k,n}=\phi_{k}^*\psi_{k}^*E_{k,n}\in\Ltq(\om_{k})$$
and Lemma \ref{lemtrafo} yields
\begin{align*}
\norm{E_{k,n}-E_{k,m}}_{\Ltq(\om_k)} 
\leq c\norm{\psi_{k}^*E_{k,n}-\psi_{k}^*E_{k,m}}_{\Ltq(\Xi)}.
\end{align*}
Hence $(E_{k,n})$ is a Cauchy sequence in $\Ltq(\om_k)$ and so in $\Ltqepsom$
for their extensions by zero to $\om$.
Finally, extracting convergent subsequences for $k=1,\dots,K$, we see that
$$(E_n)=\big(\sum_{k=0}^{K}\chi_kE_n\big)=\big(\sum_{k=0}^{K}E_{k,n}\big)$$
is a Cauchy sequence in $\Ltqepsom$.
\end{proof}

\section{Applications}
\label{sectApplications}

From now on, let $\om\subset\rN$ be a bounded domain and
let $(\om, \Gamma_{\tau})$ be a weak Lipschitz pair as well as $\eps:\Ltqom\rightarrow\Ltqom$ be admissible. 
Then by Theorem \ref{satzMKE} the embedding
\begin{align}
\label{compembappsec}
\Dqct(\om)\cap\eps^{-1}\Deqcn(\om)\hookrightarrow\Ltqom
\end{align}
is compact. The results of this section immediately follow in the framework 
of a general functional analytic toolbox, 
see \cite{paulyapostfirstordergen,paulydivcurl,paulyzulehnerbiharmonic}.
For details, see also the proofs in \cite{bauerpaulyschomburgmaxcompweaklip}
for the classical case of vector analysis.

\subsection{The Maxwell Estimate}
\label{secappmaxest}

A first consequence of \eqref{compembappsec}
is that the space of so-called ``harmonic'' Dirichlet-Neumann forms
$$\harmdiqeps:=\Dqczt(\om)\cap\eps^{-1}\Deqczn(\om)$$
is finite-dimensional, as the unit ball in $\harmdiqeps$ is compact by \eqref{compembappsec}.
By a standard indirect argument, 
\eqref{compembappsec} immediately implies the so-called Maxwell estimate:

\begin{theo}[Maxwell estimate]
\label{MA}
There is $c_{\mathsf{m}}>0$, 
such that for all $E\in\Dqct(\om)\cap\eps^{-1}\Deqcn(\om)\cap\harmdiqeps^{\perp_\eps}$
\begin{align*}
\norm{E}_{\Ltqepsom}
\leq c_{\mathsf{m}}\,\big(\normLtqpoom{\ed E}^2
+\normLtqmoom{\cd\eps E}^2\big)^{1/2}.
\end{align*}
\end{theo}

Here we denote by $\perp_{\eps}$ orthogonality with respect to the $\Ltqeps(\om)$-inner product.

\subsection{Helmholtz Decompositions}
\label{sechelmdeco}

Applying the projection theorem to the densely defined and closed (unbounded) linear operators
$$\ed_{\tau}^{q-1}:\Dqmoct(\om)\subset\Ltqmo(\om)\rightarrow\Ltqepsom\,;\quad
E\mapsto\ed E$$
with (Hilbert space) adjoint (see Theorem \ref{theoweakeqstrong})
$$-\cd^q_{\nu}\eps:=(\ed^{q-1}_{\tau})^*:\eps^{-1}\Deqcn(\om)\subset\Ltqepsom\rightarrow\Ltqmoom\,;\quad
H\mapsto-\cd\eps H$$
and 
$$-\eps^{-1}\cd^{q+1}_{\nu}:\eps^{-1}\Deqpocn(\om)\subset\Ltqpoom\rightarrow\Ltqepsom\,;\quad
H\mapsto-\eps^{-1}\cd H$$ 
with adjoint (see Theorem \ref{theoweakeqstrong})
$$\ed^{q}_{\tau}:=(-\eps^{-1}\cd^{q+1}_{\nu})^*:\Dqct(\om)\subset\Ltqepsom\rightarrow\Ltqpoom\,;\quad
E\mapsto\ed E$$
we obtain the Helmholtz decompositions
\begin{align}
\label{HDom1}
\Ltqeps(\om)
&=\overline{\ed\Dqmoct(\om)}\oplus_{\eps}\eps^{-1}\Deqczn(\om),\\
\label{HDom2}
\Ltqeps(\om)
&=\Dqczt(\om)\oplus_\eps\overline{\eps^{-1}\cd\Deqpocn(\om)}.
\end{align}
Therefore, $\Dqczt(\om)=\overline{\ed\Dqmoct(\om)} \oplus_{\eps}\harmdiqeps$
and, altogether, we get the refined Helmholtz decomposition
\begin{align}
\label{HDom3}
\Ltqeps(\om)=\overline{\ed\Dqmoct(\om)}\oplus_\eps\harmdiqeps\oplus_\eps\overline{\eps^{-1}\cd\Deqpocn(\om)}.
\end{align}

\begin{theo}[Helmholtz decompositions]
\label{HZ}
The orthonormal decompositions
\begin{align*}
\Ltqepsom
&=\ed\Dqmoct(\om)\oplus_\eps \eps^{-1}\Deqczn(\om)\\
&=\Dqczt(\om)\oplus_\eps\eps^{-1}\cd\Deqpocn(\om)\\
&=\ed\Dqmoct(\om)\oplus_\eps\harmdiqeps\oplus_\eps\eps^{-1}\cd\Deqpocn(\om)
\end{align*}
hold. Furthermore
\begin{align*}
\ed\Dqct(\om)
&=\ed\big(\Dqct(\om)\cap\eps^{-1}\cd\Deqpocn(\om)\big)
=\ed\big(\Dqct(\om)\cap\eps^{-1}\Deqczn(\om)\cap\harmdiqeps^{\perp_\eps}\big),\\
\cd\Deqcn(\om)
&=\cd\big(\Deqcn(\om)\cap\eps\ed\Dqmoct(\om)\big)
=\cd\Big(\Deqcn(\om)\cap\eps\big(\Dqczt(\om)\cap\harmdiqeps^{\perp_\eps}\big)\Big)
\end{align*}
and
\begin{align*}
\ed\Dqmoct(\om)
&=\Dqczt(\om)\cap\harmdiqeps^{\perp_\eps},&
\cd\Deqpocn(\om)
&=\Deqczn(\om)\cap\harmdiqeps^{\perp},\\
\Dqczt(\om)
&=\ed\Dqmoct(\om)\oplus_{\eps}\harmdiqeps,&
\Deqczn(\om)
&=\cd\Deqpocn(\om)\oplus_{\eps^{-1}}\eps\harmdiqeps.
\end{align*}
The ranges $\ed\Dqmoct(\om)$ and $\cd\Deqpocn(\om)$ are closed subspaces of $\Ltqepsom$.
Moreover,  the $\ed$- resp. $\cd$-potentials are uniquely determined
in $\Dqct(\om)\cap\eps^{-1}\Deqczn(\om)\cap\harmdiqeps^{\perp_\eps}$
and $\Deqcn(\om)\cap\eps\big(\Dqczt(\om)\cap\harmdiqeps^{\perp_\eps}\big)$, respectively,
and depend continuously on their respective images.
\end{theo}

\begin{proof}
For $\eps=\id$ \eqref{HDom1} and \eqref{HDom2} yield 
\begin{align*}
\Deqcn(\om)
&=\big(\overline{\ed\Dqmoct(\om)}\cap\Deqcn(\om)\big)
\oplus\Deqczn(\om),\\
\Dqct(\om)
&=\Dqczt(\om)
\oplus\big(\Dqct(\om)\cap\overline{\cd\Deqpocn(\om)}\big)
\end{align*}
and thus with \eqref{HDom1}, \eqref{HDom2}, and \eqref{HDom3}
\begin{align*}
\cd\Deqcn(\om)
&=\cd\big(\Deqcn(\om)\cap\ol{\ed\Dqmoct(\om)}\big)
=\cd\big(\Dqczt(\om)\cap\Deqcn(\om)\cap\harmdiq^{\perp}\big),\\
\ed\Dqct(\om)
&=\ed\big(\Dqct(\om)\cap\ol{\cd\Deqpocn(\om)}\big)
=\ed\big(\Dqct(\om)\cap\Deqczn(\om)\cap\harmdiq^{\perp}\big).
\end{align*}
Now Theorem \ref{MA} implies the closedness of the ranges 
and the continuity of the potentials. The other assertions follow immediately.
\end{proof}

\begin{cor}[refined Helmholtz decompositions]
\label{HZcor}
It holds
\begin{align*}
\Dqct(\om)
&=\ed\Dqmoct(\om)\oplus_\eps\big(\Dqct(\om)\cap\eps^{-1}\Deqczn(\om)\big)\\
&=\Dqczt(\om)\oplus_\eps\big(\Dqct(\om)\cap\eps^{-1}\cd\Deqpocn(\om)\big)\\
&=\ed\Dqmoct(\om)\oplus_\eps\harmdiqeps\oplus_\eps\big(\Dqct(\om)\cap\eps^{-1}\cd\Deqpocn(\om)\big),\\
\eps^{-1}\Deqcn(\om)
&=\big(\ed\Dqmoct(\om)\cap\eps^{-1}\Deqcn(\om)\big)\oplus_\eps\eps^{-1}\Deqczn(\om)\\
&=\big(\Dqczt(\om)\cap\eps^{-1}\Deqcn(\om)\big)\oplus_\eps\eps^{-1}\cd\Deqpocn(\om)\\
&=\big(\ed\Dqmoct(\om)\cap\eps^{-1}\Deqcn(\om)\big)\oplus_\eps\harmdiqeps\oplus_\eps\eps^{-1}\cd\Deqpocn(\om).
\end{align*}
\end{cor}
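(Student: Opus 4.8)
The plan is to obtain all six refined decompositions as purely formal consequences of the three orthogonal Helmholtz decompositions of $\Ltqepsom$ already established in Theorem \ref{HZ}, in complete analogy with the way Corollary \ref{picardcorone} follows from Lemma \ref{picardlem} in the no-boundary / full-boundary setting. The only abstract ingredient is the elementary splitting lemma: if a Hilbert space $H$ decomposes orthogonally as $H=A\oplus B$ and $A\subseteq W\subseteq H$ for some subspace $W$, then $W=A\oplus(W\cap B)$ orthogonally. This needs no closedness assumption: for $w\in W$ the decomposition $w=a+b$ with $a\in A\subseteq W$ and $b\in B$ forces $b=w-a\in W\cap B$, and $A\perp B\supseteq W\cap B$ yields the orthogonality, so $W=A+(W\cap B)$ is an orthogonal direct sum.

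First I would treat the three decompositions of $\Dqct(\om)$. From Theorem \ref{HZ} (and Theorem \ref{MA}, which guarantees closedness of the ranges) I record the inclusions $\ed\Dqmoct(\om)\subseteq\Dqczt(\om)\subseteq\Dqct(\om)$ and $\harmdiqeps\subseteq\Dqczt(\om)$, together with the identity $\ed\Dqmoct(\om)\oplus_\eps\harmdiqeps=\Dqczt(\om)$; here $\ed\Dqmoct(\om)\subseteq\Dqczt(\om)$ is immediate from $\ed\ed=0$ and the invariance of the tangential boundary condition under $\ed$. Applying the splitting lemma with $H=\Ltqepsom$ and $W=\Dqct(\om)$ to
$$\Ltqepsom=\ed\Dqmoct(\om)\oplus_\eps\eps^{-1}\Deqczn(\om),\qquad
\Ltqepsom=\Dqczt(\om)\oplus_\eps\eps^{-1}\cd\Deqpocn(\om),$$
with $A=\ed\Dqmoct(\om)$ and $A=\Dqczt(\om)$ respectively, and once more to the three-term decomposition $\Ltqepsom=\ed\Dqmoct(\om)\oplus_\eps\harmdiqeps\oplus_\eps\eps^{-1}\cd\Deqpocn(\om)$ with $A=\ed\Dqmoct(\om)\oplus_\eps\harmdiqeps$, produces exactly the first block of three identities.

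For the three decompositions of $\eps^{-1}\Deqcn(\om)$ I would argue dually. Theorem \ref{HZ} provides the inclusions $\eps^{-1}\cd\Deqpocn(\om),\,\harmdiqeps\subseteq\eps^{-1}\Deqczn(\om)\subseteq\eps^{-1}\Deqcn(\om)$, where $\cd\Deqpocn(\om)\subseteq\Deqczn(\om)$ follows from $\cd\cd=0$ and the invariance of the normal boundary condition under $\cd$, and the identity $\eps^{-1}\cd\Deqpocn(\om)\oplus_\eps\harmdiqeps=\eps^{-1}\Deqczn(\om)$. Applying the splitting lemma with $W=\eps^{-1}\Deqcn(\om)$ to the same three decompositions of $\Ltqepsom$, taking $A$ equal to $\eps^{-1}\Deqczn(\om)$, then $\eps^{-1}\cd\Deqpocn(\om)$, then $\harmdiqeps\oplus_\eps\eps^{-1}\cd\Deqpocn(\om)$, yields the remaining three identities. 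I expect no genuine obstacle: the statement is bookkeeping on top of Theorem \ref{HZ}, and the only point demanding (entirely routine) care is verifying the chain of inclusions among the closed ranges, the harmonic Dirichlet--Neumann forms $\harmdiqeps$, and the spaces $\Dqczt(\om)$, $\Deqczn(\om)$ carrying the partial boundary conditions.
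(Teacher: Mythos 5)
Your proposal is correct and is essentially the paper's own argument: the corollary is stated as an immediate consequence of Theorem \ref{HZ}, and the intended derivation is exactly the elementary splitting device you describe (if $H=A\oplus_\eps B$ with $A\subseteq W$, then $W=A\oplus_\eps(W\cap B)$), which the paper already uses explicitly inside the proof of Theorem \ref{HZ} and in Corollary \ref{picardcorone}. The inclusions you flag as needing verification ($\ed\Dqmoct(\om)\subseteq\Dqczt(\om)$, $\eps^{-1}\cd\Deqpocn(\om),\harmdiqeps\subseteq\eps^{-1}\Deqczn(\om)$, and the two-term identities for $\Dqczt(\om)$ and $\Deqczn(\om)$) are all recorded in Theorem \ref{HZ}, so nothing further is required.
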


\subsection{Static Solution Theory}

As a further application we turn to the boundary value problem 
of generalized electro- and magnetostatics with mixed boundary values: 
Let $F\in\Ltqpoom$, $G\in\Ltqmoom$, 
$E_\tau,\,E_\nu\in\Ltqepsom$, and let $\eps$ be admissible. 
The problem is to find $E\in\Dq(\om)\cap\eps^{-1}\Deq(\om)$ such that
\begin{align}
\begin{split}
\label{EMS}
\ed E &= F,\\
\cd \eps E &= G,\\
E - E_{\tau} &\in\Dqct(\om),\\
\eps(E - E_{\nu}) &\in\Deqcn(\om).
\end{split}
\end{align}
For uniqueness, we require the additional conditions
\begin{align}
\label{ONBproj}
\scpLtqepsom{\eps E}{D_\ell} = \alpha_\ell \in\rz,\quad \ell=1,\dots,d,
\end{align}
where $d$ is the dimension and $\{D_\ell\}$ an $\eps$-orthonormal basis of $\harmdiqeps$.
The boundary values on $\Gamma_{\tau}$ and $\Gamma_{\nu},$ respectively, 
are realised by the given volume forms $E_{\tau}$ and $E_{\nu}$, respectively.

\begin{theo}[static solution theory]
\label{satzloesung}
\eqref{EMS} admits a solution, if and only if 
$$E_{\tau}\in\Dq(\om),\quad 
E_{\nu}\in\eps^{-1}\Deq(\om),$$
and
\begin{align}
\label{perploes}
F-\ed E_{\tau}\perp\Deqpoczn(\om),
\quad G-\cd \eps E_{\nu}\perp\Dqmoczt(\om).
\end{align}
The solution $E\in\Dq(\om)\cap\eps^{-1}\Deq(\om)$ can be chosen in a way
such that condition \eqref{ONBproj} with $\alpha\in\rz^d$ is fulfilled, 
which then uniquely determines the solution. 
Furthermore, the solution depends linearly and continuously on the data.
\end{theo}

Note that \eqref{perploes} is equivalent to 
$$F-\ed E_{\tau}\in\ed\Dqct(\om),\qquad
G-\cd\eps E_{\nu}\in\cd\Deqcn(\om).$$
For homogeneous boundary data, i.e., $E_{\tau}=E_{\nu}=0$, 
the latter theorem immediately follows from a functional analytic toolbox, 
see \cite{paulyapostfirstordergen,paulydivcurl,paulyzulehnerbiharmonic},
which even states a sharper result: The linear static Maxwell-operator
$$\Abb{M}{\Dqct(\om)\cap\eps^{-1}\Deqcn(\om)}{\ed\Dqct(\om)\times\cd\Deqcn(\om)\times\rz^d}
{E}{\big(\ed E,\cd\eps E,(\scpLtqepsom{\eps E}{D_\ell})_{\ell=1}^{d}\big)}$$
is a topological isomorphism. Its inverse $M^{-1}$ maps not only continuously
onto $\Dqct(\om)\cap\eps^{-1}\Deqcn(\om)$, but also compactly into $\Ltqepsom$ by \eqref{compembappsec}.
For homogeneous kernel data, i.e., for 
$$\Abb{M_{0}}{\Dqct(\om)\cap\eps^{-1}\Deqcn(\om)\cap\harmdiqeps^{\perp_\eps}}{\ed\Dqct(\om)\times\cd\Deqcn(\om)}
{E}{(\ed E,\cd\eps E)},$$
we have $\norms{M_{0}^{-1}}\leq(c_{\mathsf{m}}^2+1)^{1/2}$.
For details and a proof of Theorem \ref{satzloesung}
in the classical setting of vector analysis see \cite{bauerpaulyschomburgmaxcompweaklip}.

\section{Remarks on the Transformations}

Let us mention some observations on the transformations,
in particular that some results are independent 
of the admissible transformation $\eps$.
For this, let $\om\subset\rN$ be an open set,
let $\eps$ and $\mu$ be admissible transformations on $\Ltqom$, 
and let us recall the arguments leading to \eqref{HDom1}
in Section \ref{sechelmdeco} for the
densely defined and closed (unbounded) linear operator
$$\ed_{\tau}^{q-1}:\Dqmoct(\om)\subset\Ltqmo(\om)\rightarrow\Lgen{2,q}{\mu}(\om)\,;\quad
E\mapsto\ed E.$$
If Theorem \ref{theoweakeqstrong} (weak and strong boundary conditions coincide)
is not available, its adjoint is given by
$$-\cd^q_{\nu}\mu:=(\ed^{q-1}_{\tau})^*:\mu^{-1}\cDeqcn(\om)\subset\Lgen{2,q}{\mu}(\om)\rightarrow\Ltqmoom\,;\quad
H\mapsto-\cd\mu H$$
yielding instead of \eqref{HDom1} the Helmholtz (and refined Helmholtz) decompositions
\begin{align}
\label{HDom1weakL}
\Lgen{2,q}{\mu}(\om)
&=\overline{\ed\Dqmoct(\om)}\oplus_{\mu}\mu^{-1}\cDeqczn(\om),\\
\label{HDom1weakR}
\Dqct(\om)
&=\ol{\ed\Dqmoct(\om)}\oplus_{\mu}\big(\Dqct(\om)\cap\mu^{-1}\cDeqczn(\om)\big),\\
\label{HDom1weakD}
\mu^{-1}\cDeqcn(\om)
&=\big(\ol{\ed\Dqmoct(\om)}\cap\mu^{-1}\cDeqcn(\om)\big)\oplus_{\mu}\mu^{-1}\cDeqczn(\om).
\end{align}

\begin{theo}[independence of the transformation]
\label{epstheo}
Let $\om\subset\rN$ be an open set and let $\eps$ be an admissible transformation on $\Ltqom$.
\begin{itemize}
\item[\bf(i)]
Weck's selection theorem is independent of the transformation $\eps$, i.e.,
the compactness of the embedding in Theorem \ref{satzMKE} does not depend on $\eps$.
\item[\bf(ii)]
The dimension of $\harmdiqeps$ does not depend on $\eps$, 
in particular $\dim\harmdiqeps=\harmdiq$.
\item[\bf(iii)]
If Weck's selection theorem (Theorem \ref{satzMKE}) holds, 
then the dimension of $\harmdiqeps$ is finite.
\end{itemize}
\end{theo}

\begin{proof}
(iii) has already been shown in the beginning of Section \ref{secappmaxest}.

To show (i), let us assume that the embedding
\begin{align}
\label{comembproofepsind}
\Dqct(\om)\cap\mu^{-1}\cDeqcn(\om)\hookrightarrow\Lgen{2,q}{\mu}(\om)
\end{align}
is compact.
Moreover, let $(E_{n})$ be a bounded sequence in $\Dqct(\om)\cap\eps^{-1}\cDeqcn(\om)$.
By \eqref{HDom1weakR} we have
\begin{align}
\label{helmdecoproofepsmu}
\Dqct(\om)\ni E_{n}=E_{\ed,n}+E_{0,n}
\in\ol{\ed\Dqmoct(\om)}\oplus_{\mu}\big(\Dqct(\om)\cap\mu^{-1}\cDeqczn(\om)\big)
\end{align}
with $\ed E_{n}=\ed E_{0,n}$ and 
$\norm{E_{\ed,n}}_{\Lgen{2,q}{\mu}(\om)}\,,\,\norm{E_{0,n}}_{\Lgen{2,q}{\mu}(\om)}
\leq\norm{E_{n}}_{\Lgen{2,q}{\mu}(\om)}$. 
Hence $(E_{0,n})$ is a bounded sequence
in $\Dqct(\om)\cap\mu^{-1}\cDeqczn(\om)$ and therefore contains by \eqref{comembproofepsind}
a $\Lgen{2,q}{\mu}(\om)$-converging subsequence, again denoted by $(E_{0,n})$.
By \eqref{HDom1weakD} we get
$$\mu^{-1}\cDeqcn(\om)\ni\mu^{-1}\eps E_{n}=H_{\ed,n}+H_{0,n}
\in\big(\ol{\ed\Dqmoct(\om)}\cap\mu^{-1}\cDeqcn(\om)\big)\oplus_{\mu}\mu^{-1}\cDeqczn(\om)$$
with $\cd\eps E_{n}=\cd\mu H_{\ed,n}$ and
$\norm{H_{\ed,n}}_{\Lgen{2,q}{\mu}(\om)}\,,\,\norm{H_{0,n}}_{\Lgen{2,q}{\mu}(\om)}
\leq\norm{\mu^{-1}\eps E_{n}}_{\Lgen{2,q}{\mu}(\om)}$. 
Therefore $(H_{\ed,n})$ is a bounded sequence
in $\Dqczt(\om)\cap\mu^{-1}\cDeqcn(\om)$ and hence contains by \eqref{comembproofepsind}
a $\Lgen{2,q}{\mu}(\om)$-converging subsequence, again denoted by $(H_{\ed,n})$.
Then by orthogonality, i.e., $\ol{\ed\Dqmoct(\om)}\,\bot\,\cDeqczn(\om)$,
\begin{align*}
&\qquad\scpLtqepsom{E_{n}-E_{m}}{E_{n}-E_{m}}\\
&=\scpLtqom{\eps(E_{n}-E_{m})}{E_{\ed,n}-E_{\ed,m}}
+\scpLtqom{\eps(E_{n}-E_{m})}{E_{0,n}-E_{0,m}}\\
&=\scpLtqom{\mu(H_{\ed,n}-H_{\ed,m})}{E_{\ed,n}-E_{\ed,m}}
+\scpLtqom{\mu\mu^{-1}\eps(E_{n}-E_{m})}{E_{0,n}-E_{0,m}}\\
&\leq c\big(\norm{H_{\ed,n}-H_{\ed,m}}_{\Lgen{2,q}{\mu}(\om)}
+\norm{E_{0,n}-E_{0,m}}_{\Lgen{2,q}{\mu}(\om)}\big),
\end{align*}
which shows that $(E_{n})$ is a Cauchy sequence in $\Ltqepsom$.

To show (ii), we obtain by \eqref{HDom1weakR}
\begin{align}
\label{helmdecoproofepsmuedzero}
\Dqczt(\om)
=\ol{\ed\Dqmoct(\om)}\oplus_{\mu}\qharmdi{q}{\mu},\qquad
\qharmdi{q}{\mu}
=\Dqczt(\om)\cap\mu^{-1}\cDeqczn(\om),
\end{align}
and denote the orthonormal projector on the second component by $\pi$. Then
$$\Abb{\hat{\pi}}{\harmdiqeps}{\qharmdi{q}{\mu}}{H}{\pi H}$$
is injective, as $\hat{\pi}E=0$ implies $E\in\ol{\ed\Dqmoct(\om)}\cap\harmdiqeps=\{0\}$,
and hence $\dim\harmdiqeps\leq\dim\qharmdi{q}{\mu}$.
By symmetry we obtain $\dim\harmdiqeps=\dim\qharmdi{q}{\mu}$,
completing the proof.
\end{proof}

\section{General Regular Potentials and Decompositions}

A closer inspection of the proof of Lemma \ref{satzD0L6} shows that 
Lemma \ref{satzD0L6} and Lemma \ref{satzLtL6} hold for more general situations. 

\begin{defi}[extendable domain]
\label{defirotpot}
Let $\om\subset\rN$ and let $(\om,\Gamma_{\nu})$ be a bounded strong Lipschitz pair.
Moreover, let $\om$ and $\Gamma_{\nu}$ be topologically trivial (then so is $\Gamma_{\tau}$).
The pair $(\om,\Gamma_{\nu})$ is called ``extendable'', if
$\om$ can be extended through $\Gamma_{\nu}$ by zero to $\widehat{\om}$,
resulting in a topologically trivial strong Lipschitz domain 
$\widetilde{\om}=\text{\rm int}(\ol\om\cup\ol{\widehat\om})$.
\end{defi}

\begin{lem}[regular potentials and decompositions for extendable domains]
\label{regpotexted}
Let $\om\subset\rN$ and let $(\om,\Gamma_{\nu})$ be a 
bounded, topologically trivial, and extendable strong Lipschitz pair.
\begin{itemize}
\item[\bf(i)]
There exists a continuous linear operator
$$\S_{\ed}^{q}:\cDqczn(\om)\rightarrow\Hoqmo(\rN)\cap\Hoqmocn(\om),$$
such that $\ed\S_{\ed}^{q}=\id|_{\cDqczn(\om)}$, i.e.,
for all $H\in\cDqczn(\om)$
$$\ed\S_{\ed}^{q}H=H\quad\text{in }\om.$$
Especially 
$$\Dgenc{q}{\Gamma_{\nu},0}(\om)
=\cDqczn(\om)
=\ed\S_{\ed}^{q}\cDqmoczn(\om)
=\ed\Hoqmocn(\om)
=\ed\Dqmocn(\om)
=\ed\cDqmocn(\om)$$
and the regular $\Hoqmocn(\om)$-potential depends continuously on the data.
In particular, these spaces are closed subspaces of 
$\Ltq(\om)$ and $\S_{\ed}^{q}$ is a right inverse to $\ed$.
Without loss of generality, 
$\S_{\ed}$ maps to forms with a fixed compact support in $\rN$.
\item[\bf(ii)]
The regular decompositions
\begin{align*}
\cDgenc{q}{\Gamma_{\nu}}(\om)
=\Dqcn(\om)
&=\Hoqcn(\om)+\ed\Hoqmocn(\om)\\
&=\S_{\ed}^{q+1}\ed\Dqcn(\om)
\dotplus\ed\S_{\ed}^{q}(1-\S_{\ed}^{q+1}\ed)\Dqcn(\om)\\
&=\S_{\ed}^{q+1}\ed\Dqcn(\om)
\dotplus\Dgenc{q}{\Gamma_{\nu},0}(\om)
\end{align*}
hold with linear and continuous regular decomposition resp. potential operators,
where $\dotplus$ denotes the direct sum. More precisely,
$\S_{\ed}^{q+1}\ed+\ed\S_{\ed}^{q}(1-\S_{\ed}^{q+1}\ed)
=\id|_{\Dqcn(\om)}$, i.e., for all $E\in\Dqcn(\om)$
$$E=\S_{\ed}^{q+1}\ed E+\ed\S_{\ed}^{q}(1-\S_{\ed}^{q+1}\ed)E
\in\Hoqcn(\om)+\ed\Hoqmocn(\om)$$
with the linear and continuous regular potential operators
\begin{align*}
\S_{\ed}^{q+1}\ed:\Dqcn(\om)
&\to\Hoqcn(\om),\\
\S_{\ed}^{q}(1-\S_{\ed}^{q+1}\ed):\Dqcn(\om)
&\to\Hoqmocn(\om).
\end{align*}
\item[\bf(iii)]
Hodge-$\star$-duality yields the corresponding results for the co-derivative.
In particular, there exists a continuous linear $\cd$-right inverse operator
$$\S_{\cd}^{q}:\cDeqczn(\om)\rightarrow\Hoqpo(\rN)\cap\Hgenc{1,q+1}{\Gamma_{\nu}}(\om),$$
i.e., $\cd\S_{\cd}^{q}=id|_{\cDeqczn(\om)}$.
Moreover, 
$\Degenc{q}{\Gamma_{\nu},0}(\om)
=\cDeqczn(\om)
=\cd\Hgenc{1,q+1}{\Gamma_{\nu}}(\om)$
and the regular $\Hgenc{1,q+1}{\Gamma_{\nu}}(\om)$-potential depends continuously on the data.
Furthermore, the regular decompositions
\begin{align*}
\cDegenc{q}{\Gamma_{\nu}}(\om)
=\Degenc{q}{\Gamma_{\nu}}(\om)
&=\Hoqcn(\om)+\cd\Hgenc{1,q+1}{\Gamma_{\nu}}(\om)\\
&=\S_{\cd}^{q-1}\cd\Degenc{q}{\Gamma_{\nu}}(\om)
\dotplus\cd\S_{\cd}^{q}(1-\S_{\cd}^{q-1}\cd)\Degenc{q}{\Gamma_{\nu}}(\om)
\end{align*}
hold with the linear and continuous regular potential operators
\begin{align*}
\S_{\cd}^{q-1}\cd:\Degenc{q}{\Gamma_{\nu}}(\om)
&\to\Hoqcn(\om),\\
\S_{\cd}^{q}(1-\S_{\cd}^{q-1}\cd):\Degenc{q}{\Gamma_{\nu}}(\om)
&\to\Hgenc{1,q+1}{\Gamma_{\nu}}(\om),
\end{align*}
and 
$\S_{\cd}^{q-1}\cd+\cd\S_{\cd}^{q}(1-\S_{\cd}^{q-1}\cd)
=\id|_{\Degenc{q}{\Gamma_{\nu}}(\om)}$.
\end{itemize}
\end{lem}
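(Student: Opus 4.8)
The plan is to run the proof of Lemma~\ref{satzD0L6} almost verbatim, replacing the half-cube $\Xi$, its one-sided extension $\widehat\Xi$, the glued domain $\widetilde\Xi$ and the boundary patch $\gamma_{\nu}$ by $\om$, $\widehat\om$, $\widetilde\om$ and $\Gamma_{\nu}$. The only features of the half-cube that were used there are that $\widetilde\Xi$ and $\widehat\Xi$ are topologically trivial strong Lipschitz domains and that $\Xi$ is strong Lipschitz. For an extendable pair $(\om,\Gamma_{\nu})$ all of this is available: $\om$ and $\widetilde\om$ are strong Lipschitz and topologically trivial by hypothesis, and the extension region $\widehat\om$ is, by construction, a topologically trivial strong Lipschitz domain as well (topological triviality of $\widehat\om$ being inherited from that of $\om$, $\widetilde\om$ and $\Gamma_{\nu}$ via a Mayer--Vietoris argument). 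In particular $\mathcal H^{q}_{N}(\widetilde\om)=\mathcal H^{q-1}_{N}(\widehat\om)=\{0\}$, which is what lets us apply Lemma~\ref{Dqzpot} twice below without harmonic corrections.

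\textbf{Part (i).} Given $H\in\cDqczn(\om)$, let $\widetilde H$ be its extension by zero to $\widehat\om$, a $q$-form on $\widetilde\om$. First one checks $\widetilde H\in\Dqz(\widetilde\om)$: for every $\psi\in\Ciqpoc(\widetilde\om)$ the restriction $\psi|_{\om}$ has support a positive distance from $\ol\Gamma_{\tau}\subseteq\p\widetilde\om$, hence is an admissible test form for the weak $\Gamma_{\nu}$-boundary condition in the definition of $\cDqcn(\om)$, so that $\scp{\widetilde H}{\cd\psi}=\scpLtqom{H}{\cd(\psi|_{\om})}=-\scpLtqpoom{\ed H}{\psi|_{\om}}=0$ because $\ed H=0$. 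Since $\widetilde\om$ is topologically trivial and strong Lipschitz, Lemma~\ref{Dqzpot} produces a regular potential $E:=\T_{\ed}\widetilde H\in\Hoqmo(\rN)$ with $\ed E=\widetilde H$ in $\widetilde\om$; in particular $\ed E=0$ in $\widehat\om$. Applying Lemma~\ref{Dqzpot} once more, this time on $\widehat\om$, and then the Stein extension operator $\E$ from Lemma~\ref{steinexop}, one obtains $F$ with $\ed(\E F)=E$ in $\widehat\om$; consequently $\S_{\ed}^{q}H:=E-\ed(\E F)$ lies in $\Hoqmo(\rN)$, has a fixed compact support in $\rN$, and vanishes in $\widehat\om$. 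Vanishing in $\widehat\om$ forces a homogeneous trace along $\Gamma_{\nu}$, so $\S_{\ed}^{q}H\in\Hoqmocn(\om)$ by Lemma~\ref{Hoct}, while $\ed\S_{\ed}^{q}H=\ed E=\widetilde H=H$ in $\om$, and continuity is evident from the construction. The asserted chain of identities then follows exactly as in Lemma~\ref{satzD0L6}: $\ed\Hoqmocn(\om)\subseteq\ed\Dqmocn(\om)\subseteq\Dgenc{q}{\Gamma_{\nu},0}(\om)$ and $\ed\cDqmocn(\om)\subseteq\cDqczn(\om)\subseteq\ed\Hoqmocn(\om)$ by the potential just constructed, whence all the listed spaces coincide and are closed subspaces of $\Ltq(\om)$.

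\textbf{Part (ii).} Extendability of $(\om,\Gamma_{\nu})$ does not depend on the form rank, so (i) applied at rank $q+1$ supplies a continuous right inverse $\S_{\ed}^{q+1}$ of $\ed$ on $\Dgenc{q+1}{\Gamma_{\nu},0}(\om)$. For $E\in\Dqcn(\om)$ we have $\ed E\in\Dgenc{q+1}{\Gamma_{\nu},0}(\om)$, so $P:=\S_{\ed}^{q+1}\ed$ is a continuous operator on $\Dqcn(\om)$ with values in $\Hoqcn(\om)$ and with $\ed PE=\ed E$; hence $P^{2}=P$, i.e.\ $P$ is a continuous projection whose range $\S_{\ed}^{q+1}\ed\,\Dqcn(\om)$ is contained in $\Hoqcn(\om)$ and whose kernel is exactly $\Dgenc{q}{\Gamma_{\nu},0}(\om)$. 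Therefore $(1-P)E\in\Dgenc{q}{\Gamma_{\nu},0}(\om)=\cDqczn(\om)$, and applying $\S_{\ed}^{q}$ from (i) gives $\S_{\ed}^{q}(1-P)E\in\Hoqmocn(\om)$ with $\ed\S_{\ed}^{q}(1-P)E=(1-P)E$. Summing the two pieces yields $E=\S_{\ed}^{q+1}\ed E+\ed\S_{\ed}^{q}(1-\S_{\ed}^{q+1}\ed)E$ with first summand in $\Hoqcn(\om)$ and second in $\ed\Hoqmocn(\om)$; the operator identity $\S_{\ed}^{q+1}\ed+\ed\S_{\ed}^{q}(1-\S_{\ed}^{q+1}\ed)=\id|_{\Dqcn(\om)}$ and the directness of the sum are immediate because $P$ is a projection with the stated range and kernel, and $\ed\S_{\ed}^{q}(1-\S_{\ed}^{q+1}\ed)\Dqcn(\om)=\Dgenc{q}{\Gamma_{\nu},0}(\om)$ since $\ed\S_{\ed}^{q}(1-P)$ coincides with $1-P$ on $\Dqcn(\om)$ and $1-P$ maps $\Dqcn(\om)$ onto $\ker P=\Dgenc{q}{\Gamma_{\nu},0}(\om)$. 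All operators involved are continuous by composition.

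\textbf{Part (iii) and the main obstacle.} Part (iii) is deduced from (i) and (ii) by Hodge-$\star$-duality, exactly as Lemma~\ref{satzLtL6} is deduced from Lemma~\ref{satzD0L6}: extendability of $(\om,\Gamma_{\nu})$ is invariant under $\star$, and $\star$ intertwines $\ed$ with $\pm\cd$ and the tangential $\Gamma_{\nu}$-condition with the normal $\Gamma_{\nu}$-condition, so setting $\S_{\cd}^{q}:=\pm\star\,\S_{\ed}^{N-q}\,\star$ and transcribing the statements --- including the regular decompositions of $\Degenc{q}{\Gamma_{\nu}}(\om)$ with potential operators $\S_{\cd}^{q-1}\cd$ and $\cd\S_{\cd}^{q}(1-\S_{\cd}^{q-1}\cd)$ --- completes the proof. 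The one genuinely substantial step is the verification in (i) that the zero extension $\widetilde H$ remains $\ed$-closed across $\Gamma_{\nu}$ on all of $\widetilde\om$; this is precisely where the \emph{weak} nature of the boundary condition on $\Gamma_{\nu}$ is indispensable, and where one has to check that smooth compactly supported test forms on $\widetilde\om$ restrict to admissible test forms on $\om$. Everything past that point is bookkeeping with Lemmas~\ref{Dqzpot}, \ref{steinexop} and \ref{Hoct} and with the projection $P$.
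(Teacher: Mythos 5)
Your proposal is correct and follows essentially the same route as the paper: part (i) is the proof of Lemma \ref{satzD0L6} transplanted verbatim (zero extension across $\Gamma_{\nu}$, two applications of Lemma \ref{Dqzpot} on $\widetilde\om$ and $\widehat\om$, correction by $\ed(\E F)$, and Lemma \ref{Hoct}); part (ii) is the paper's argument phrased via the projection $\S_{\ed}^{q+1}\ed$, whose range/kernel splitting is exactly the paper's directness proof; and part (iii) is Hodge-$\star$-duality in both. The only point you make explicit that the paper leaves implicit is the topological triviality of $\widehat\om$ needed for the second application of Lemma \ref{Dqzpot}, which is a welcome (and correct) clarification.
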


\begin{proof}
For (i) we follow the proof of Lemma \ref{satzD0L6}.
To show (ii), we first note 
$\cDgenc{q}{\Gamma_{\nu}}(\om)=\Dqcn(\om)$ by Theorem \ref{theoweakeqstrong}.
Let $E\in\Dqcn(\om)$. Then $\ed E\in\Dgenc{q+1}{\Gamma_{\nu},0}(\om)$
and by (i) we see $\S_{\ed}^{q+1}\ed E\in\Hoqcn(\om)$ with $\ed(\S_{\ed}^{q+1}\ed E)=\ed E$.
Thus $E-\S_{\ed}^{q+1}\ed E\in\Dgenc{q}{\Gamma_{\nu},0}(\om)=\ed\Hoqmocn(\om)$ 
and $\S_{\ed}^{q}(E-\S_{\ed}^{q+1}\ed E)\in\Hoqmocn(\om)$
with $\ed\S_{\ed}^{q}(E-\S_{\ed}^{q+1}\ed E)=E-\S_{\ed}^{q+1}\ed E$
by (i), yielding
$$E=\S_{\ed}^{q+1}\ed E+\ed\S_{\ed}^{q}(1-\S_{\ed}^{q+1}\ed)E
\in\Hoqcn(\om)+\ed\Hoqmocn(\om),$$
which proves the regular decompositions and 
also the assertions about the regular potential operators.
To show the directness of the sums, let
$H=\S_{\ed}^{q+1}\ed E\in\Dgenc{q}{\Gamma_{\nu},0}(\om)$
with some $E\in\Dqcn(\om)$.
Then $0=\ed H=\ed E$ as $\ed E\in\Dgenc{q+1}{\Gamma_{\nu},0}(\om)$
and thus $H=0$.
\end{proof}

\begin{rem}[trivial Dirichlet-Neumann forms for extendable domains]
\label{regpotextedrem}
Let $\om\subset\rN$ and let $(\om,\Gamma_{\tau})$ be a 
bounded, topologically trivial, and extendable strong Lipschitz pair.
Then the Dirichlet-Neumann forms are trivial, i.e., $\harmdiqeps=\{0\}$,
which follows immediately by Theorem \ref{HZ} 
and, interchanging $\Gamma_{\tau}$ and $\Gamma_{\nu}$, Lemma \ref{regpotexted} (i) as
$\ed\Dqmoct(\om)
=\Dqczt(\om)
=\ed\Dqmoct(\om)\oplus_\eps\harmdiqeps$.
\end{rem}

Now, assume $(\om,\Gamma_{\tau})$ to be a bounded strong Lipschitz pair
and let us recall the partition of unity from Section \ref{mcpweaklip}.
After some possible adjustments, $U_{k}$ and $\chi_{k}$
can be chosen such that $(\om_{k},\widehat\Gamma_{\nu,k})$ 
is a bounded, topologically trivial, and extendable strong Lipschitz pair
for all $k=0,\dots,K$. Maybe $U_{0}$ 
has to be replaced by more neighbourhoods $U_{-L},\dots,U_{0}$ 
to ensure that all pairs $(\om_{k},\widehat\Gamma_{\nu,k})$, $k=-L,\dots,K$,
are topologically trivial. Note that for all ``inner'' indices $k=-L,\dots,0$
we have $\om_{k}=U_{k}$ as well as 
$\widehat\Gamma_{\nu,k}=\widehat\Gamma_{k}=\p\om_{k}=\p U_{k}$.
Then for $E\in\Dqcn(\om)$ we have
$\chi_{k}E\in\Dgenc{q}{\widehat\Gamma_{\nu,k}}(\om_{k})$
for all $k$ by Lemma \ref{kor320}.
Lemma \ref{regpotexted} (ii) shows the decomposition
\begin{align*}
\chi_{k}E=E_{k}+\ed H_{k}
\in\Hgenc{1,q}{\widehat\Gamma_{\nu,k}}(\om_{k})
+\ed\Hgenc{1,q-1}{\widehat\Gamma_{\nu,k}}(\om_{k})
\end{align*}
with potentials depending continuously on $\chi_{k}E$.
Extending $E_{k}$ and $H_{k}$ by zero to $\om$
yields $\widetilde{E}_{k}\in\Hoqcn(\om)$ 
and $\widetilde{H}_{k}\in\Hoqmocn(\om)$ and
$$E=\sum_{k}\chi_{k}E=\sum_{k}\widetilde{E}_{k}+\ed\sum_{k}\widetilde{H}_{k}
\in\Hoqcn(\om)+\ed\Hoqmocn(\om).$$
As all operations have been linear and continuous 
we obtain the regular decomposition and potential representation
\begin{align}
\label{regdecogenLip}
\Dqcn(\om)
=\Hoqcn(\om)+\ed\Hoqmocn(\om),\qquad
\ed\Dqcn(\om)
=\ed\Hoqcn(\om)
\end{align}
with linear and continuous potential operators
\begin{align*}
\P_{\ed}^{q}:\Dqcn(\om)&\to\Hoqcn(\om),
&
\S_{\ed}^{q+1}:\ed\Dqcn(\om)&\to\Hoqcn(\om),\\
\Q_{\ed}^{q}:\Dqcn(\om)&\to\Hoqmocn(\om).
\end{align*}
Note that by Theorem \ref{HZ}
\begin{align}
\label{hemldecospdeco}
\ed\Dqcn(\om)
=\Dgenc{q+1}{\Gamma_{\nu},0}{}(\om)\cap\qharmdi{q+1}{\eps}^{\perp_{\eps}},\qquad
\Dgenc{q+1}{\Gamma_{\nu},0}{}(\om)
=\ed\Dqcn(\om)\oplus_{\eps}\qharmdi{q+1}{\eps},
\end{align}
where here $\Gamma_{\tau}$ and $\Gamma_{\nu}$ are interchanged in the definition of
$$\qharmdi{q+1}{\eps}
:=\Dgenc{q+1}{\Gamma_{\nu},0}{}(\om)\cap\eps^{-1}\Degenc{q+1}{\Gamma_{\tau},0}{}(\om).$$

Let us summarise the results related to \eqref{regdecogenLip}.

\begin{theo}[regular potentials and decompositions for $\ed$ in strong Lipschitz domains]
\label{regpotextedtheo}
Let $\om\subset\rN$ and let $(\om,\Gamma_{\nu})$ be a 
bounded strong Lipschitz pair.
\begin{itemize}
\item[\bf(i)]
There exists a continuous linear operator
$$\S_{\ed}^{q}:\ed\Dgenc{q-1}{\Gamma_{\nu}}(\om)\rightarrow\Hoqmocn(\om),$$
such that $\ed\S_{\ed}^{q}=\id|_{\ed\Dgenc{q-1}{\Gamma_{\nu}}(\om)}$.
Especially 
$$\ed\Dqmocn(\om)
=\ed\Hoqmocn(\om)$$
and the regular $\Hoqmocn(\om)$-potential depends continuously on the data.
In particular, these spaces are closed subspaces of 
$\Ltq(\om)$ and $\S_{\ed}^{q}$ is a right inverse to $\ed$.
\item[\bf(ii)]
The regular decompositions
\begin{align*}
\Dqcn(\om)
&=\Hoqcn(\om)
+\ed\Hoqmocn(\om)
&
\Dgenc{q}{\Gamma_{\nu},0}(\om)
&=\ed\Hoqmocn(\om)
+\big(\Hoqcn(\om)\cap\Dgenc{q}{\Gamma_{\nu},0}(\om)\big)\\
&=\S_{\ed}^{q+1}\ed\Dqcn(\om)
\dotplus\Dgenc{q}{\Gamma_{\nu},0}(\om),
&
&=\ed\Hoqmocn(\om)
\oplus\harmdiq\\
&&
&=\ed\Hoqmocn(\om)
\oplus_{\eps}\harmdiqeps
\end{align*}
hold with linear and continuous regular decomposition resp. potential operators,
which can be defined explicitly by the orthonormal Helmholtz projectors
and the operators $\S_{\ed}^{q}$.
\end{itemize}
\end{theo}

\begin{proof}
(i) and the first regular decomposition of (ii)
together with the existence of the regular potential operators
are clear from the considerations leading to \eqref{regdecogenLip}.
Let $E\in\Dqcn(\om)$. As $\ed\S_{\ed}^{q+1}\ed E=\ed E$ by (i),
we have $E-\S_{\ed}^{q+1}\ed E\in\Dgenc{q}{\Gamma_{\nu},0}(\om)$,
showing the second regular decomposition of (ii).
As in the proof of Lemma \ref{regpotexted} the sum is direct.
Finally, (i) and \eqref{hemldecospdeco} complete the proof.
\end{proof}

\begin{rem}
Note that $\harmdiq$ is a subspace of smooth forms, i.e.,
$$\harmdiq
=\Dgenc{q}{\Gamma_{\nu},0}{}(\om)
\cap\Degenc{q}{\Gamma_{\tau},0}{}(\om)
\cap\Ciqom.$$
\end{rem}

Hodge-$\star$-duality yields the corresponding results for the co-derivative.

\begin{theo}[regular potentials and decompositions for $\cd$ in strong Lipschitz domains]
\label{regpotextcdtheo}
Let $\om\subset\rN$ and let $(\om,\Gamma_{\nu})$ be a 
bounded strong Lipschitz pair.
\begin{itemize}
\item[\bf(i)]
There exists a continuous linear operator
$$\S_{\cd}^{q}:\cd\Degenc{q+1}{\Gamma_{\nu}}(\om)\rightarrow\Hgenc{1,q+1}{\Gamma_{\nu}}{}(\om),$$
such that $\cd\S_{\cd}^{q}=\id|_{\cd\Degenc{q+1}{\Gamma_{\nu}}(\om)}$.
Especially 
$$\cd\Degenc{q+1}{\Gamma_{\nu}}(\om)
=\cd\Hgenc{1,q+1}{\Gamma_{\nu}}{}(\om)$$
and the regular $\Hgenc{1,q+1}{\Gamma_{\nu}}{}(\om)$-potential depends continuously on the data.
In particular, these spaces are closed subspaces of 
$\Ltq(\om)$ and $\S_{\cd}^{q}$ is a right inverse to $\cd$.
\item[\bf(ii)]
The regular decompositions
\begin{align*}
\Deqcn(\om)
&=\Hoqcn(\om)
+\cd\Hgenc{1,q+1}{\Gamma_{\nu}}{}(\om)
&
\Degenc{q}{\Gamma_{\nu},0}(\om)
&=\cd\Hgenc{1,q+1}{\Gamma_{\nu}}{}(\om)
+\big(\Hoqcn(\om)\cap\Degenc{q}{\Gamma_{\nu},0}(\om)\big)\\
&=\S_{\cd}^{q-1}\cd\Deqcn(\om)
\dotplus\Degenc{q}{\Gamma_{\nu},0}(\om),
&
&=\cd\Hgenc{1,q+1}{\Gamma_{\nu}}{}(\om)
\oplus\harmdiq\\
&&
&=\cd\Hgenc{1,q+1}{\Gamma_{\nu}}{}(\om)
\oplus\harmdiqeps
\end{align*}
hold with linear and continuous regular decomposition resp. potential operators,
which can be defined explicitly by the orthonormal Helmholtz projectors
and the operators $\S_{\cd}^{q}$.
\end{itemize}
\end{theo}

In the latter theorem for $\cd$
the Dirichlet-Neumann forms have again the usual boundary conditions
$$\harmdiqeps=\Dqczt(\om)\cap\eps^{-1}\Deqczn(\om).$$

For the case of no or full boundary conditions, related results
on regular potentials and regular decompositions are presented in 
\cite{costabelmcintoshbogopoinlipop}.


\bibliographystyle{plain} 
\bibliography{paule-michael}


\appendix
\section{Proof of Lemma \ref{lemtrafo} (Pull-Back Lemma for Lipschitz Transformations)}

We start out by proving the assertions for the exterior derivative.

\subsection{Without Boundary Conditions}
\label{appendixnoBC}

Let $E=\sum_{I}E_{I}\ed x^I \in\Dq(\Theta)$. 
We have to show $\psi^* E \in \Dq(\widetilde\Theta)$ with $\ed \psi^* E= \psi^* \ed E$.
\begin{itemize}
\item[\bf(i)]
Let us first consider $\Phi=\sum_{I}\Phi_{I}\ed x^I\in\Czoq(\Theta)$, i.e., 
$\Phi_{I}\in\Czo(\Theta)$ for all $I$. 
In the following we denote by $\widetilde\cdot$ the composition with $\psi$. We have
\begin{align*}
\ed\psi_{j}
&= \sum_{i}\partial_{i}\psi_{j}\ed x^i, 
&
\psi^* \Phi 
&= \sum_{I}\widetilde \Phi_{I}\psi^*\ed x^I 
= \sum_{I}\widetilde \Phi_{I}(\ed\psi_{i_{1}})\wedge\cdot\cdot\cdot\wedge(\ed\psi_{i_{q}}),\\
&&
\ed \Phi &=\sum_{I,j}\partial_{j}\Phi_{I}(\ed x_{j})\wedge(\ed x^I).
\end{align*}
By Rademacher's theorem $\widetilde \Phi_{I} = \Phi_{I}\circ\psi$ and $\psi_{j}$ 
belong to $\Czo(\widetilde\Theta)\subset\Ho(\widetilde\Theta)$
and the chain rule holds, i.e., 
$\partial_{i}\widetilde \Phi_{I}=\sum_{j}\widetilde{\partial_{j}\Phi_{I}}\partial_{i}\psi_{j}$.
As $\psi_{j}\in\Ho(\widetilde\Theta)$ 
we get $\ed\psi_{j}\in\DSobolev^1_{0}(\widetilde\Theta)$ by
$$\scp{\ed\psi_{j}}{\cd\varphi}_{\Lebesgue^{2,1}(\widetilde\Theta)}
=-\scp{\psi_{j}}{\cd\cd\varphi}_{\Lebesgue^{2,0}(\widetilde\Theta)}=0$$
for all $\varphi\in\Citc(\widetilde\Theta) $.
Thus by definition we see
\begin{align*}
	\ed\psi^* \Phi &=\sum_{I}(\ed \widetilde \Phi_{I})\wedge(\ed \psi_{i_{1}})\wedge\cdot\cdot\cdot\wedge (\ed\psi_{i_{q}}) 
	= \sum_{I,i}\partial_{i}\widetilde \Phi_{I}(\ed x^i)\wedge(\ed\psi_{i_{1}})\wedge\cdot\cdot\cdot\wedge (\ed\psi_{i_{q}})\\
	&=\sum_{I,i,j}\widetilde{\partial_{j} \Phi_{I}}\partial_{i}\psi_{j}(\ed x^i)\wedge(\ed\psi_{i_{1}})\wedge\cdot\cdot\cdot\wedge (\ed\psi_{i_{q}})
	=\sum_{I,j}\widetilde{\partial_{j} \Phi_{I}}(\ed\psi_{j})\wedge(\ed\psi_{i_{1}})\wedge\cdot\cdot\cdot\wedge (\ed\psi_{i_{q}}).
\end{align*}
On the other hand it holds
$$
\psi^*\ed \Phi = \sum_{I,j}\widetilde{\partial_{j} \Phi_{I}}(\psi^*\ed x_{j})\wedge(\psi^*\ed x^I)=\sum_{I,j}\widetilde{\partial_{j} \Phi_{I}}
(\ed\psi_{j})\wedge(\ed\psi_{i_{1}})\wedge\cdot\cdot\cdot\wedge (\ed\psi_{i_{q}}).
$$
Therefore, $\psi^*\Phi\in\Dq(\widetilde\Theta)$ and $\ed\psi^* \Phi = \psi^*\ed \Phi$. 
\item[\bf(ii)]
For general $E\in\Dq(\Theta)$ we pick $\Phi\in\Ciqpoc(\widetilde\Theta)$. 
Note $\supp\Phi\subset\subset\widetilde\Theta=\phi(\Theta)$. 
Replacing $\psi$ by $\phi$ in (i) we have $\phi^*\star\Phi\in\DNmqmo(\Theta)$ 
with $\ed\phi^*\star\Phi = \phi^*\ed\star\Phi$ and,
since $\phi^*\star\Phi = \sum_{I}\widetilde{(\star\Phi)_{I}}\phi^*\ed x^I$ holds, 
$\supp\phi^*\star\Phi\subset\subset\Theta$.
By standard mollification we obtain a sequence $(\Psi_{n})\subset\CiNmqmoc(\Theta)$ with $\Psi_{n}\rightarrow\phi^*\star\Phi$ in
$\DNmqmo(\Theta)$. Furthermore $\star\Psi_{n}\in\Ciqpoc(\Theta)$. Then
\begin{align*}
\scp{\psi^*E}{\cd\Phi}_{\Ltq(\widetilde\Theta)}
&=\int_{\widetilde\Theta} \psi^*E\wedge\star\cd\Phi 
	= \pm\int_{\widetilde\Theta}\psi^*E\wedge\psi^*\phi^*\ed\star\Phi 
	= \pm \int_{\widetilde\Theta}\psi^*(E\wedge\phi^*\ed\star\Phi)\\
	&= \pm\int_{\Theta} E\wedge\phi^* \ed\star\Phi 
	=\pm\int_{\Theta} E\wedge\ed\phi^*\star\Phi
	\leftarrow \pm\int_{\Theta} E\wedge\ed\Psi_{n}\\
	&=\pm\int_{\Theta} E\wedge\star\star\ed\star\star\Psi_{n}
	=\pm\scp{E}{\cd\star\Psi_{n}}_{\Ltq(\Theta)}\\
	&=\pm\scp{\ed E}{\star\Psi_{n}}_{\Ltqpo(\Theta)}
	\rightarrow\pm\scp{\ed E}{\star\phi^*\star\Phi}_{\Ltqpo(\Theta)}
	=\pm\int_{\Theta}\ed E\wedge\phi^*\star\Phi\\
	&=\pm\int_{\widetilde\Theta}\psi^*(\ed E\wedge\phi^*\star\Phi)
	=\pm\int_{\widetilde\Theta}(\psi^*\ed E)\wedge\star\Phi
	=-\scp{\psi^*\ed E}{\Phi}_{\Ltqpo(\widetilde\Theta)}
\end{align*}
and hence $\psi^* E\in\Dq(\widetilde\Theta)$ with $\ed\psi^*E = \psi^*\ed E$.
\item[\bf(iii)]
Let $E\in\Dq(\Theta)$. By (ii) we know $\psi^* E\in\Dq(\widetilde\Theta)$ with $\ed\psi^*E = \psi^*\ed E$. Hence
\begin{align*}
\norm{\psi^* E}^2_{\Ltq(\widetilde\Theta)}  
&=\int_{\widetilde\Theta}\psi^* E\wedge \star \psi^* E
=\int_{\Theta}\phi^*\psi^* E\wedge \phi^*\star \psi^* E\\
&=\pm\int_{\Theta} E\wedge \star(\star\phi^*\star\psi^*) E
\leq c\norm{E}^2_{\Ltq(\Theta)}
\end{align*}
and
\begin{align*}
\norm{\ed\psi^* E}_{\Ltqpo(\widetilde\Theta)} = \norm{\psi^*\ed E}_{\Ltqpo(\widetilde\Theta)}\leq c\norm{\ed E}_{\Ltqpo(\Theta)}.
\end{align*}
\end{itemize}
\subsection{With Strong Boundary Condition.}
\label{appendixsBC}
Let $E\in\Dgenc{q}{\Upsilon_0}(\Theta)$ and $(E_{n})\subset\Cgenc{\infty,q}_{\Upsilon_0}(\Theta)$ with $E_{n}\rightarrow E$ in $\Dq(\Theta)$.
By Appendix \ref{appendixnoBC} (ii) we know 
$\psi^*E_{n},\psi^*E\in\Dq(\widetilde\Theta)$ with $\ed\psi^*E_{n}=\psi^*\ed E_{n}$ 
as well as $\ed\psi^*E = \psi^*\ed E$.
Furthermore, $\psi^* E_{n}$ has compact support away from $\widetilde\Upsilon_{0}$.
By standard mollification we see $\psi^*E_{n}\in\Dgenc{q}{\widetilde\Upsilon_0}(\widetilde\Theta)$. Moreover, by \ref{appendixnoBC} (iii) $\psi^*E_{n} \rightarrow\psi^*E$ in $\Dq(\widetilde\Theta)$.
Therefore $\psi^* E\in\Dgenc{q}{\widetilde\Upsilon_0}(\widetilde\Theta)$ with $\ed\psi^* E = \psi^*\ed E$.

\subsection{With Weak Boundary Condition}
\label{appendixwBC}

Let $E\in\cDgenc{q}{\Upsilon_0}(\Theta)$ and $\Phi\in\Cgenc{\infty,q+1}_{\widetilde\Upsilon_1}(\widetilde\Theta)$,
where $\Upsilon_{1} = \Upsilon\setminus\ol\Upsilon_{0}$.
By Appendix \ref{appendixnoBC} (ii) we again know $\psi^*E\in\Dq(\widetilde\Theta)$ with $\ed\psi^*E=\psi^*\ed E$. Moreover by Appendix \ref{appendixsBC}
$\phi^*\star\Phi\in\Dgenc{N-q-1}{\Upsilon_{1}}(\Theta)$ and hence $\star\phi^*\star\Phi\in\Degenc{q+1}{\Upsilon_{1}}(\Theta)$. 
We repeat the calculation from Appendix \ref{appendixnoBC} (ii) to arrive at
\begin{align*}
\scp{\psi^*E}{\cd\Phi}_{\Ltq(\widetilde\Theta)} &=\int_{\widetilde\Theta} \psi^*E\wedge\star\cd\Phi = \pm\scp{E}{\star\phi^*\ed\star\Phi}_{\Ltq(\Theta)}\\
&=\pm\scp{E}{\star\ed\phi^*\star\Phi}_{\Ltq(\Theta)}=\pm\scp{E}{\cd\star\phi^*\star\Phi}_{\Ltq(\Theta)}\\
&=\pm\scp{\ed E}{\star\phi^*\star\Phi}_{\Ltqpo(\Theta)} = -\scp{\psi^*\ed E}{\Phi}_{\Ltqpo(\widetilde\Theta)} =-\scp{\ed\psi^*E}{\Phi}_{\Ltqpo(\widetilde\Theta)}
\end{align*}
and therefore $\psi^* E\in\cDgenc{q}{\widetilde\Upsilon_0}(\widetilde\Theta)$.

\subsection{Assertions for the Co-Derivative}
\label{appendixcoderi}

It holds by Appendix \ref{appendixnoBC} (ii)
\begin{align*}
\eps H\in\Deq(\Theta) \qequi \star\eps H\in\Dgen{N-q}{}{}(\Theta) \qequi \psi^*\star\eps \phi^*\psi^* H\in\Dgen{N-q}{}{}(\widetilde\Theta) \qequi \mu\psi^* H\in\Deq(\widetilde\Theta). 
\end{align*}
Moreover, using Appendix \ref{appendixnoBC} (iii) $\mu$ is admissible since for all $H\in\Ltq(\widetilde\Theta)$
\begin{align*}
\scp{\mu H}{H}_{\Ltq(\widetilde\Theta)} &= \pm\scp{\star\psi^*\star\eps\phi^* H}{H}_{\Ltq(\widetilde\Theta)}
=\pm\scp{\psi^*\star\eps\phi^* H}{\star H}_{\LtNmq(\widetilde\Theta)}\\
&=\pm\int_{\widetilde\Theta}\psi^*\star\eps\phi^* H \wedge H
=\pm\int_{\Theta} \star\eps\phi^* H \wedge \star\star\phi^* H\\
&=\pm\scp{\eps\phi^* H}{\phi^* H}_{\Ltq(\Theta)}
\geq c\norm{\phi^* H}^2_{\Ltq(\Theta)} \geq c\norm{H}^2_{\Ltq(\widetilde\Theta)}.
\end{align*}
Furthermore
\begin{align*}
\cd\mu\psi^* H&=\pm\star\ed\psi^*\star\eps H = \pm\star\psi^*\star\cd\eps H.
\end{align*}
The remaining assertions now follow by Appendix \ref{appendixnoBC}-\ref{appendixwBC} 
and Hodge-$\star$-duality.


\end{document}